\def\ga{{G_0}}  % was g_0
\def\gb{{G_1}}  % was g_1
\def\gc{{G_2}}  % was g_2
\title{A  Diophantine Frobenius problem \\ related to Riemann surfaces}
\author{Cormac O'Sullivan and Anthony Weaver}
\date{December 12, 2009}
\begin{document}

\maketitle

\newtheorem{theorem}{Theorem}[section]
\newtheorem{lemma}[theorem]{Lemma}
\newtheorem{prop}[theorem]{Proposition}
\newtheorem{cor}[theorem]{Corollary}
\newtheorem{conj}{Conjecture}
\newtheorem{rem}{Remark}

\renewcommand{\labelenumi}{(\roman{enumi})}
\def\fr{{\frak f}}

\numberwithin{equation}{section}

\bibliographystyle{plain}
\begin{abstract}   We obtain  sharp upper and lower bounds on a certain four-dimensional Frobenius number determined by a prime pair $(p,q)$, $2<p<q$,  including  exact formulae for two infinite subclasses of such pairs.  Our work is motivated by the study of compact Riemann surfaces which can be realized as a semi-regular $pq$-fold coverings of surfaces of lower genus.  In this context,  the Frobenius number is (up to an additive translation) the largest genus in which no surface is such a covering.   In many cases it is also the largest genus in which no surface admits an automorphism of order $pq$.    The general $t$-dimensional Frobenius problem ($t \geq 3$) is $NP$-hard, and it  may be that our restricted problem retains this property.
\footnote{
{\bf 2000 Mathematics Subject Classification: }Primary 14J50, 11D04}
\end{abstract}

\section{Introduction}\label{S:intro}

A set of  integers $\{a_1, a_2, \dots a_t\}$,  $t \geq 2$, with $a_i > 1$ and gcd $=1$, has a {\it Frobenius number}
$$g(\{a_1, a_2, \dots, a_t\}),$$
  which is the largest positive integer not representable in the form $k_1a_1 + k_2a_2 + \dots + k_ta_t$,  where each $k_i$ is a nonnegative integer.  It is a simple exercise to show that $g(\{a_1, a_2, \dots, a_t\})$ exists under the stated conditions.  Finding $g(\{a_1, \dots, a_t\})$ for a given set $\{a_1, \dots, a_t\}$ is the  linear Diophantine problem of Frobenius \cite{JLRA05}.    In 1884, J.J. Sylvester established the formula
\begin{equation}\label{E:sylvester}
g(\{a_1,a_2\})= a_1a_2 - a_1 -a_2
\end{equation}
 for the {\it two-dimensional} Frobenius number  \cite{JJS84}.   In 1990, it was shown by F. Curtis \cite{FC90} that, for $t\geq 3$,  there is no finite set of polynomials $\{f_1, \dots, f_k\}$ in $t$ variables such that, for each $t$-tuple $\{a_1, a_2,  \dots, a_t\}$ with greatest common divisor $1$,  $g(\{a_1, a_2,  \dots, a_t\}) = f_i(a_1, a_2, \dots, a_t)$ for some $i$.    Algorithms for computing $t$-dimensional Frobenius numbers  exist  \cite{JLRA05}, but the problem (for variable $t\geq 3$)  is $NP$-hard \cite{JLRA96}.

Throughout the paper, $p$, $q$ will be primes satisfying $2 < p <q$ with   $p'$, $q'$ denoting the integers $(p-1)/2$ and $(q-1)/2$, respectively.  The four integers
\begin{equation}\label{E:d's}
d_0=pq,\quad d_1=p'q, \quad d_2=pq', \quad d_3=(pq-1)/2,
\end{equation}
have gcd $=1$, so they determine a four-dimensional Frobenius number
\begin{equation}\label{E:defg}
g_{pq}= g(\{d_0, d_1, d_2, d_3\}).
\end{equation}
The significance of the number $g_{pq}$ in \eqref{E:defg} is that
$$
g_{pq} -pq+1
$$
is the largest  integer  such that no compact Riemann surface of that genus  is a semi-regular $pq$-fold cover  of some other surface. This is explained in Section~\ref{S:motivation}. A closely related quantity of interest to us is $\nu_{pq}$, the largest  integer  such that no compact Riemann surface of that genus has an automorphism group that is cyclic of order $pq$.   $\nu_{pq}$ is called the {\it largest non-genus}  of the group $\Bbb Z_{pq}$. As a special case of Theorem \ref{gnnn} we have
\begin{equation}\label{gpqbound}
g_{pq} -pq +1 \leq \nu_{pq} \leq g_{pq}.
\end{equation}
Our main results, listed in the next section, yield bounds for $g_{pq}$. When $q$ is sufficiently large with respect to $p$ we obtain exact formulas for $g_{pq}$ as well as $\nu_{pq}$.   At the other extreme we also give exact formulas for $g_{pq}$, $\nu_{pq}$ when $q=p+2$.

More generally, as we describe in Section~\ref{S:motivation}, there is a Frobenius number $g_n$ so that $g_n-n+1$ is the largest possible genus for a compact Riemann surface that is not a semi-regular $n$-fold cover  of another surface. For square-free odd $n$ with $s>2$ prime factors, this will correspond to a more difficult $2^s$-dimensional Frobenius problem. $\nu_{n}$, the largest non-genus for the cyclic group $\Bbb Z_{n}$,  has been found in the case of $n=p^e$ for $p$ prime by Kulkarni and Maclachlan in \cite{KM91}. Kulkarni in \cite{K87} showed that, for an arbitrary finite group $G$, the genera where it is possible for a surface to admit $G$ as an automorphism group form an arithmetic progression. He showed that there also exists a largest non-genus in this progression. These genera are studied with generating functions in \cite{MM98}.

\section{The  main results}\label{S:results}

Define the function
\begin{equation}\label{E:definefracpq}
\fr_{p,q}(x,y,z,w) = xd_0 + yd_1 + zd_2 + wd_3,
\end{equation}
where the integers $d_i$ are defined at \eqref{E:d's}.   A  positive integer $n$ is {\it representable} if $n = \fr_{p,q}(x,y,z,w)$ for $x,y,z,w$ nonnegative.   The Frobenius number \eqref{E:defg} is the largest  non-representable integer.    Since $p$ and $q$ are fixed in all our arguments,  we henceforth put $\fr = \fr_{p,q}$, suppressing the subscripts and write $g$ for $g_{pq}$.

We define integers $\kappa, \kappa', \lambda, \lambda'$ as follows:
\begin{alignat}{5}
q&=\kappa p + \lambda, &\qquad & 1 &\leq \lambda &\leq p-1\label{E:defs1} \\
q' &= \kappa' p' + \lambda ',&\qquad &  0 & \leq \lambda'& \leq p'-1. \label{E:defs2}
\end{alignat}
The  integers
\begin{equation}\label{E:keynums}
\ga \equiv \fr(p'-1, p-1, \kappa, -1), \qquad
 \gb \equiv   \ga-\lambda d_3, \qquad
 \gc \equiv   \ga-(p-3) d_3
\end{equation}
play an important role.

\begin{theorem}\label{T1:Main}  The Frobenius number $g$ satisfies
\begin{enumerate}
\item $\gc \leq g \leq \ga$
\item $g = \ga$ if and only if $\kappa + \lambda \geq p$
\item $g=\gc$ if $p=3$ or if $(p,q)$ is a twin prime pair.
\end{enumerate}
\end{theorem}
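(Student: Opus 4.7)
The plan is to parametrize representability modulo $p$ and $q$. Writing $n = xd_0 + yd_1 + zd_2 + wd_3$ and doubling, the identities $2d_1 = (p-1)q$, $2d_2 = p(q-1)$, and $2d_3 = pq-1$ yield that $n$ is representable if and only if there are $y, z, w \geq 0$ with
\begin{align*}
yq + w &\equiv -2n \pmod{p}, \\
zp + w &\equiv -2n \pmod{q}, \\
yd_1 + zd_2 + wd_3 &\leq n;
\end{align*}
the last inequality encodes $x \geq 0$, while the first two congruences (together with division by $2$, which is invertible modulo $pq$) guarantee that $d_0$ divides $n - yd_1 - zd_2 - wd_3$. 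For each $w$, the congruences determine $y$ modulo $p$ and $z$ modulo $q$, so within each residue class of $n$ modulo $d_0$ we seek the smallest value of $yd_1 + zd_2 + wd_3$ over admissible triples with $y \in \{0,\ldots,p-1\}$, $z \in \{0,\ldots,q-1\}$, and $w \geq 0$. The Frobenius number is then the maximum over nonzero classes $c$ modulo $d_0$ of (smallest representative in class $c$) minus $d_0$. The auxiliary identities $d_3 = d_1 + q'$ and $d_3 = d_2 + p'$ quantify the cost of trading a copy of $d_1$ or $d_2$ for $d_3$, and will be used throughout.

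For the upper bound $g \leq \ga$ in (i), given $n > \ga$ I would produce an explicit representation. The identity $\ga + d_3 = (p'-1)d_0 + (p-1)d_1 + \kappa d_2$ anchors the argument, and for $n$ in nearby residue classes modulo $d_0$ small perturbations of $(y, z, w)$ realize $n$. For (ii), to see that $\ga$ is non-representable when $\kappa + \lambda \geq p$: compute $-2\ga$ modulo $p$ and modulo $q$ in terms of $\kappa$ and $\lambda$; this pins $y$ and $z$ down as functions of $w$, and the hypothesis $\kappa + \lambda \geq p$ is precisely what forces $yd_1 + zd_2 + wd_3 > \ga$ for every $w \geq 0$. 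Conversely, when $\kappa + \lambda < p$, one exhibits a representation of $\ga$ using a positive value of $w$ together with compensating shifts in $y$ and $z$.

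For the lower bound $\gc \leq g$ in (i), the same modular analysis applied to $\gc$ in place of $\ga$ shows that every admissible $(y, z, w)$ satisfies $yd_1 + zd_2 + wd_3 > \gc$. For part (iii), the case $p = 3$ is immediate: here $\gc = \ga - (p-3)d_3 = \ga$, and $q = 3\kappa + \lambda \geq 5$ forces $\kappa + \lambda \geq 3 = p$, so (ii) yields $g = \ga = \gc$. For twin primes $q = p + 2$ with $p \geq 5$, we have $\kappa = 1$ and $\lambda = 2$, hence $\kappa + \lambda = 3 < p$; (ii) only gives $g < \ga$, and to upgrade to $g = \gc$ I would verify that every $n$ with $\gc < n < \ga$ is representable by an explicit construction that exploits $\kappa = 1$. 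The principal obstacle throughout is the delicate case analysis needed to show that the size inequality $yd_1 + zd_2 + wd_3 \leq n$ fails for every admissible $(y, z, w)$ in exactly the claimed cases, and this is where the hypotheses $\kappa + \lambda \geq p$ and $q = p + 2$ must be used most carefully.
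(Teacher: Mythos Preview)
Your congruence reformulation is correct and is in fact a cleaner way to organize the problem than what the paper does. The paper never writes down the pair of congruences modulo $p$ and $q$; instead it works with an explicit $\mathbb Z$-basis $e_0,e_1,e_2$ of the kernel of $\fr$, parametrizes $t\in[0,d_1-1]$ via the division algorithm $t=aq'+bp'+c$, and then (for the upper bound) pushes each resulting quadruple into the nonnegative orthant by adding suitable combinations of the $e_i$. For non-representability of $\ga$ and $\gc$ the paper uses a decomposition $\fr(x,y,z,w)=q[xp+(y+w)p']+q'[zp+w]$ together with coprimality of $q,q'$ and of $p,p'$, which is morally your Chinese-remainder step in different clothes. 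So the two approaches are cousins, but yours makes the Ap\'ery-set structure explicit, whereas the paper's is more ad hoc.

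That said, what you have written is a plan, not a proof. Everything you identify as ``the principal obstacle'' is exactly where the work lies, and the paper spends three separate lemmas (handling the cases $s<\kappa'-\kappa-1$, $s=\kappa'-\kappa-1$, $s=\kappa'-\kappa$) just to get the upper bound $g\le\ga$, another full proposition for the non-representability of $\gc$, and a separate argument for twin primes. Your sentence ``for $n$ in nearby residue classes modulo $d_0$ small perturbations of $(y,z,w)$ realize $n$'' hides all of this: the perturbations are not small in any uniform sense, and finding the right one in each case is the entire content. Similarly, ``the hypothesis $\kappa+\lambda\ge p$ is precisely what forces $yd_1+zd_2+wd_3>\ga$ for every $w\ge 0$'' is the statement to be proved, not an argument. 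To turn your outline into a proof you would need, for each $w\ge 0$, to compute the least nonnegative $y\in\{0,\dots,p-1\}$ and $z\in\{0,\dots,q-1\}$ from the congruences and then bound $yd_1+zd_2+wd_3$ from below; this computation is where $\kappa,\lambda,\kappa',\lambda'$ enter and where the case splits become unavoidable.
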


We note that if $p=3$ then $\gc=\ga$ and $\kappa + \lambda \geq 3$, so that Theorem~\ref{T1:Main} parts (i) and (ii) each imply that $g=\ga$ (and hence also $g=\gc$, as in part (iii)).
 For $p>3$,  the integer $q = (p-3)p + 1$, if prime, is the largest  such that $\kappa + \lambda < p$.  Hence we obtain an easy corollary.

 \begin{cor}  If $q \geq  (p-3)p + 3$, $g=\ga$.
 \end{cor}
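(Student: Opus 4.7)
The plan is to invoke Theorem~\ref{T1:Main}(ii), which states $g = \ga$ if and only if $\kappa + \lambda \geq p$. This reduces the corollary to a purely arithmetic claim about the division with remainder $q = \kappa p + \lambda$: under the standing hypothesis that $q$ is a prime with $q > p$, I need to show that $q \geq (p-3)p + 3$ forces $\kappa + \lambda \geq p$. The case $p = 3$ is immediate from the remark following Theorem~\ref{T1:Main}, since the hypothesis $q \geq 3$ is automatic and the remark already gives $g = \ga$, so I would assume $p > 3$ throughout.

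I would then argue by contradiction: suppose $\kappa + \lambda < p$, i.e.\ $\lambda \leq p - 1 - \kappa$. Since $\lambda \geq 1$ this forces $\kappa \leq p - 2$, and the bound on $\lambda$ gives
\[
q = \kappa p + \lambda \leq \kappa p + (p - 1 - \kappa) = (\kappa+1)(p-1).
\]
If $\kappa \leq p - 3$, then $q \leq (p-2)(p-1) = p^2 - 3p + 2$, which is strictly less than the hypothesised bound $(p-3)p + 3 = p^2 - 3p + 3$, a contradiction. The only remaining possibility is $\kappa = p - 2$, which forces $\lambda = 1$ and therefore $q = (p-2)p + 1 = (p-1)^2$.

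The sole step requiring anything beyond mechanical bookkeeping is the disposal of this last case: since $(p-1)^2 - \bigl[(p-3)p + 3\bigr] = p - 2 > 0$, the value $q = (p-1)^2$ does satisfy the size hypothesis and cannot be excluded by inequalities alone. However, $(p-1)^2$ is a perfect square, hence composite for $p > 2$, contradicting the primality of $q$. Thus no prime $q \geq (p-3)p + 3$ can satisfy $\kappa + \lambda < p$, and Theorem~\ref{T1:Main}(ii) yields $g = \ga$. The only real ``obstacle'' is noticing that the primality of $q$ is needed at precisely one point — namely, to eliminate the borderline value $(p-1)^2$ — which is also exactly the observation recorded in the sentence preceding the corollary.
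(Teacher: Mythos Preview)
Your argument is correct and follows exactly the paper's approach: the paper's entire proof is the one-sentence observation preceding the corollary (that $(p-3)p+1$, if prime, is the largest $q$ with $\kappa+\lambda<p$), and you have supplied the detailed verification of that assertion, including the key observation that the borderline values $(p-1)(p-2)$ and $(p-1)^2$ are composite.
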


When $\kappa + \lambda < p$, by Theorem~\ref{T1:Main}, $\gc \leq g<\ga$.  These bounds can be tightened in some cases.  To treat these cases, we introduce some more notation.

Note that $\kappa$ and $\lambda$ have opposite parity (otherwise $q$ is not prime), and that $\kappa'\geq \kappa$.     If $\kappa + \lambda < p$, then, in fact, $\kappa + \lambda \leq p-2$ and hence $\lambda \leq p-3$.  It follows that there is a unique nonnegative integer $\tau < \lambda$  such that
\begin{equation}\label{E:deftau}
\frac{\tau+2}{\tau+1} < \frac{p}{\lambda} < \frac{\tau+1}{\tau}.
\end{equation}
We allow $\tau=0$ so as to include the cases in which $2 < \frac{p}{\lambda}$.  (It is also easy to see that $\tau=\lfloor \lambda/(p-\lambda)\rfloor$.) Every pair $(p,q)$ with $\kappa + \lambda < p$ belongs to one of two types:
\begin{equation}\label{E:typesdef}
\text{Type I:}\quad \frac{\tau+2}{\tau+1} <\frac{p'}{\lambda'}\qquad\quad \text{Type II:}\quad\frac{p'}{\lambda'} \leq  \frac{\tau+2}{\tau+1}.
\end{equation}

\begin{theorem}\label{T4:Main}  For a Type II pair, the Frobenius number $g$ satisfies
\begin{enumerate}
\item $\gb \leq g < \ga$
\item $g=\gb$
if and only if  $\kappa + \lambda \leq p-\lambda$.
\end{enumerate}
\end{theorem}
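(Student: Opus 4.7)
The upper bound $g < \ga$ is immediate from Theorem~\ref{T1:Main}(ii), since every Type~II pair satisfies $\kappa + \lambda < p$. So the substantive content of (i) is to prove that $\gb$ is itself non-representable, which then gives $\gb \leq g$.

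My plan for the non-representability of $\gb$ is to argue by contradiction. Suppose $\gb = x d_0 + y d_1 + z d_2 + w d_3$ with $x,y,z,w \geq 0$. Doubling and using $2d_0=2pq$, $2d_1=(p-1)q$, $2d_2=p(q-1)$, $2d_3=pq-1$ gives
$$2\gb \;=\; pq(2x+y+z+w) \;-\; yq \;-\; zp \;-\; w,$$
so reduction modulo $p$ (with $q\equiv\lambda$) and modulo $q$ yields two congruences that pin down $y\lambda + w \pmod p$ and $zp + w \pmod q$. On the other hand, the natural expression $\gb = (p'-1)d_0 + (p-1)d_1 + \kappa d_2 - (\lambda+1)d_3$ lets one compute the same residues directly. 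Matching these, together with the size inequalities $yd_1, zd_2, wd_3 \leq \gb$, restricts the quadruple $(x,y,z,w)$ to a short list of candidates. The Type~II hypothesis $p'/\lambda' \leq (\tau+2)/(\tau+1)$ controls how small $\lambda'$ can be relative to $p'$ and, through the identity $q' = \kappa' p' + \lambda'$, constrains the way $z$ and $w$ can cooperate to build $\gb$. Ruling out each remaining candidate is the main obstacle and the technical heart of the proof; I expect it will parallel (but refine) the analogous contradiction in the proof of Theorem~\ref{T1:Main}(ii).

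For part~(ii), the forward direction is the easier half: if $\kappa + \lambda \leq p - \lambda$, I would show every integer $n > \gb$ is representable. Integers $n > \ga$ are representable by the implicit content of Theorem~\ref{T1:Main}, so one only needs to cover $\gb < n \leq \ga$. Writing $n = \gb + j$ with $1 \leq j \leq \lambda d_3$, I would exhibit explicit non-negative representations, splitting on the residue of $j$ and using the slack $\kappa + \lambda \leq p - \lambda$ to guarantee that the coefficients of $d_1$ and $d_2$ (starting from $p-1$ and $\kappa$ in $\ga$) remain non-negative after the required adjustments. For the converse, assuming $\kappa + \lambda > p - \lambda$, I would produce an explicit non-representable integer strictly greater than $\gb$ — the natural candidate is $\gb + j d_3$ for a suitable $1 \leq j \leq \lambda-1$ (equivalently $\ga - j' d_3$ for small $j'$) — and prove non-representability by the same modular-plus-size analysis as in the previous paragraph, now using the inequality $\kappa + \lambda > p - \lambda$ to close off every candidate representation.
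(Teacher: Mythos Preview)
Your overall architecture matches the paper's: show $\gb$ is non-representable (giving $\gb\le g$), show everything above $\gb$ is representable when $\kappa+\lambda\le p-\lambda$ (giving $g=\gb$), and exhibit a non-representable integer strictly above $\gb$ when $\kappa+\lambda>p-\lambda$ (giving $g>\gb$). The upper bound $g<\ga$ is indeed immediate from Theorem~\ref{T1:Main}(ii).

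Two substantive differences from the paper are worth flagging. First, your congruence-and-size approach to non-representability is in spirit the same as the paper's, but the paper packages it more cleanly: Proposition~\ref{P:system} (writing $\fr$ as $q[\,\cdot\,]+q'[\,\cdot\,]$ and applying Lemma~\ref{L:rigid} twice) yields a \emph{complete two-parameter family} \eqref{E:g1rep} of all integer quadruples representing $\gb$, indexed by integers $t\ge 0$ and $0\le l\le p-1$. The Type~II hypothesis then kills every $(t,l)$ via the squeeze $\frac{t+2}{t+1}\le\frac{l}{t+1}<\frac{p'}{\lambda'}$ and $\frac{p}{\lambda}<\frac{t+1}{t}$, forcing the contradictory pair $t>\tau$ and $t\le\tau$. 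Your ``short list of candidates'' from congruences would eventually reproduce this, but the parametrized form makes the role of $\tau$ and the Type~II inequality transparent.

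Second --- and this is a genuine gap --- your candidate witness for the converse is wrong. The paper does \emph{not} use an integer of the form $\gb+jd_3$; it uses $\gb+\lambda'$ (Proposition~\ref{P:4}), where $\lambda'=(\kappa+\lambda-1)/2$ is far smaller than $d_3$. The point is that $\gb+\lambda' = \gb + \lambda'\cdot\fr(1,0,0,-2)$, so its general representation is obtained from \eqref{E:g1rep} by shifting the first and fourth coordinates, and the resulting inequalities reduce to the \emph{same} squeeze as for $\gb$ itself (with $l$ replaced by $l-1$), yielding the same contradiction. There is no reason to expect $\ga-j'd_3$ to be non-representable for small $j'$; the hypothesis $\kappa+\lambda>p-\lambda$ enters only through the fact that $\gb+\lambda'\le\ga$ is not already covered, not through any special structure of multiples of $d_3$.

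For the forward direction of (ii), the paper covers the interval $[\gb+1,\gb+d_1]$ (not $(\gb,\ga]$, which is typically much longer) by literally subtracting $\lambda$ from the fourth coordinate of each quadruple built in Section~\ref{S:representability>g0} and repairing the few cases where this goes negative using the vector $e(0,\kappa'-1)$ from \eqref{E:mainequiv}; the inequality $\kappa+\lambda\le p-\lambda$ is exactly what makes the repairs succeed.
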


\begin{theorem}\label{T5:Main}  For a Type I pair with $\kappa + \lambda \leq p-\lambda$, the Frobenius number $g$ satisfies
\begin{enumerate}
\item    $\gc \leq g < \gb$
\item $g=\gc$ if $(p,q)$ is a twin prime pair.
\end{enumerate}
\end{theorem}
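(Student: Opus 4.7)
The lower bound $\gc \leq g$ in part (i) is inherited directly from Theorem~\ref{T1:Main}(i). For the upper bound $g < \gb$, observe that Theorem~\ref{T1:Main}(i) already provides $g \leq \ga$, so it suffices to show that every integer $n$ with $\gb \leq n \leq \ga$ can be written in the form $\fr(x,y,z,w)$ with $x,y,z,w \geq 0$. The starting point is the quasi-representation
\[
\gb = \fr(p'-1,\, p-1,\, \kappa,\, -1-\lambda),
\]
in which only the $d_3$-coefficient is negative. To repair this, I would use the two syzygies
\[
(p-1)d_0 + d_2 = p\,d_1 + p\,d_3, \qquad (q-1)d_0 + d_1 = q\,d_2 + q\,d_3,
\]
each of which is easily verified from the definitions \eqref{E:d's}. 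Applying the first (with appropriate sign) trades one $d_3$ for $p$ units in the $d_1$-direction at the cost of one $d_2$ and $p-1$ in $d_0$; the second does the analogous trade using $d_2$ and $d_1$. The Type~I hypothesis $\frac{p'}{\lambda'} > \frac{\tau+2}{\tau+1}$ is precisely the amount of slack in the $d_1$- and $d_2$-coefficients needed for such trades to succeed without producing new negative coordinates. An analogous adjustment handles each larger integer $n \in [\gb,\ga]$ by starting from a quasi-representation $\fr(p'-1, p-1, \kappa, -1-\ell)$ with $0 \leq \ell \leq \lambda$ and then carrying out the trade.

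For part (ii), in the twin prime case $q = p+2$ we have $\kappa = 1$, $\lambda = 2$, so that $\gc = \gb - (p-5)d_3$. By (i) we already know $\gc \leq g < \gb$. The non-representability of $\gc$ (hence $g \geq \gc$) is provided by the proof of Theorem~\ref{T1:Main}(i). To pin down $g = \gc$ it remains to demonstrate that every integer strictly between $\gc$ and $\gb$ is representable. This I would do by the same syzygy-trade mechanism as in (i), applied to quasi-representations with $d_3$-coefficient ranging over $-2, -3, \ldots, -(p-3)$. The twin prime structure ($q'-p' = 1$, $\kappa' = 1$, $\lambda' = 1$) gives Type~I with the maximum possible slack in the inequality $\frac{p'}{\lambda'} > \frac{\tau+2}{\tau+1}$ (indeed $\tau = 0$ and $p'/\lambda' = p' \gg 2$), making it easy to check that every coefficient stays nonnegative throughout the required trades.

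The main obstacle is controlling the syzygy trades quantitatively: one must verify at each step that no coefficient falls below zero, and the Type~I inequality must be invoked with the right margin. This amounts to a careful case split depending on the sizes of $\kappa,\lambda$ relative to $\kappa',\lambda'$ (and the parameter $\tau$). For general Type~I pairs the bookkeeping is intricate, whereas in the twin prime case the extreme slack in the Type~I inequality collapses the case analysis, which is why part (ii) admits a clean exact answer.
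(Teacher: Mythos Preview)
Your outline has a structural gap that would block the argument from going through. To get $g<\gb$ you propose to show that every integer $n$ in the interval $[\gb,\ga]$ is representable, starting from the quasi-representations $\fr(p'-1,p-1,\kappa,-1-\ell)$ for $0\le\ell\le\lambda$ and repairing the negative last coordinate by syzygy trades. But those quasi-representations take only the $\lambda+1$ values $\ga,\ga-d_3,\ldots,\ga-\lambda d_3=\gb$, whereas the interval $[\gb,\ga]$ contains $\lambda d_3+1$ integers. Nothing in your plan explains how to reach the integers between consecutive $\ga-\ell d_3$'s. The same defect recurs in your treatment of~(ii): letting the $d_3$-coefficient range over $-2,\ldots,-(p-3)$ again hits only a sparse arithmetic progression inside $(\gc,\gb)$, not the whole interval.

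The paper handles this quite differently, and the difference is not cosmetic. For the upper bound in~(i) it does two separate things. First, it shows that \emph{every} integer in $[\gb+1,\gb+d_1]$ is representable by writing $t\in[0,d_1-1]$ in the form $t=aq'+bp'+c$ via the division algorithm and building an explicit quadruple with coordinates depending on $a,b,c$; this step uses only the hypothesis $\kappa+\lambda\le p-\lambda$ and makes no reference to the Type~I/Type~II dichotomy. Second, and independently, it shows that the single integer $\gb$ is representable: here a general parametrized form of all possible quadruples representing $\gb$ is derived, and the Type~I inequality $\frac{\tau+2}{\tau+1}<\frac{p'}{\lambda'}$ is exactly what makes the choice $(t,l)=(\tau,\tau+2)$ nonnegative. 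So your sentence ``The Type~I hypothesis \ldots\ is precisely the amount of slack \ldots\ needed for such trades to succeed'' is right in spirit for $\gb$ itself, but wrong as an account of the whole interval. For~(ii) the paper again covers a full block $[\gc+1,\gc+d_1]$ via a division-algorithm parametrization, this time exploiting the twin-prime identity $d_2-d_1=1$ (equivalently $\fr(0,-1,1,0)=1$), which is what actually makes the case analysis collapse---not merely the slack in the Type~I inequality.
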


The above theorems show where $g$ lies in relation to $\ga$, $\gb$ and $\gc$.   Figure 1 shows how these results are distributed over small prime pairs. The four displayed cases correspond to $g=\ga$, $g=\gb$, $\gb<g<\ga$ and $\gc \leq g <\gb$, respectively.

%Figure 1 here

%*************************************
% Graphics

%pq table

\SpecialCoor

%\psset{unit=0.8cm}

\psset{griddots=5,subgriddiv=0,gridlabels=0pt}

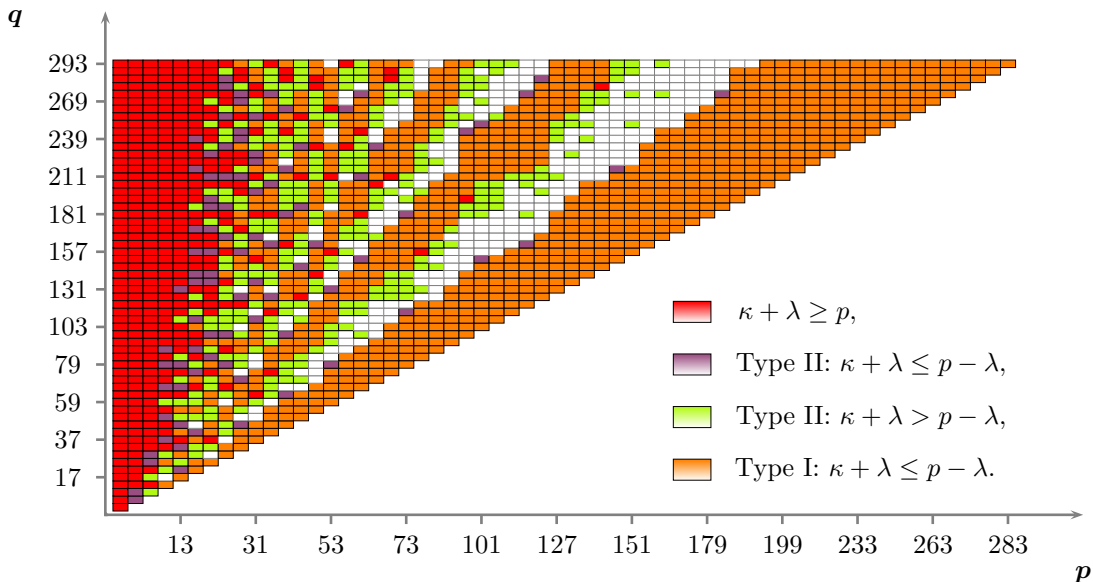
\begin{figure}[h]
\begin{center}
\begin{pspicture}(0.2,0)(14.8,7.5) %\psgrid

\psset{linewidth=1pt}
\psaxes[labels=none,Dx=1,Dy=0.5,linecolor=gray, tickstyle=bottom]{->}(1,0.5)(1,0.5)(14,7.2)

\psset{linewidth=0.5pt}

\def\ddota{
\pscustom[fillstyle=gradient,gradmidpoint=1,gradbegin=red,gradend=white,gradlines=170]{%
  \psframe(0.25,0.14)(-0.25,-0.14)}}

\def\ddotd{
\pscustom[fillstyle=gradient,gradmidpoint=1,gradbegin=orange,gradend=white,gradlines=170]{%
  \psframe(0.25,0.14)(-0.25,-0.14)}}

\def\ddotc{
\pscustom[fillstyle=gradient,gradmidpoint=1,gradbegin=deeppink,gradend=white,gradlines=170]{%
  \psframe(0.25,0.14)(-0.25,-0.14)}}
%deeppink

\def\ddotb{
\pscustom[fillstyle=gradient,gradmidpoint=1,gradbegin=brown,gradend=white,gradlines=170]{%
  \psframe(0.25,0.14)(-0.25,-0.14)}}

\def\ddote{
\pscustom[fillstyle=gradient,gradmidpoint=1,linecolor=gray, gradbegin=white,gradend=white,gradlines=170]{%
  \psframe(0.25,0.14)(-0.25,-0.14)}}

\psset{linewidth=0pt}

\def\dota{
\pscustom[fillstyle=solid,fillcolor=red]{%
  \psframe(0.1,0.05)(-0.1,-0.05)}}

\def\dotd{
\pscustom[fillstyle=solid,fillcolor=orange]{%
  \psframe(0.1,0.05)(-0.1,-0.05)}}

\def\dotc{
\pscustom[fillstyle=solid,fillcolor=deeppink]{%
  \psframe(0.1,0.05)(-0.1,-0.05)}}

\def\dotb{
\pscustom[fillstyle=solid,fillcolor=brown]{%
  \psframe(0.1,0.05)(-0.1,-0.05)}}

\def\dote{
\pscustom[fillstyle=gradient,gradmidpoint=1,linecolor=gray, gradbegin=white,gradend=white,gradlines=170]{%
  \psframe(0.1,0.05)(-0.1,-0.05)}}

\rput(-0.2,7.1){{\boldmath $q$}}
\rput(14,-0.3){{\boldmath $p$}}

  \rput(2,0.1){$13$}
  \rput(3,0.1){$31$}
  \rput(4,0.1){$53$}
  \rput(5,0.1){$73$}
  \rput(6,0.1){$101$}
  \rput(7,0.1){$127$}
  \rput(8,0.1){$151$}
  \rput(9,0.1){$179$}
  \rput(10,0.1){$199$}
  \rput(11,0.1){$233$}
  \rput(12,0.1){$263$}
  \rput(13,0.1){$283$}

%{17, 37, 59, 79, 103, 131, 157, 181, 211, 239, 269, 293}

  \rput(0.5,1){$17$}
  \rput(0.5,1.5){$37$}
  \rput(0.5,2){$59$}
  \rput(0.5,2.5){$79$}
  \rput(0.5,3){$103$}
  \rput(0.5,3.5){$131$}
  \rput(0.5,4){$157$}
  \rput(0.5,4.5){$181$}
  \rput(0.5,5){$211$}
  \rput(0.5,5.5){$239$}
  \rput(0.5,6){$269$}
  \rput(0.5,6.5){$293$}

\rput(10.2,3.2){$\kappa+\lambda \geq p$,}
\rput(11.18,2.5){Type II: $\kappa+\lambda \leq p-\lambda$,}
\rput(11.18,1.8){Type II: $\kappa+\lambda > p-\lambda$,}
\rput(11.1,1.1){Type I: $\kappa+\lambda \leq p-\lambda$.}

\multips(8.8,3.2)(0,0.5){1}{\ddota}
\rput(8.8,2.5){\ddotb}
\multips(8.8,1.8)(0,0.5){1}{\ddotc}
\multips(8.8,1.1)(0,0.5){1}{\ddotd}

\multips(1.2,0.6)(0.2,0){1}{\dota}

\multips(1.2,0.7)(0.2,0){1}{\dota}

\multips(1.4,0.7)(0.2,0){1}{\dotb}

\multips(1.2,0.8)(0.2,0){1}{\dota}

\multips(1.4,0.8)(0.2,0){1}{\dotb}

\multips(1.6,0.8)(0.2,0){1}{\dotc}

\multips(1.2,0.9)(0.2,0){3}{\dota}

\multips(1.8,0.9)(0.2,0){1}{\dotd}

\multips(1.2,1.)(0.2,0){2}{\dota}

\multips(1.6,1.)(0.2,0){1}{\dotc}

\multips(1.8,1.)(0.2,0){1}{\dote}

\multips(2.,1.)(0.2,0){1}{\dotd}

\multips(1.2,1.1)(0.2,0){3}{\dota}

\multips(1.8,1.1)(0.2,0){1}{\dotc}

\multips(2.,1.1)(0.2,0){1}{\dotb}

\multips(2.2,1.1)(0.2,0){1}{\dotd}

\multips(1.2,1.2)(0.2,0){2}{\dota}

\multips(1.6,1.2)(0.2,0){1}{\dotb}

\multips(1.8,1.2)(0.2,0){1}{\dotd}

\multips(2.,1.2)(0.2,0){1}{\dotc}

\multips(2.2,1.2)(0.2,0){2}{\dotd}

\multips(1.2,1.3)(0.2,0){2}{\dota}

\multips(1.6,1.3)(0.2,0){1}{\dotb}

\multips(1.8,1.3)(0.2,0){1}{\dotc}

\multips(2.,1.3)(0.2,0){1}{\dotd}

\multips(2.2,1.3)(0.2,0){1}{\dotc}

\multips(2.4,1.3)(0.2,0){1}{\dote}

\multips(2.6,1.3)(0.2,0){1}{\dotd}

\multips(1.2,1.4)(0.2,0){4}{\dota}

\multips(2.,1.4)(0.2,0){1}{\dotb}

\multips(2.2,1.4)(0.2,0){2}{\dotc}

\multips(2.6,1.4)(0.2,0){2}{\dotd}

\multips(1.2,1.5)(0.2,0){3}{\dota}

\multips(1.8,1.5)(0.2,0){1}{\dotb}

\multips(2.,1.5)(0.2,0){1}{\dota}

\multips(2.2,1.5)(0.2,0){1}{\dotd}

\multips(2.4,1.5)(0.2,0){1}{\dota}

\multips(2.6,1.5)(0.2,0){1}{\dote}

\multips(2.8,1.5)(0.2,0){2}{\dotd}

\multips(1.2,1.6)(0.2,0){4}{\dota}

\multips(2.,1.6)(0.2,0){1}{\dotd}

\multips(2.2,1.6)(0.2,0){1}{\dotb}

\multips(2.4,1.6)(0.2,0){1}{\dotd}

\multips(2.6,1.6)(0.2,0){1}{\dotc}

\multips(2.8,1.6)(0.2,0){3}{\dotd}

\multips(1.2,1.7)(0.2,0){4}{\dota}

\multips(2.,1.7)(0.2,0){1}{\dotb}

\multips(2.2,1.7)(0.2,0){1}{\dote}

\multips(2.4,1.7)(0.2,0){1}{\dotd}

\multips(2.6,1.7)(0.2,0){1}{\dotc}

\multips(2.8,1.7)(0.2,0){1}{\dotb}

\multips(3.,1.7)(0.2,0){3}{\dotd}

\multips(1.2,1.8)(0.2,0){3}{\dota}

\multips(1.8,1.8)(0.2,0){1}{\dotb}

\multips(2.,1.8)(0.2,0){3}{\dotc}

\multips(2.6,1.8)(0.2,0){1}{\dotd}

\multips(2.8,1.8)(0.2,0){2}{\dote}

\multips(3.2,1.8)(0.2,0){3}{\dotd}

\multips(1.2,1.9)(0.2,0){4}{\dota}

\multips(2.,1.9)(0.2,0){2}{\dotd}

\multips(2.4,1.9)(0.2,0){1}{\dotc}

\multips(2.6,1.9)(0.2,0){1}{\dotd}

\multips(2.8,1.9)(0.2,0){1}{\dotc}

\multips(3.,1.9)(0.2,0){1}{\dote}

\multips(3.2,1.9)(0.2,0){4}{\dotd}

\multips(1.2,2.)(0.2,0){3}{\dota}

\multips(1.8,2.)(0.2,0){3}{\dotc}

\multips(2.4,2.)(0.2,0){1}{\dotd}

\multips(2.6,2.)(0.2,0){1}{\dote}

\multips(2.8,2.)(0.2,0){1}{\dotd}

\multips(3.,2.)(0.2,0){1}{\dotc}

\multips(3.2,2.)(0.2,0){1}{\dote}

\multips(3.4,2.)(0.2,0){4}{\dotd}

\multips(1.2,2.1)(0.2,0){5}{\dota}

\multips(2.2,2.1)(0.2,0){1}{\dotc}

\multips(2.4,2.1)(0.2,0){1}{\dotd}

\multips(2.6,2.1)(0.2,0){1}{\dotc}

\multips(2.8,2.1)(0.2,0){1}{\dotd}

\multips(3.,2.1)(0.2,0){1}{\dota}

\multips(3.2,2.1)(0.2,0){1}{\dotc}

\multips(3.4,2.1)(0.2,0){1}{\dotb}

\multips(3.6,2.1)(0.2,0){4}{\dotd}

\multips(1.2,2.2)(0.2,0){3}{\dota}

\multips(1.8,2.2)(0.2,0){2}{\dotb}

\multips(2.2,2.2)(0.2,0){1}{\dota}

\multips(2.4,2.2)(0.2,0){1}{\dotc}

\multips(2.6,2.2)(0.2,0){1}{\dota}

\multips(2.8,2.2)(0.2,0){2}{\dotd}

\multips(3.2,2.2)(0.2,0){1}{\dotc}

\multips(3.4,2.2)(0.2,0){2}{\dote}

\multips(3.8,2.2)(0.2,0){4}{\dotd}

\multips(1.2,2.3)(0.2,0){4}{\dota}

\multips(2.,2.3)(0.2,0){1}{\dotc}

\multips(2.2,2.3)(0.2,0){1}{\dotd}

\multips(2.4,2.3)(0.2,0){1}{\dotc}

\multips(2.6,2.3)(0.2,0){1}{\dotd}

\multips(2.8,2.3)(0.2,0){1}{\dotb}

\multips(3.,2.3)(0.2,0){1}{\dotd}

\multips(3.2,2.3)(0.2,0){3}{\dotc}

\multips(3.8,2.3)(0.2,0){1}{\dote}

\multips(4.,2.3)(0.2,0){4}{\dotd}

\multips(1.2,2.4)(0.2,0){5}{\dota}

\multips(2.2,2.4)(0.2,0){1}{\dotb}

\multips(2.4,2.4)(0.2,0){1}{\dota}

\multips(2.6,2.4)(0.2,0){1}{\dotd}

\multips(2.8,2.4)(0.2,0){1}{\dote}

\multips(3.,2.4)(0.2,0){1}{\dotd}

\multips(3.2,2.4)(0.2,0){1}{\dota}

\multips(3.4,2.4)(0.2,0){1}{\dotc}

\multips(3.6,2.4)(0.2,0){2}{\dote}

\multips(4.,2.4)(0.2,0){5}{\dotd}

\multips(1.2,2.5)(0.2,0){3}{\dota}

\multips(1.8,2.5)(0.2,0){2}{\dotb}

\multips(2.2,2.5)(0.2,0){1}{\dotc}

\multips(2.4,2.5)(0.2,0){1}{\dotd}

\multips(2.6,2.5)(0.2,0){1}{\dotb}

\multips(2.8,2.5)(0.2,0){1}{\dotc}

\multips(3.,2.5)(0.2,0){1}{\dote}

\multips(3.2,2.5)(0.2,0){1}{\dotd}

\multips(3.4,2.5)(0.2,0){2}{\dotc}

\multips(3.8,2.5)(0.2,0){1}{\dote}

\multips(4.,2.5)(0.2,0){1}{\dotb}

\multips(4.2,2.5)(0.2,0){5}{\dotd}

\multips(1.2,2.6)(0.2,0){4}{\dota}

\multips(2.,2.6)(0.2,0){1}{\dotc}

\multips(2.2,2.6)(0.2,0){1}{\dota}

\multips(2.4,2.6)(0.2,0){1}{\dotb}

\multips(2.6,2.6)(0.2,0){2}{\dotc}

\multips(3.,2.6)(0.2,0){1}{\dote}

\multips(3.2,2.6)(0.2,0){2}{\dotd}

\multips(3.6,2.6)(0.2,0){1}{\dotc}

\multips(3.8,2.6)(0.2,0){2}{\dote}

\multips(4.2,2.6)(0.2,0){6}{\dotd}

\multips(1.2,2.7)(0.2,0){3}{\dota}

\multips(1.8,2.7)(0.2,0){1}{\dotb}

\multips(2.,2.7)(0.2,0){1}{\dota}

\multips(2.2,2.7)(0.2,0){1}{\dotb}

\multips(2.4,2.7)(0.2,0){1}{\dotc}

\multips(2.6,2.7)(0.2,0){1}{\dota}

\multips(2.8,2.7)(0.2,0){1}{\dotd}

\multips(3.,2.7)(0.2,0){1}{\dotc}

\multips(3.2,2.7)(0.2,0){3}{\dotd}

\multips(3.8,2.7)(0.2,0){1}{\dotc}

\multips(4.,2.7)(0.2,0){2}{\dote}

\multips(4.4,2.7)(0.2,0){6}{\dotd}

\multips(1.2,2.8)(0.2,0){6}{\dota}

\multips(2.4,2.8)(0.2,0){4}{\dotd}

\multips(3.2,2.8)(0.2,0){1}{\dotc}

\multips(3.4,2.8)(0.2,0){3}{\dotd}

\multips(4.,2.8)(0.2,0){1}{\dotc}

\multips(4.2,2.8)(0.2,0){2}{\dote}

\multips(4.6,2.8)(0.2,0){6}{\dotd}

\multips(1.2,2.9)(0.2,0){6}{\dota}

\multips(2.4,2.9)(0.2,0){2}{\dotb}

\multips(2.8,2.9)(0.2,0){1}{\dotc}

\multips(3.,2.9)(0.2,0){1}{\dotd}

\multips(3.2,2.9)(0.2,0){1}{\dotc}

\multips(3.4,2.9)(0.2,0){1}{\dotb}

\multips(3.6,2.9)(0.2,0){2}{\dotd}

\multips(4.,2.9)(0.2,0){1}{\dotc}

\multips(4.2,2.9)(0.2,0){1}{\dote}

\multips(4.4,2.9)(0.2,0){1}{\dotc}

\multips(4.6,2.9)(0.2,0){1}{\dote}

\multips(4.8,2.9)(0.2,0){6}{\dotd}

\multips(1.2,3.)(0.2,0){5}{\dota}

\multips(2.2,3.)(0.2,0){1}{\dotd}

\multips(2.4,3.)(0.2,0){2}{\dotc}

\multips(2.8,3.)(0.2,0){1}{\dote}

\multips(3.,3.)(0.2,0){1}{\dotd}

\multips(3.2,3.)(0.2,0){1}{\dotc}

\multips(3.4,3.)(0.2,0){1}{\dote}

\multips(3.6,3.)(0.2,0){2}{\dotd}

\multips(4.,3.)(0.2,0){2}{\dotc}

\multips(4.4,3.)(0.2,0){2}{\dote}

\multips(4.8,3.)(0.2,0){7}{\dotd}

\multips(1.2,3.1)(0.2,0){4}{\dota}

\multips(2.,3.1)(0.2,0){1}{\dotc}

\multips(2.2,3.1)(0.2,0){1}{\dotb}

\multips(2.4,3.1)(0.2,0){3}{\dotc}

\multips(3.,3.1)(0.2,0){1}{\dotb}

\multips(3.2,3.1)(0.2,0){1}{\dotc}

\multips(3.4,3.1)(0.2,0){1}{\dote}

\multips(3.6,3.1)(0.2,0){1}{\dotc}

\multips(3.8,3.1)(0.2,0){2}{\dotd}

\multips(4.2,3.1)(0.2,0){4}{\dote}

\multips(5.,3.1)(0.2,0){7}{\dotd}

\multips(1.2,3.2)(0.2,0){5}{\dota}

\multips(2.2,3.2)(0.2,0){1}{\dotc}

\multips(2.4,3.2)(0.2,0){1}{\dota}

\multips(2.6,3.2)(0.2,0){2}{\dotc}

\multips(3.,3.2)(0.2,0){1}{\dote}

\multips(3.2,3.2)(0.2,0){1}{\dota}

\multips(3.4,3.2)(0.2,0){1}{\dotc}

\multips(3.6,3.2)(0.2,0){1}{\dote}

\multips(3.8,3.2)(0.2,0){2}{\dotd}

\multips(4.2,3.2)(0.2,0){2}{\dotc}

\multips(4.6,3.2)(0.2,0){2}{\dote}

\multips(5.,3.2)(0.2,0){1}{\dotb}

\multips(5.2,3.2)(0.2,0){7}{\dotd}

\multips(1.2,3.3)(0.2,0){9}{\dota}

\multips(3.,3.3)(0.2,0){1}{\dotc}

\multips(3.2,3.3)(0.2,0){1}{\dotd}

\multips(3.4,3.3)(0.2,0){2}{\dotc}

\multips(3.8,3.3)(0.2,0){2}{\dotd}

\multips(4.2,3.3)(0.2,0){2}{\dotc}

\multips(4.6,3.3)(0.2,0){3}{\dote}

\multips(5.2,3.3)(0.2,0){8}{\dotd}

\multips(1.2,3.4)(0.2,0){5}{\dota}

\multips(2.2,3.4)(0.2,0){1}{\dotc}

\multips(2.4,3.4)(0.2,0){1}{\dota}

\multips(2.6,3.4)(0.2,0){1}{\dotc}

\multips(2.8,3.4)(0.2,0){1}{\dotb}

\multips(3.,3.4)(0.2,0){1}{\dotd}

\multips(3.2,3.4)(0.2,0){1}{\dotb}

\multips(3.4,3.4)(0.2,0){1}{\dotd}

\multips(3.6,3.4)(0.2,0){1}{\dota}

\multips(3.8,3.4)(0.2,0){1}{\dote}

\multips(4.,3.4)(0.2,0){3}{\dotd}

\multips(4.6,3.4)(0.2,0){3}{\dotc}

\multips(5.2,3.4)(0.2,0){2}{\dote}

\multips(5.6,3.4)(0.2,0){7}{\dotd}

\multips(1.2,3.5)(0.2,0){4}{\dota}

\multips(2.,3.5)(0.2,0){1}{\dotb}

\multips(2.2,3.5)(0.2,0){2}{\dota}

\multips(2.6,3.5)(0.2,0){1}{\dotc}

\multips(2.8,3.5)(0.2,0){1}{\dote}

\multips(3.,3.5)(0.2,0){1}{\dotd}

\multips(3.2,3.5)(0.2,0){1}{\dote}

\multips(3.4,3.5)(0.2,0){2}{\dotd}

\multips(3.8,3.5)(0.2,0){1}{\dotc}

\multips(4.,3.5)(0.2,0){1}{\dotb}

\multips(4.2,3.5)(0.2,0){2}{\dotd}

\multips(4.6,3.5)(0.2,0){4}{\dotc}

\multips(5.4,3.5)(0.2,0){1}{\dote}

\multips(5.6,3.5)(0.2,0){8}{\dotd}

\multips(1.2,3.6)(0.2,0){5}{\dota}

\multips(2.2,3.6)(0.2,0){2}{\dotb}

\multips(2.6,3.6)(0.2,0){1}{\dota}

\multips(2.8,3.6)(0.2,0){1}{\dotc}

\multips(3.,3.6)(0.2,0){1}{\dotb}

\multips(3.2,3.6)(0.2,0){1}{\dotc}

\multips(3.4,3.6)(0.2,0){2}{\dotd}

\multips(3.8,3.6)(0.2,0){1}{\dotc}

\multips(4.,3.6)(0.2,0){1}{\dote}

\multips(4.2,3.6)(0.2,0){3}{\dotd}

\multips(4.8,3.6)(0.2,0){2}{\dotc}

\multips(5.2,3.6)(0.2,0){3}{\dote}

\multips(5.8,3.6)(0.2,0){8}{\dotd}

\multips(1.2,3.7)(0.2,0){5}{\dota}

\multips(2.2,3.7)(0.2,0){2}{\dotb}

\multips(2.6,3.7)(0.2,0){1}{\dotd}

\multips(2.8,3.7)(0.2,0){3}{\dotc}

\multips(3.4,3.7)(0.2,0){2}{\dotd}

\multips(3.8,3.7)(0.2,0){1}{\dota}

\multips(4.,3.7)(0.2,0){1}{\dote}

\multips(4.2,3.7)(0.2,0){3}{\dotd}

\multips(4.8,3.7)(0.2,0){2}{\dotc}

\multips(5.2,3.7)(0.2,0){3}{\dote}

\multips(5.8,3.7)(0.2,0){9}{\dotd}

\multips(1.2,3.8)(0.2,0){7}{\dota}

\multips(2.6,3.8)(0.2,0){1}{\dotc}

\multips(2.8,3.8)(0.2,0){1}{\dotd}

\multips(3.,3.8)(0.2,0){1}{\dotc}

\multips(3.2,3.8)(0.2,0){1}{\dotd}

\multips(3.4,3.8)(0.2,0){1}{\dotc}

\multips(3.6,3.8)(0.2,0){1}{\dotb}

\multips(3.8,3.8)(0.2,0){1}{\dotd}

\multips(4.,3.8)(0.2,0){1}{\dotc}

\multips(4.2,3.8)(0.2,0){1}{\dote}

\multips(4.4,3.8)(0.2,0){4}{\dotd}

\multips(5.2,3.8)(0.2,0){2}{\dotc}

\multips(5.6,3.8)(0.2,0){2}{\dote}

\multips(6.,3.8)(0.2,0){9}{\dotd}

\multips(1.2,3.9)(0.2,0){7}{\dota}

\multips(2.6,3.9)(0.2,0){1}{\dotc}

\multips(2.8,3.9)(0.2,0){1}{\dotd}

\multips(3.,3.9)(0.2,0){1}{\dota}

\multips(3.2,3.9)(0.2,0){1}{\dotd}

\multips(3.4,3.9)(0.2,0){1}{\dotc}

\multips(3.6,3.9)(0.2,0){1}{\dote}

\multips(3.8,3.9)(0.2,0){1}{\dotd}

\multips(4.,3.9)(0.2,0){1}{\dotc}

\multips(4.2,3.9)(0.2,0){1}{\dote}

\multips(4.4,3.9)(0.2,0){1}{\dotb}

\multips(4.6,3.9)(0.2,0){3}{\dotd}

\multips(5.2,3.9)(0.2,0){1}{\dotc}

\multips(5.4,3.9)(0.2,0){3}{\dote}

\multips(6.,3.9)(0.2,0){1}{\dotb}

\multips(6.2,3.9)(0.2,0){9}{\dotd}

\multips(1.2,4.)(0.2,0){5}{\dota}

\multips(2.2,4.)(0.2,0){2}{\dotb}

\multips(2.6,4.)(0.2,0){1}{\dota}

\multips(2.8,4.)(0.2,0){1}{\dotb}

\multips(3.,4.)(0.2,0){2}{\dotd}

\multips(3.4,4.)(0.2,0){2}{\dotc}

\multips(3.8,4.)(0.2,0){1}{\dotd}

\multips(4.,4.)(0.2,0){1}{\dota}

\multips(4.2,4.)(0.2,0){1}{\dotc}

\multips(4.4,4.)(0.2,0){1}{\dote}

\multips(4.6,4.)(0.2,0){3}{\dotd}

\multips(5.2,4.)(0.2,0){1}{\dota}

\multips(5.4,4.)(0.2,0){1}{\dotc}

\multips(5.6,4.)(0.2,0){4}{\dote}

\multips(6.4,4.)(0.2,0){9}{\dotd}

\multips(1.2,4.1)(0.2,0){7}{\dota}

\multips(2.6,4.1)(0.2,0){1}{\dotd}

\multips(2.8,4.1)(0.2,0){1}{\dotc}

\multips(3.,4.1)(0.2,0){1}{\dotd}

\multips(3.2,4.1)(0.2,0){1}{\dotb}

\multips(3.4,4.1)(0.2,0){1}{\dota}

\multips(3.6,4.1)(0.2,0){1}{\dotc}

\multips(3.8,4.1)(0.2,0){1}{\dotb}

\multips(4.,4.1)(0.2,0){1}{\dotd}

\multips(4.2,4.1)(0.2,0){2}{\dote}

\multips(4.6,4.1)(0.2,0){4}{\dotd}

\multips(5.4,4.1)(0.2,0){2}{\dotc}

\multips(5.8,4.1)(0.2,0){4}{\dote}

\multips(6.6,4.1)(0.2,0){1}{\dotb}

\multips(6.8,4.1)(0.2,0){8}{\dotd}

\multips(1.2,4.2)(0.2,0){7}{\dota}

\multips(2.6,4.2)(0.2,0){1}{\dotb}

\multips(2.8,4.2)(0.2,0){1}{\dotc}

\multips(3.,4.2)(0.2,0){1}{\dotb}

\multips(3.2,4.2)(0.2,0){1}{\dote}

\multips(3.4,4.2)(0.2,0){1}{\dotd}

\multips(3.6,4.2)(0.2,0){1}{\dotc}

\multips(3.8,4.2)(0.2,0){1}{\dote}

\multips(4.,4.2)(0.2,0){1}{\dotd}

\multips(4.2,4.2)(0.2,0){3}{\dotc}

\multips(4.8,4.2)(0.2,0){4}{\dotd}

\multips(5.6,4.2)(0.2,0){6}{\dote}

\multips(6.8,4.2)(0.2,0){9}{\dotd}

\multips(1.2,4.3)(0.2,0){5}{\dota}

\multips(2.2,4.3)(0.2,0){2}{\dotb}

\multips(2.6,4.3)(0.2,0){1}{\dotc}

\multips(2.8,4.3)(0.2,0){1}{\dota}

\multips(3.,4.3)(0.2,0){2}{\dotc}

\multips(3.4,4.3)(0.2,0){2}{\dotd}

\multips(3.8,4.3)(0.2,0){1}{\dote}

\multips(4.,4.3)(0.2,0){1}{\dotd}

\multips(4.2,4.3)(0.2,0){2}{\dotc}

\multips(4.6,4.3)(0.2,0){1}{\dote}

\multips(4.8,4.3)(0.2,0){4}{\dotd}

\multips(5.6,4.3)(0.2,0){1}{\dotc}

\multips(5.8,4.3)(0.2,0){6}{\dote}

\multips(7.,4.3)(0.2,0){9}{\dotd}

\multips(1.2,4.4)(0.2,0){6}{\dota}

\multips(2.4,4.4)(0.2,0){1}{\dotc}

\multips(2.6,4.4)(0.2,0){1}{\dota}

\multips(2.8,4.4)(0.2,0){1}{\dotd}

\multips(3.,4.4)(0.2,0){2}{\dotc}

\multips(3.4,4.4)(0.2,0){2}{\dotd}

\multips(3.8,4.4)(0.2,0){1}{\dotc}

\multips(4.,4.4)(0.2,0){2}{\dotd}

\multips(4.4,4.4)(0.2,0){1}{\dotc}

\multips(4.6,4.4)(0.2,0){2}{\dote}

\multips(5.,4.4)(0.2,0){4}{\dotd}

\multips(5.8,4.4)(0.2,0){6}{\dote}

\multips(7.,4.4)(0.2,0){10}{\dotd}

\multips(1.2,4.5)(0.2,0){8}{\dota}

\multips(2.8,4.5)(0.2,0){1}{\dotd}

\multips(3.,4.5)(0.2,0){2}{\dota}

\multips(3.4,4.5)(0.2,0){1}{\dotb}

\multips(3.6,4.5)(0.2,0){1}{\dotd}

\multips(3.8,4.5)(0.2,0){1}{\dotc}

\multips(4.,4.5)(0.2,0){2}{\dotd}

\multips(4.4,4.5)(0.2,0){1}{\dota}

\multips(4.6,4.5)(0.2,0){2}{\dote}

\multips(5.,4.5)(0.2,0){1}{\dotb}

\multips(5.2,4.5)(0.2,0){3}{\dotd}

\multips(5.8,4.5)(0.2,0){2}{\dotc}

\multips(6.2,4.5)(0.2,0){2}{\dote}

\multips(6.6,4.5)(0.2,0){1}{\dotc}

\multips(6.8,4.5)(0.2,0){1}{\dote}

\multips(7.,4.5)(0.2,0){11}{\dotd}

\multips(1.2,4.6)(0.2,0){5}{\dota}

\multips(2.2,4.6)(0.2,0){1}{\dotc}

\multips(2.4,4.6)(0.2,0){2}{\dotb}

\multips(2.8,4.6)(0.2,0){1}{\dotc}

\multips(3.,4.6)(0.2,0){2}{\dotd}

\multips(3.4,4.6)(0.2,0){1}{\dotc}

\multips(3.6,4.6)(0.2,0){1}{\dotb}

\multips(3.8,4.6)(0.2,0){1}{\dotd}

\multips(4.,4.6)(0.2,0){1}{\dote}

\multips(4.2,4.6)(0.2,0){2}{\dotd}

\multips(4.6,4.6)(0.2,0){1}{\dotc}

\multips(4.8,4.6)(0.2,0){2}{\dote}

\multips(5.2,4.6)(0.2,0){3}{\dotd}

\multips(5.8,4.6)(0.2,0){3}{\dotc}

\multips(6.4,4.6)(0.2,0){4}{\dote}

\multips(7.2,4.6)(0.2,0){11}{\dotd}

\multips(1.2,4.7)(0.2,0){6}{\dota}

\multips(2.4,4.7)(0.2,0){1}{\dotb}

\multips(2.6,4.7)(0.2,0){2}{\dotc}

\multips(3.,4.7)(0.2,0){2}{\dotd}

\multips(3.4,4.7)(0.2,0){2}{\dotc}

\multips(3.8,4.7)(0.2,0){1}{\dotd}

\multips(4.,4.7)(0.2,0){1}{\dotc}

\multips(4.2,4.7)(0.2,0){2}{\dotd}

\multips(4.6,4.7)(0.2,0){1}{\dotc}

\multips(4.8,4.7)(0.2,0){1}{\dote}

\multips(5.,4.7)(0.2,0){1}{\dotc}

\multips(5.2,4.7)(0.2,0){3}{\dotd}

\multips(5.8,4.7)(0.2,0){1}{\dota}

\multips(6.,4.7)(0.2,0){2}{\dotc}

\multips(6.4,4.7)(0.2,0){4}{\dote}

\multips(7.2,4.7)(0.2,0){12}{\dotd}

\multips(1.2,4.8)(0.2,0){6}{\dota}

\multips(2.4,4.8)(0.2,0){2}{\dotc}

\multips(2.8,4.8)(0.2,0){1}{\dota}

\multips(3.,4.8)(0.2,0){1}{\dotb}

\multips(3.2,4.8)(0.2,0){1}{\dotd}

\multips(3.4,4.8)(0.2,0){2}{\dotc}

\multips(3.8,4.8)(0.2,0){1}{\dotd}

\multips(4.,4.8)(0.2,0){1}{\dotc}

\multips(4.2,4.8)(0.2,0){2}{\dotd}

\multips(4.6,4.8)(0.2,0){2}{\dotc}

\multips(5.,4.8)(0.2,0){1}{\dote}

\multips(5.2,4.8)(0.2,0){1}{\dotc}

\multips(5.4,4.8)(0.2,0){3}{\dotd}

\multips(6.,4.8)(0.2,0){2}{\dotc}

\multips(6.4,4.8)(0.2,0){2}{\dote}

\multips(6.8,4.8)(0.2,0){1}{\dotc}

\multips(7.,4.8)(0.2,0){2}{\dote}

\multips(7.4,4.8)(0.2,0){12}{\dotd}

\multips(1.2,4.9)(0.2,0){9}{\dota}

\multips(3.,4.9)(0.2,0){1}{\dotc}

\multips(3.2,4.9)(0.2,0){1}{\dotb}

\multips(3.4,4.9)(0.2,0){2}{\dotc}

\multips(3.8,4.9)(0.2,0){1}{\dotd}

\multips(4.,4.9)(0.2,0){1}{\dotc}

\multips(4.2,4.9)(0.2,0){2}{\dotd}

\multips(4.6,4.9)(0.2,0){1}{\dota}

\multips(4.8,4.9)(0.2,0){1}{\dote}

\multips(5.,4.9)(0.2,0){1}{\dotc}

\multips(5.2,4.9)(0.2,0){1}{\dote}

\multips(5.4,4.9)(0.2,0){3}{\dotd}

\multips(6.,4.9)(0.2,0){2}{\dotc}

\multips(6.4,4.9)(0.2,0){1}{\dote}

\multips(6.6,4.9)(0.2,0){1}{\dotc}

\multips(6.8,4.9)(0.2,0){3}{\dote}

\multips(7.4,4.9)(0.2,0){13}{\dotd}

\multips(1.2,5.)(0.2,0){6}{\dota}

\multips(2.4,5.)(0.2,0){3}{\dotb}

\multips(3.,5.)(0.2,0){1}{\dota}

\multips(3.2,5.)(0.2,0){1}{\dotc}

\multips(3.4,5.)(0.2,0){1}{\dotd}

\multips(3.6,5.)(0.2,0){1}{\dota}

\multips(3.8,5.)(0.2,0){1}{\dotc}

\multips(4.,5.)(0.2,0){1}{\dota}

\multips(4.2,5.)(0.2,0){1}{\dote}

\multips(4.4,5.)(0.2,0){1}{\dotb}

\multips(4.6,5.)(0.2,0){1}{\dotd}

\multips(4.8,5.)(0.2,0){1}{\dota}

\multips(5.,5.)(0.2,0){1}{\dotc}

\multips(5.2,5.)(0.2,0){2}{\dote}

\multips(5.6,5.)(0.2,0){4}{\dotd}

\multips(6.4,5.)(0.2,0){4}{\dotc}

\multips(7.2,5.)(0.2,0){3}{\dote}

\multips(7.8,5.)(0.2,0){12}{\dotd}

\multips(1.2,5.1)(0.2,0){5}{\dota}

\multips(2.2,5.1)(0.2,0){1}{\dotb}

\multips(2.4,5.1)(0.2,0){2}{\dota}

\multips(2.8,5.1)(0.2,0){1}{\dotc}

\multips(3.,5.1)(0.2,0){2}{\dotd}

\multips(3.4,5.1)(0.2,0){1}{\dotb}

\multips(3.6,5.1)(0.2,0){1}{\dotd}

\multips(3.8,5.1)(0.2,0){1}{\dotc}

\multips(4.,5.1)(0.2,0){1}{\dotd}

\multips(4.2,5.1)(0.2,0){2}{\dotc}

\multips(4.6,5.1)(0.2,0){3}{\dotd}

\multips(5.2,5.1)(0.2,0){1}{\dotc}

\multips(5.4,5.1)(0.2,0){2}{\dote}

\multips(5.8,5.1)(0.2,0){5}{\dotd}

\multips(6.8,5.1)(0.2,0){1}{\dotc}

\multips(7.,5.1)(0.2,0){4}{\dote}

\multips(7.8,5.1)(0.2,0){1}{\dotb}

\multips(8.,5.1)(0.2,0){12}{\dotd}

\multips(1.2,5.2)(0.2,0){9}{\dota}

\multips(3.,5.2)(0.2,0){1}{\dotb}

\multips(3.2,5.2)(0.2,0){1}{\dotd}

\multips(3.4,5.2)(0.2,0){1}{\dote}

\multips(3.6,5.2)(0.2,0){1}{\dotd}

\multips(3.8,5.2)(0.2,0){1}{\dotc}

\multips(4.,5.2)(0.2,0){1}{\dotd}

\multips(4.2,5.2)(0.2,0){2}{\dotc}

\multips(4.6,5.2)(0.2,0){3}{\dotd}

\multips(5.2,5.2)(0.2,0){2}{\dotc}

\multips(5.6,5.2)(0.2,0){1}{\dote}

\multips(5.8,5.2)(0.2,0){6}{\dotd}

\multips(7.,5.2)(0.2,0){6}{\dote}

\multips(8.2,5.2)(0.2,0){12}{\dotd}

\multips(1.2,5.3)(0.2,0){6}{\dota}

\multips(2.4,5.3)(0.2,0){1}{\dotb}

\multips(2.6,5.3)(0.2,0){2}{\dota}

\multips(3.,5.3)(0.2,0){1}{\dotb}

\multips(3.2,5.3)(0.2,0){1}{\dotd}

\multips(3.4,5.3)(0.2,0){1}{\dotc}

\multips(3.6,5.3)(0.2,0){1}{\dotd}

\multips(3.8,5.3)(0.2,0){1}{\dotc}

\multips(4.,5.3)(0.2,0){1}{\dotd}

\multips(4.2,5.3)(0.2,0){2}{\dotc}

\multips(4.6,5.3)(0.2,0){3}{\dotd}

\multips(5.2,5.3)(0.2,0){1}{\dotc}

\multips(5.4,5.3)(0.2,0){2}{\dote}

\multips(5.8,5.3)(0.2,0){6}{\dotd}

\multips(7.,5.3)(0.2,0){1}{\dote}

\multips(7.2,5.3)(0.2,0){1}{\dotc}

\multips(7.4,5.3)(0.2,0){4}{\dote}

\multips(8.2,5.3)(0.2,0){13}{\dotd}

\multips(1.2,5.4)(0.2,0){6}{\dota}

\multips(2.4,5.4)(0.2,0){1}{\dotc}

\multips(2.6,5.4)(0.2,0){1}{\dotb}

\multips(2.8,5.4)(0.2,0){1}{\dotd}

\multips(3.,5.4)(0.2,0){1}{\dotc}

\multips(3.2,5.4)(0.2,0){1}{\dotd}

\multips(3.4,5.4)(0.2,0){1}{\dotc}

\multips(3.6,5.4)(0.2,0){1}{\dotb}

\multips(3.8,5.4)(0.2,0){1}{\dota}

\multips(4.,5.4)(0.2,0){1}{\dotd}

\multips(4.2,5.4)(0.2,0){1}{\dota}

\multips(4.4,5.4)(0.2,0){1}{\dotc}

\multips(4.6,5.4)(0.2,0){1}{\dotb}

\multips(4.8,5.4)(0.2,0){2}{\dotd}

\multips(5.2,5.4)(0.2,0){1}{\dotc}

\multips(5.4,5.4)(0.2,0){2}{\dote}

\multips(5.8,5.4)(0.2,0){6}{\dotd}

\multips(7.,5.4)(0.2,0){6}{\dote}

\multips(8.2,5.4)(0.2,0){14}{\dotd}

\multips(1.2,5.5)(0.2,0){5}{\dota}

\multips(2.2,5.5)(0.2,0){1}{\dotb}

\multips(2.4,5.5)(0.2,0){1}{\dota}

\multips(2.6,5.5)(0.2,0){1}{\dotc}

\multips(2.8,5.5)(0.2,0){1}{\dotb}

\multips(3.,5.5)(0.2,0){4}{\dotc}

\multips(3.8,5.5)(0.2,0){1}{\dotd}

\multips(4.,5.5)(0.2,0){1}{\dote}

\multips(4.2,5.5)(0.2,0){1}{\dotd}

\multips(4.4,5.5)(0.2,0){1}{\dotc}

\multips(4.6,5.5)(0.2,0){1}{\dote}

\multips(4.8,5.5)(0.2,0){3}{\dotd}

\multips(5.4,5.5)(0.2,0){1}{\dotc}

\multips(5.6,5.5)(0.2,0){1}{\dote}

\multips(5.8,5.5)(0.2,0){6}{\dotd}

\multips(7.,5.5)(0.2,0){1}{\dotc}

\multips(7.2,5.5)(0.2,0){1}{\dote}

\multips(7.4,5.5)(0.2,0){1}{\dotc}

\multips(7.6,5.5)(0.2,0){4}{\dote}

\multips(8.4,5.5)(0.2,0){14}{\dotd}

\multips(1.2,5.6)(0.2,0){7}{\dota}

\multips(2.6,5.6)(0.2,0){1}{\dotc}

\multips(2.8,5.6)(0.2,0){1}{\dotb}

\multips(3.,5.6)(0.2,0){1}{\dota}

\multips(3.2,5.6)(0.2,0){1}{\dotc}

\multips(3.4,5.6)(0.2,0){1}{\dota}

\multips(3.6,5.6)(0.2,0){1}{\dotc}

\multips(3.8,5.6)(0.2,0){1}{\dotd}

\multips(4.,5.6)(0.2,0){1}{\dote}

\multips(4.2,5.6)(0.2,0){1}{\dotd}

\multips(4.4,5.6)(0.2,0){1}{\dota}

\multips(4.6,5.6)(0.2,0){1}{\dote}

\multips(4.8,5.6)(0.2,0){3}{\dotd}

\multips(5.4,5.6)(0.2,0){1}{\dotc}

\multips(5.6,5.6)(0.2,0){1}{\dote}

\multips(5.8,5.6)(0.2,0){1}{\dotb}

\multips(6.,5.6)(0.2,0){5}{\dotd}

\multips(7.,5.6)(0.2,0){1}{\dotc}

\multips(7.2,5.6)(0.2,0){6}{\dote}

\multips(8.4,5.6)(0.2,0){15}{\dotd}

\multips(1.2,5.7)(0.2,0){6}{\dota}

\multips(2.4,5.7)(0.2,0){1}{\dotc}

\multips(2.6,5.7)(0.2,0){1}{\dota}

\multips(2.8,5.7)(0.2,0){1}{\dotc}

\multips(3.,5.7)(0.2,0){1}{\dotd}

\multips(3.2,5.7)(0.2,0){1}{\dotc}

\multips(3.4,5.7)(0.2,0){1}{\dotd}

\multips(3.6,5.7)(0.2,0){1}{\dotc}

\multips(3.8,5.7)(0.2,0){1}{\dotd}

\multips(4.,5.7)(0.2,0){1}{\dotc}

\multips(4.2,5.7)(0.2,0){2}{\dotd}

\multips(4.6,5.7)(0.2,0){1}{\dotc}

\multips(4.8,5.7)(0.2,0){1}{\dote}

\multips(5.,5.7)(0.2,0){3}{\dotd}

\multips(5.6,5.7)(0.2,0){1}{\dotc}

\multips(5.8,5.7)(0.2,0){1}{\dote}

\multips(6.,5.7)(0.2,0){1}{\dotb}

\multips(6.2,5.7)(0.2,0){4}{\dotd}

\multips(7.,5.7)(0.2,0){3}{\dotc}

\multips(7.6,5.7)(0.2,0){2}{\dote}

\multips(8.,5.7)(0.2,0){1}{\dotc}

\multips(8.2,5.7)(0.2,0){3}{\dote}

\multips(8.8,5.7)(0.2,0){14}{\dotd}

\multips(1.2,5.8)(0.2,0){7}{\dota}

\multips(2.6,5.8)(0.2,0){1}{\dotb}

\multips(2.8,5.8)(0.2,0){1}{\dota}

\multips(3.,5.8)(0.2,0){1}{\dotb}

\multips(3.2,5.8)(0.2,0){1}{\dota}

\multips(3.4,5.8)(0.2,0){1}{\dotd}

\multips(3.6,5.8)(0.2,0){1}{\dota}

\multips(3.8,5.8)(0.2,0){2}{\dotc}

\multips(4.2,5.8)(0.2,0){2}{\dotd}

\multips(4.6,5.8)(0.2,0){1}{\dotc}

\multips(4.8,5.8)(0.2,0){2}{\dote}

\multips(5.2,5.8)(0.2,0){2}{\dotd}

\multips(5.6,5.8)(0.2,0){2}{\dotc}

\multips(6.,5.8)(0.2,0){1}{\dote}

\multips(6.2,5.8)(0.2,0){1}{\dotc}

\multips(6.4,5.8)(0.2,0){4}{\dotd}

\multips(7.2,5.8)(0.2,0){1}{\dotc}

\multips(7.4,5.8)(0.2,0){7}{\dote}

\multips(8.8,5.8)(0.2,0){15}{\dotd}

\multips(1.2,5.9)(0.2,0){7}{\dota}

\multips(2.6,5.9)(0.2,0){1}{\dotc}

\multips(2.8,5.9)(0.2,0){1}{\dotd}

\multips(3.,5.9)(0.2,0){1}{\dotc}

\multips(3.2,5.9)(0.2,0){1}{\dotd}

\multips(3.4,5.9)(0.2,0){1}{\dotb}

\multips(3.6,5.9)(0.2,0){1}{\dotd}

\multips(3.8,5.9)(0.2,0){1}{\dotc}

\multips(4.,5.9)(0.2,0){1}{\dota}

\multips(4.2,5.9)(0.2,0){1}{\dotb}

\multips(4.4,5.9)(0.2,0){1}{\dotd}

\multips(4.6,5.9)(0.2,0){1}{\dotc}

\multips(4.8,5.9)(0.2,0){2}{\dote}

\multips(5.2,5.9)(0.2,0){2}{\dotd}

\multips(5.6,5.9)(0.2,0){1}{\dotc}

\multips(5.8,5.9)(0.2,0){3}{\dote}

\multips(6.4,5.9)(0.2,0){5}{\dotd}

\multips(7.4,5.9)(0.2,0){1}{\dotc}

\multips(7.6,5.9)(0.2,0){7}{\dote}

\multips(9.,5.9)(0.2,0){15}{\dotd}

\multips(1.2,6.)(0.2,0){6}{\dota}

\multips(2.4,6.)(0.2,0){1}{\dotc}

\multips(2.6,6.)(0.2,0){1}{\dota}

\multips(2.8,6.)(0.2,0){1}{\dotb}

\multips(3.,6.)(0.2,0){1}{\dotc}

\multips(3.2,6.)(0.2,0){1}{\dotd}

\multips(3.4,6.)(0.2,0){1}{\dotc}

\multips(3.6,6.)(0.2,0){1}{\dotd}

\multips(3.8,6.)(0.2,0){1}{\dotc}

\multips(4.,6.)(0.2,0){1}{\dotd}

\multips(4.2,6.)(0.2,0){1}{\dote}

\multips(4.4,6.)(0.2,0){2}{\dotd}

\multips(4.8,6.)(0.2,0){1}{\dotc}

\multips(5.,6.)(0.2,0){1}{\dote}

\multips(5.2,6.)(0.2,0){3}{\dotd}

\multips(5.8,6.)(0.2,0){1}{\dote}

\multips(6.,6.)(0.2,0){1}{\dotc}

\multips(6.2,6.)(0.2,0){2}{\dote}

\multips(6.6,6.)(0.2,0){4}{\dotd}

\multips(7.4,6.)(0.2,0){2}{\dotc}

\multips(7.8,6.)(0.2,0){7}{\dote}

\multips(9.2,6.)(0.2,0){15}{\dotd}

\multips(1.2,6.1)(0.2,0){8}{\dota}

\multips(2.8,6.1)(0.2,0){1}{\dotb}

\multips(3.,6.1)(0.2,0){1}{\dota}

\multips(3.2,6.1)(0.2,0){1}{\dotb}

\multips(3.4,6.1)(0.2,0){1}{\dotc}

\multips(3.6,6.1)(0.2,0){1}{\dotd}

\multips(3.8,6.1)(0.2,0){1}{\dotc}

\multips(4.,6.1)(0.2,0){1}{\dotd}

\multips(4.2,6.1)(0.2,0){1}{\dote}

\multips(4.4,6.1)(0.2,0){1}{\dotb}

\multips(4.6,6.1)(0.2,0){1}{\dotd}

\multips(4.8,6.1)(0.2,0){2}{\dotc}

\multips(5.2,6.1)(0.2,0){3}{\dotd}

\multips(5.8,6.1)(0.2,0){1}{\dotc}

\multips(6.,6.1)(0.2,0){3}{\dote}

\multips(6.6,6.1)(0.2,0){1}{\dotb}

\multips(6.8,6.1)(0.2,0){3}{\dotd}

\multips(7.4,6.1)(0.2,0){2}{\dotc}

\multips(7.8,6.1)(0.2,0){1}{\dote}

\multips(8.,6.1)(0.2,0){1}{\dotc}

\multips(8.2,6.1)(0.2,0){1}{\dote}

\multips(8.4,6.1)(0.2,0){1}{\dotc}

\multips(8.6,6.1)(0.2,0){3}{\dote}

\multips(9.2,6.1)(0.2,0){1}{\dotb}

\multips(9.4,6.1)(0.2,0){15}{\dotd}

\multips(1.2,6.2)(0.2,0){7}{\dota}

\multips(2.6,6.2)(0.2,0){1}{\dotb}

\multips(2.8,6.2)(0.2,0){1}{\dotc}

\multips(3.,6.2)(0.2,0){1}{\dota}

\multips(3.2,6.2)(0.2,0){3}{\dotc}

\multips(3.8,6.2)(0.2,0){1}{\dota}

\multips(4.,6.2)(0.2,0){1}{\dotd}

\multips(4.2,6.2)(0.2,0){1}{\dotc}

\multips(4.4,6.2)(0.2,0){1}{\dote}

\multips(4.6,6.2)(0.2,0){1}{\dotd}

\multips(4.8,6.2)(0.2,0){2}{\dotc}

\multips(5.2,6.2)(0.2,0){1}{\dote}

\multips(5.4,6.2)(0.2,0){2}{\dotd}

\multips(5.8,6.2)(0.2,0){2}{\dotc}

\multips(6.2,6.2)(0.2,0){3}{\dote}

\multips(6.8,6.2)(0.2,0){4}{\dotd}

\multips(7.6,6.2)(0.2,0){1}{\dota}

\multips(7.8,6.2)(0.2,0){8}{\dote}

\multips(9.4,6.2)(0.2,0){16}{\dotd}

\multips(1.2,6.3)(0.2,0){7}{\dota}

\multips(2.6,6.3)(0.2,0){1}{\dotb}

\multips(2.8,6.3)(0.2,0){1}{\dota}

\multips(3.,6.3)(0.2,0){1}{\dotd}

\multips(3.2,6.3)(0.2,0){1}{\dotc}

\multips(3.4,6.3)(0.2,0){1}{\dota}

\multips(3.6,6.3)(0.2,0){1}{\dotc}

\multips(3.8,6.3)(0.2,0){1}{\dota}

\multips(4.,6.3)(0.2,0){1}{\dotd}

\multips(4.2,6.3)(0.2,0){2}{\dotc}

\multips(4.6,6.3)(0.2,0){1}{\dotd}

\multips(4.8,6.3)(0.2,0){1}{\dota}

\multips(5.,6.3)(0.2,0){1}{\dotc}

\multips(5.2,6.3)(0.2,0){1}{\dote}

\multips(5.4,6.3)(0.2,0){2}{\dotd}

\multips(5.8,6.3)(0.2,0){2}{\dotc}

\multips(6.2,6.3)(0.2,0){3}{\dote}

\multips(6.8,6.3)(0.2,0){1}{\dotb}

\multips(7.,6.3)(0.2,0){4}{\dotd}

\multips(7.8,6.3)(0.2,0){1}{\dotc}

\multips(8.,6.3)(0.2,0){7}{\dote}

\multips(9.4,6.3)(0.2,0){17}{\dotd}

\multips(1.2,6.4)(0.2,0){7}{\dota}

\multips(2.6,6.4)(0.2,0){1}{\dotc}

\multips(2.8,6.4)(0.2,0){1}{\dota}

\multips(3.,6.4)(0.2,0){1}{\dotd}

\multips(3.2,6.4)(0.2,0){1}{\dotc}

\multips(3.4,6.4)(0.2,0){1}{\dota}

\multips(3.6,6.4)(0.2,0){1}{\dotc}

\multips(3.8,6.4)(0.2,0){2}{\dotd}

\multips(4.2,6.4)(0.2,0){2}{\dotc}

\multips(4.6,6.4)(0.2,0){1}{\dotd}

\multips(4.8,6.4)(0.2,0){1}{\dota}

\multips(5.,6.4)(0.2,0){1}{\dotc}

\multips(5.2,6.4)(0.2,0){1}{\dote}

\multips(5.4,6.4)(0.2,0){2}{\dotd}

\multips(5.8,6.4)(0.2,0){1}{\dotc}

\multips(6.,6.4)(0.2,0){1}{\dote}

\multips(6.2,6.4)(0.2,0){1}{\dotc}

\multips(6.4,6.4)(0.2,0){3}{\dote}

\multips(7.,6.4)(0.2,0){4}{\dotd}

\multips(7.8,6.4)(0.2,0){2}{\dotc}

\multips(8.2,6.4)(0.2,0){6}{\dote}

\multips(9.4,6.4)(0.2,0){18}{\dotd}

\multips(1.2,6.5)(0.2,0){8}{\dota}

\multips(2.8,6.5)(0.2,0){1}{\dotd}

\multips(3.,6.5)(0.2,0){1}{\dotc}

\multips(3.2,6.5)(0.2,0){1}{\dota}

\multips(3.4,6.5)(0.2,0){1}{\dotd}

\multips(3.6,6.5)(0.2,0){1}{\dotc}

\multips(3.8,6.5)(0.2,0){1}{\dotd}

\multips(4.,6.5)(0.2,0){1}{\dote}

\multips(4.2,6.5)(0.2,0){1}{\dota}

\multips(4.4,6.5)(0.2,0){1}{\dotc}

\multips(4.6,6.5)(0.2,0){3}{\dotd}

\multips(5.2,6.5)(0.2,0){2}{\dote}

\multips(5.6,6.5)(0.2,0){2}{\dotd}

\multips(6.,6.5)(0.2,0){3}{\dotc}

\multips(6.6,6.5)(0.2,0){2}{\dote}

\multips(7.,6.5)(0.2,0){4}{\dotd}

\multips(7.8,6.5)(0.2,0){2}{\dotc}

\multips(8.2,6.5)(0.2,0){1}{\dote}

\multips(8.4,6.5)(0.2,0){1}{\dotc}

\multips(8.6,6.5)(0.2,0){6}{\dote}

\multips(9.8,6.5)(0.2,0){17}{\dotd}

\end{pspicture}
\caption{$2 < p < q < 300$ }
\end{center}
\end{figure}

%*************************************

With \eqref{gpqbound}, we may translate the bounds on $g$ into bounds on $\nu_{pq}$. We can do better in the case when $\kappa + \lambda \geq p$, where, by Theorem \ref{T1:Main}, we have $g=\ga$.
\begin{theorem}\label{T6:main}
For primes $3<p<q$, with $\kappa + \lambda \geq p$ and $q \neq 2p-1,  3p-2$, we have
$$\nu_{pq} = \ga - pq + 1.$$
\end{theorem}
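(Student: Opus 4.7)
The lower bound $\nu_{pq}\geq \ga-pq+1$ is immediate from~\eqref{gpqbound} together with Theorem~\ref{T1:Main}(ii), since the hypothesis $\kappa+\lambda\geq p$ forces $g=\ga$. The real content is the matching upper bound, i.e.\ showing that every integer $G\geq \ga-pq+2$ arises as the genus of some compact Riemann surface carrying a cyclic automorphism of order~$pq$.

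My plan is to translate this into a representability question via Riemann--Hurwitz and Harvey's theorem. If $\mathbb{Z}_{pq}$ acts on a surface of genus $G$ with quotient of genus $h$ and with $y,z,w$ branch points of local orders $p,q,pq$ respectively, then
$$
G+pq-1 \;=\; h\,d_0+y\,d_1+z\,d_2+w\,d_3\;=\;\fr(h,y,z,w),
$$
and such an action exists precisely when the signature $(h;\,p^{y},q^{z},(pq)^{w})$ passes Harvey's criteria. For the odd order $pq$ these criteria boil down to a short list of \emph{admissible} signatures: $(y,z,w)=(0,0,0)$ with $h\geq 1$; $w\geq 2$ (any $y,z,h\geq 0$); $w=1$ with $y,z\geq 1$; $w=0$ with $y,z\geq 2$; or the ``pure'' cases $y\geq 2,\,z=w=0$ or $z\geq 2,\,y=w=0$ with $h\geq 1$. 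Every other signature, notably any one with a single branch point, is forbidden.

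Setting $N=G+pq-1$, the task becomes: every $N\geq \ga+1$ should admit at least one nonnegative representation $N=\fr(h,y,z,w)$ whose signature is admissible. Since incrementing $h$ by $1$ in any representation preserves admissibility, the problem reduces to a finite check -- one residue class mod $pq$ at a time -- on the minimal $N$ in each class lying in $[\ga+1,\,\ga+pq]$. To produce candidate representations I would start from the near-representation $\fr(p'-1,p-1,\kappa,-1)=\ga$ and add $j\in\{1,\dots,pq\}$, using the exchange identities
$$
d_0=2d_3+1,\qquad 2d_1+q=d_0,\qquad d_1+d_2=d_3+2p'q'
$$
to trade the illegal $-d_3$ term for combinations of $d_0,d_1,d_2$ and thereby steer the representation into the family $w\geq 2$ or $(w=0,\,y,z\geq 2)$. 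The decomposition of the increment $j$ in terms of the extended Euclidean data $(\kappa,\lambda,\kappa',\lambda')$ tells one, residue by residue, which admissible quadruples $(h,y,z,w)$ are attainable.

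The main obstacle will be the critical residue $N=\ga+pq$. Any representation of it must have $h=0$, because $h\geq 1$ would yield a nonnegative representation of $\ga$, contradicting $g=\ga$. One is thus forced to solve $\fr(0,y,z,w)=\ga+pq$ with $(y,z,w)$ in one of the admissible families with $h=0$, i.e.\ $w\geq 2$, or $w=1$ with $y,z\geq 1$, or $w=0$ with $y,z\geq 2$. A direct enumeration on the region $\kappa+\lambda\geq p$, organised by $(\kappa,\lambda,\kappa',\lambda')$, will show that such a valid $(y,z,w)$ exists \emph{except} precisely when $(\kappa,\lambda)=(1,p-1)$ or $(\kappa,\lambda)=(2,p-2)$, i.e.\ when $q=2p-1$ or $q=3p-2$. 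In those two excluded boundary cases, every nonnegative representation of $\ga+pq$ either has a single branch point or violates Harvey's lcm condition, so $\nu_{pq}$ genuinely exceeds $\ga-pq+1$. Ruling out those two prime patterns, as the hypothesis does, therefore closes the argument and yields $\nu_{pq}=\ga-pq+1$.
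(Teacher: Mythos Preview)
Your overall strategy is sound and indeed parallel to the paper's: reduce to finding admissible representations for a full set of residues above $\ga$, then propagate by adding a multiple of one of the $d_i$. The paper does this modulo $d_1$ (using a ``strongly admissible'' condition so that increasing $y$ preserves admissibility), while you propose working modulo $d_0=pq$ and increasing $h$. Either modulus works in principle.

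However, the proposal has a concrete error in its last paragraph, and this error matters. You assert that for $q=2p-1$ and $q=3p-2$ the residue $N=\ga+pq$ has no admissible representation with $h=0$. This is false. Take $q=2p-1$ (so $\kappa=1$). Subtracting the null vector $e_1=(p',0,1,-p)$ from $(p',p-1,1,-1)$ yields the nonnegative quadruple $(0,\,p-1,\,0,\,p-1)$, and one checks directly that $y+w=2(p-1)\neq 1$, $z+w=p-1\neq 1$, $x+y+w\neq 0$, $x+z+w\neq 0$: it is admissible. For $q=3p-2$ (so $\kappa=2$) the same subtraction gives $(0,\,p-1,\,1,\,p-1)$, again admissible. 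So the obstacle for the excluded primes is \emph{not} at $\ga+pq$, and your heuristic that this single residue is the ``main obstacle'' is not reliable.

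In the paper the inadmissible quadruples are discovered elsewhere: they arise when the explicit representations built in Section~\ref{S:representability>g0} (via Lemmas~\ref{L:A}, \ref{L:B}, \ref{L:C}) happen to satisfy $z+w=1$. This forces a handful of specific $(a,b,c)$-triples, and the paper then repairs each one by adding a suitable $e(u,v)$. The repair $e(0,0)$ fails precisely when $q'=2p'$ (i.e.\ $q=2p-1$), and $e(0,1)$ fails precisely when $q'=3p'$ (i.e.\ $q=3p-2$); that is where the two exclusions actually come from. None of this is visible from looking only at $\ga+pq$.

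So to turn your plan into a proof you must carry out the residue-by-residue check in full, not just at the top of the interval. The most efficient route is the paper's: recycle the quadruples already built in the proof of Proposition~\ref{P:1}, verify which of them fail admissibility, and show each failure can be fixed except in the two excluded cases. Your exchange identities are essentially the vectors $e_0,e_1,e_2$, so you are not far from this, but the enumeration has to be done rather than promised.
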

\noindent Thus, $\nu_{pq}$ attains the lower bound of \eqref{gpqbound} in this case.  For a twin prime pair, $\nu_{pq}$ lies about halfway between the bounds of \eqref{gpqbound} (see Theorem~\ref{T:twingroup}).  It appears that the upper bound is not attained for any prime pair.

 The  Type I pairs not covered by  Theorem~\ref{T5:Main}  (white in the Figure 1) are those for which $p> \kappa + \lambda >p-\lambda$  (see Remark~\ref{R:white}, Section~\ref{S:repg1}).  We plan to treat these pairs in a future paper.  For now, we note that the formula for the Frobenius number $g_{pq}$ depends on the  number theoretic relationship between $q/p$ and $q'/p'$.   Making this dependence precise involves the continued fraction
\begin{equation}\label{contfrac}
\frac{q}{p}=q_1 + \frac{1}{\displaystyle q_2 + \frac{1}{\displaystyle \ddots  \frac{1}{\displaystyle q_{n-1} + \frac{1}{\displaystyle q_n}}}}.
\end{equation}
The condition $\kappa + \lambda \geq p$, appearing in Theorem \ref{T1:Main}(ii),  is equivalent to $q_1+1 \leq q'/p'$. It appears that the next case is
$$
q_1+\frac{1}{q_2} \leq \frac{q'}{p'} <q_1+1
$$
and that an exact, though more complicated, formula for $g$ is also possible in this case. It seems likely that $g_{pq}$ depends on where $q'/p'$ lies in relation to the convergents of \eqref{contfrac}.

 \section{The motivating problems}\label{S:motivation}

     If a compact Riemann surface $X$ admits a finite group $G$ of  conformal  {\it automorphisms}, the quotient space $Y=X/G$ is itself a compact Riemann surface, and the quotient map $\Phi:X \rightarrow Y$ is  a holomorphic {\it branched covering map\,} of degree $n=|G|$ (the order of $G$).  This means that $\Phi$  is generically $n$-to-$1$ (or $n$-fold), but there is a finite subset $B \subset Y$, called the branch set,  over which the fibers have cardinality strictly less than $n$.   The {\it Riemann-Hurwitz relation}, a linear Diophantine equation, relates the topological data associated with  $\Phi$, namely, the genera of the surfaces, the degree of the covering, and the cardinalities of the fibers over the branch set.  It is a generalization of the multiplicative relation  between the Euler characteristics of the surfaces, $\chi(X) = n\cdot \chi(Y)$ which holds for $n$-fold {\it unbranched} covering maps.  (See \cite{FK}, Sections I.1 and I.2, for a fuller treatment of these ideas.)

  Branched covering maps need not arise as quotient maps of group actions.  Those that do must satisfy an extra regularity condition:  for every $y \in B$, there exists a divisor $n_y >1$ of $n$ such that the fiber over $y$ consists of precisely $n/n_y$ points, at  which the $n$ sheets of the covering come together in sets of $n_y$.   The integers $n_y$, $y \in B$, are called the {\it branching indices}, and the covering is called {\it semi-regular}.    When $\Phi: X \rightarrow Y$ is a semi-regular branched covering, the Riemann-Hurwitz relation is
  \begin{equation}\label{E:RH1}
  2(\gamma -1) =  2 n (\eta -1) + n \sum_{y\in B}\left(1-\frac{1}{n_y}\right),
  \end{equation}
  where $\gamma,\eta$, are the genera of $X$, $Y$, respectively, $n$ is the degree of the covering, and  $n_y$, $y \in B$ are the branching indices.  If $\Phi$ can be realized as the quotient map of a group action, the covering is called {\it regular}.

  We now specialize to the case where $n$, the degree of the covering,  is  a square-free odd integer with $s\geq 1$ distinct prime factors $p_i$, $i=1,2, \dots s$.  The  $2^s$ divisors of $n$ are  in one-to-one correspondence with the set  ${\cal B}$ of binary bit strings of length $s$.  Let $I$ denote a bit string of length $s$, and $\overline 0$, $\overline 1$  the bit strings consisting of all $0$'s, and all $1$'s,  respectively.  Let $n_I$ denote the divisor of $n$ associated with the bit string $I$, so that, for example, $n_{\overline 0}=1$ and $n_{\overline 1}=n$, and, more generally, $p_i$ is a factor of $n_I$ if and only if the $i$th bit of $I$ is $1$.  Then \eqref{E:RH1} implies the  Riemann-Hurwitz formula for an $n$-fold semi-regular branched covering is
  \begin{equation}\label{E:RHbinary}
  \gamma+n-1 = x_{\overline 0} n +  \sum_{I \ne \overline 0}x_I\frac{n(n_I - 1)}{2n_I},
  \end{equation}
 where $x_I$ ($I \neq \overline 0$)  is the number of points in the branch set with branching index $n_I$, and $x_{\overline 0} = \eta$, the genus of $Y$.  The integers
 $$d_{\overline 0}=n, \qquad d_I = \frac{n(n_I -1)}{2n_I},\quad  I \in {\cal B}, I \ne {\overline 0} $$
 have gcd $=1$, so there is a $2^s$-dimensional Frobenius number $g(\{d_I: I \in {\cal B}\})$.      By the general theory of branched coverings, there is surface $X$ of genus $\gamma$ which is an $n$-fold semi-regular covering  if and only if there is a $2^s$-tuple $(x_I)_{ I \in \cal B}$ of nonnegative integers satisfying \eqref{E:RHbinary}.   It follows that there is a largest {\it non-genus} of a  semi-regular $n$-fold covering, namely, the additive translate $-n+1+ g(\{d_I: I \in {\cal B}\})$ of the $2^s$-dimensional Frobenius number $g(\{d_I: I \in {\cal B}\})$.

 \vskip 3mm
 \noindent
 {\bf Problem I:} For every square-free odd $n$ with $s$ distinct prime factors, determine the largest non-genus of a semi-regular $n$-fold covering. This genus is $g_n - n+1$ where $g_n$ is the $2^s$-dimensional Frobenius number $g(\{d_I: I \in {\cal B}\})$.

 \vskip 3mm

   The case $s=1$ of Problem I follows immediately  from the previous paragraph and Sylvester's formula \eqref{E:sylvester} for the $2$-dimensional Frobenius number.
    \begin{prop}\label{P:p}  Let $p$ be an odd prime.   The largest non-genus of a  $p$-fold semi-regular branched covering is $p'(p-3) -p$, the additive translate $g(\{p,p'\})-p+1$ of the two dimensional Frobenius number $g(\{p, p'\})$.
             \end{prop}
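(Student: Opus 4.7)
The proposition is the $s=1$ specialization of the framework set up just above it, so my plan is to simply unwind the definitions and invoke Sylvester's formula \eqref{E:sylvester}.

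First I would identify the Frobenius data. When $s = 1$ and $n = p$, the set $\mathcal{B}$ consists of the two bit strings $\overline{0}$ and $\overline{1}$, corresponding to the two divisors $n_{\overline{0}} = 1$ and $n_{\overline{1}} = p$. Substituting into the definitions of the $d_I$ gives $d_{\overline{0}} = n = p$ and
\[
d_{\overline{1}} = \frac{n(n_{\overline{1}} - 1)}{2\, n_{\overline{1}}} = \frac{p(p-1)}{2p} = \frac{p-1}{2} = p'.
\]
Since $p$ is an odd prime and $p' < p$, we have $\gcd(p, p') = 1$, so the two-dimensional Frobenius number $g(\{p, p'\})$ is defined, and \eqref{E:RHbinary} specializes to $\gamma + p - 1 = x_{\overline{0}}\, p + x_{\overline{1}}\, p'$.

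Second I would apply Sylvester and simplify. By the existence statement quoted at the end of the preceding paragraph, a compact Riemann surface of genus $\gamma$ is a $p$-fold semi-regular covering of some surface if and only if $\gamma + p - 1$ is representable by the pair $\{p, p'\}$; hence the largest non-genus is exactly $g(\{p, p'\}) - p + 1$. Sylvester's formula \eqref{E:sylvester} gives $g(\{p, p'\}) = pp' - p - p'$, so the largest non-genus equals
\[
pp' - p - p' - p + 1 = pp' - 2p - p' + 1.
\]
Using the identity $2p' = p - 1$ to replace $-2p + 1 = -2p' - p + 2$ (or directly by checking $pp'-2p-p'+1 - (p'(p-3)-p) = 2p' - p + 1 = 0$), this rearranges to $p'(p-3) - p$, as asserted.

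There is essentially no obstacle here; the argument is a one-line specialization of the general framework together with Sylvester's classical formula. Any real content is buried in the background theory already invoked, namely that every nonnegative solution of \eqref{E:RHbinary} is actually realized by a genuine branched covering of Riemann surfaces.
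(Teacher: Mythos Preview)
Your proposal is correct and is exactly the argument the paper intends: the paper itself gives no detailed proof, merely noting that the $s=1$ case ``follows immediately from the previous paragraph and Sylvester's formula,'' and your write-up spells this out faithfully. The identification $d_{\overline 0}=p$, $d_{\overline 1}=p'$, the application of Sylvester to get $g(\{p,p'\})=pp'-p-p'$, and the arithmetic check that $g(\{p,p'\})-p+1=p'(p-3)-p$ are all correct.
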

 \noindent Note that this integer is $<0$ for $p=3, 5$, so that there is a semi-regular $3$- or $5$- fold branched covering of every genus.

 \subsection{Group actions}\label{ss:groups}  We now give a set of necessary and sufficient conditions  for the existence of a {\it regular} {\it cyclic} $n$-fold  branched covering $\Phi: X \rightarrow Y$,   that is,  a covering realizable as the quotient map of a cyclic group ${\mathbb Z}_n$ of automorphisms acting on the compact Riemann surface $X$.  The conditions are a special case of a more general set of conditions for the existence of an action by an arbitrary finite group $G$ of order $n$.  The necessary and sufficient condition in the general case is the existence of a {\it partial monodromy presentation} of $G$, having a  form  dictated by the genus of $Y$ and the branching indices.  If the genus of $Y$ is $\eta$, and the branching indices are $r_1, r_2, \dots, r_k$, the monodromy presentation of $G$ must have $2\eta + k$ generators $a_1, b_1, \dots a_\eta, b_\eta, c_1, \dots, c_k,$   where $c_i$ has order $r_i$ and, among other possible relations,
  \begin{equation}\label{E:longrelation}
  \prod_{i=1}^{\eta}[a_i,b_i]\prod_{j=1}^k c_j = 1,
  \end{equation}
where $[a_i,b_i]$ denotes the commutator and $1$ denotes the identity element in $G$.
(For a fuller explanation of the general case, see, e.g., \cite{B01}, or  \cite[Chapter III, Section 3]{RM91},  or \cite[Section 1.7]{G77}.)

  \begin{lemma}\label{L:admissibility} Let $n=p_1\dots p_s$, $s \geq 1$,  a square-free odd integer with prime factors $p_1, \dots, p_s$.  Let $\gamma \geq 0$.  Let $(x_I \geq 0)_{I \in {\cal B}}$ be a $2^s$-tuple satisfying \eqref{E:RHbinary} for $n$, $\gamma$.  There is a compact Riemann surface of genus $\gamma$ admitting a group of automorphisms ${\mathbb Z}_n$ such that quotient surface has genus $x_{\overline 0}$ and $x_I$ points of branching indices $n_I$, $I \neq \overline 0$, if and only if  the tuple $(x_I \geq 0)_{I \in {\cal B}}$ satisfies the admissibility conditions:
  \begin{align}
  \sum_{I \in {\cal B}^i} x_I &\neq 1, \quad i=1,2, \dots, s; \label{E:bincond1}\\
  x_{\overline 0} + \sum_{I \in {\cal B}^i}x_I & \neq 0, \quad i=1,2, \dots, s, \label{E:bincond2}
  \end{align}
 where ${\cal B}^i \subset {\cal B}$ is the set of bit strings of length $s$ whose $i$th bit is $1$.
  \end{lemma}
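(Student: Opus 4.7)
The plan is to reduce the geometric existence question to a group-theoretic one via Fuchsian uniformization and then decouple the resulting conditions using the Chinese Remainder Theorem. First, I invoke the standard surface-kernel epimorphism theorem (see, e.g., \cite[Chapter III, Section 3]{RM91} or \cite[Section 1.7]{G77}): a compact Riemann surface of genus $\gamma$ with a $\mathbb{Z}_n$-action realizing the prescribed quotient data exists if and only if there is an order-preserving epimorphism from the Fuchsian group of signature $(x_{\overline 0}; r_1,\ldots,r_k)$ onto $\mathbb{Z}_n$, where $r_1,\ldots,r_k$ are the branching indices (so that each value $n_I$ occurs $x_I$ times among the $r_j$). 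Concretely, this means choosing elements $c_j \in \mathbb{Z}_n$ of exact order $r_j$ and elements $a_l,b_l \in \mathbb{Z}_n$ (for $l=1,\ldots,x_{\overline 0}$) whose combined image generates $\mathbb{Z}_n$ and satisfies \eqref{E:longrelation}. Since $\mathbb{Z}_n$ is abelian the commutators vanish, and \eqref{E:longrelation} collapses to $\sum_{j=1}^{k} c_j = 0$ in $\mathbb{Z}_n$.

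Next, since $n = p_1\cdots p_s$ is squarefree, the CRT isomorphism $\mathbb{Z}_n \cong \bigoplus_{i=1}^{s} \mathbb{Z}_{p_i}$ shows that an element has exact order $n_I$ if and only if its $\mathbb{Z}_{p_i}$-component is nonzero exactly for those $i$ with $p_i \mid n_I$, i.e., for those $i$ with $I \in \mathcal{B}^i$. The problem therefore separates into $s$ independent subproblems, one for each prime $p_i$: choose $N_i := \sum_{I\in\mathcal{B}^i}x_I$ nonzero elements of $\mathbb{Z}_{p_i}$ summing to $0$, and arrange (using the $a_l,b_l$ if needed) that $\mathbb{Z}_{p_i}$ is generated.

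For necessity, a single nonzero element of $\mathbb{Z}_{p_i}$ cannot sum to $0$, which gives \eqref{E:bincond1}. If $x_{\overline 0}=0$ there are no $a_l,b_l$, so the $c_j$'s alone must generate $\mathbb{Z}_n$; this is impossible modulo $p_i$ unless $N_i \geq 1$, giving \eqref{E:bincond2}. For sufficiency, assume both conditions. When $N_i \geq 2$ (forced by \eqref{E:bincond1} whenever $N_i \geq 1$), set the first $N_i-1$ relevant $\mathbb{Z}_{p_i}$-components to $1$ and the last to $-(N_i-1) \bmod p_i$; in the edge case $p_i \mid N_i-1$, bump one of the $1$'s to $2$, which forces the final component to be $-N_i \not\equiv 0 \pmod{p_i}$ (since $p_i$ cannot divide both $N_i-1$ and $N_i$), using crucially that $p_i$ is odd. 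When $N_i=0$, condition \eqref{E:bincond2} furnishes $x_{\overline 0}\geq 1$, and we set the $\mathbb{Z}_{p_i}$-component of $a_1$ to $1$ and those of all other $a_l,b_l$ to $0$ to supply the missing generator. Reassembling the components via CRT produces the required $c_j$'s and $a_l,b_l$'s.

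The main obstacle I anticipate is juggling the two simultaneous constraints---prescribed support pattern of nonzero components, and prescribed sum---within each $\mathbb{Z}_{p_i}$; the CRT decomposition isolates these into independent elementary problems, and the oddness of each $p_i$ ($\geq 3$) supplies just enough slack to ensure a valid assignment always exists. Once this group-theoretic bookkeeping is complete, the Harvey/surface-kernel machinery delivers the Riemann surface.
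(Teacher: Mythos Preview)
Your proposal is correct and follows essentially the same route as the paper: both invoke the partial monodromy (surface-kernel) presentation, note that the commutators in \eqref{E:longrelation} vanish since $\mathbb{Z}_n$ is abelian, and derive necessity by the same divisibility argument. The paper leaves sufficiency as an exercise, which you have carried out explicitly and correctly via the CRT decomposition $\mathbb{Z}_n \cong \bigoplus_i \mathbb{Z}_{p_i}$ and the elementary construction of nonzero summands adding to zero in each $\mathbb{Z}_{p_i}$.
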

 \begin{proof}   A partial monodromy presentation of ${\mathbb Z}_n$ dictated by the tuple $(x_I \geq 0)_{I \in {\cal B}}$ would have $2x_{\overline 0}$ generators $a_0, b_0, \dots, a_{x_{\overline 0}}, b_{x_{\overline 0}}$ of unspecified order, and
 $x_I$ generators $c_I$ of order $n_I$ for each $I \in {\cal B}$, $I \neq \overline 0$.  Since all commutators are trivial in an abelian group, the elements $a_j, b_j$ can be omitted from the relation \eqref{E:longrelation}.    If the $i$th condition in \eqref{E:bincond1} fails, the group product on the left-hand side of \eqref{E:longrelation} would contain exactly one  element of order divisible by $p_i$, and hence could not be equal to the identity. If the $i$th condition in \eqref{E:bincond2} fails,  the generating set would contain no elements of order divisible by $p_i$,  a contradiction.   This proves the necessity of the conditions.  To prove sufficiency of the conditions, one verifies that a partial monodromy presentation of ${\mathbb Z}_{n}$ can be constructed in all other cases; this is left as an exercise.
\end{proof}

If there exist tuples $(x_I \geq 0)_{I \in {\cal B}}$ satisfying \eqref{E:RHbinary} for some $\gamma$, $n$,  but none of them satisfy  all the admissibility conditions in \eqref{E:bincond1} and \eqref{E:bincond2}, then $\gamma$ is the genus of an $n$-fold semi-regular covering, but a {\it non-genus} for a ${\mathbb Z}_n$ action.    There exists a largest non-genus of a ${\mathbb Z}_n$ action \cite{K87}, and it must be at least as large as the largest non-genus of an $n$-fold semi-regular covering.

\vskip 3mm
 \noindent
 {\bf Problem II:} For every square-free odd $n$, determine the largest non-genus $\nu_n$ of ${\mathbb Z}_n$.

 \vskip 3mm

\begin{theorem} \label{gnnn} Let $n$ be a square-free odd integer with $s\geq 1$ distinct prime factors.  With $\nu_n$ denoting the largest non-genus of a ${\mathbb Z}_n$ action, and  $g_n$  the Frobenius number $g(\{d_I: I \in {\cal B}\})$, we have
\begin{equation}\label{E:groupineq}
g_n -n+1 \leq \nu_n \leq g_n.
\end{equation}
\end{theorem}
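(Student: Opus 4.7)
The plan is to establish each inequality in \eqref{E:groupineq} separately; both follow quickly from the definitions together with Lemma~\ref{L:admissibility}, with one small trick needed for the upper bound. First note that $d_{\overline 0}=n$ and $d_{\overline 1}=(n-1)/2$ are coprime (since $n$ is odd), so $\gcd\{d_I:I\in\mathcal{B}\}=1$ and $g_n$ is well-defined.

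For the lower bound $g_n-n+1\leq\nu_n$, I would argue directly from the definition of the Frobenius number. Set $\gamma:=g_n-n+1$; then $\gamma+n-1=g_n$ is, by definition, not representable as $\sum_{I\in\mathcal{B}}x_I d_I$ with $x_I\geq 0$. Hence \eqref{E:RHbinary} has no solution in nonnegative integers for this $\gamma$, so no compact Riemann surface of genus $\gamma$ is even a semi-regular $n$-fold covering; in particular none admits a $\mathbb{Z}_n$-action, so $\gamma$ is a non-genus for $\mathbb{Z}_n$ and $\nu_n\geq\gamma$. (The inequality is vacuous when $g_n-n+1<0$.)

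For the upper bound $\nu_n\leq g_n$, the key observation is that the bit string $\overline 1$ has all $s$ bits equal to $1$, so $\overline 1\in\mathcal{B}^i$ for every $i=1,\dots,s$; consequently any tuple with $x_{\overline 1}\geq 2$ automatically satisfies both admissibility conditions \eqref{E:bincond1} and \eqref{E:bincond2} for every $i$. I would exploit this as follows. Let $\gamma>g_n$, put $N=\gamma+n-1\geq g_n+n$, and note that
$$N-2d_{\overline 1}\;=\;N-(n-1)\;\geq\; g_n+1\;>\;g_n,$$
so $N-2d_{\overline 1}$ is representable: write $N-2d_{\overline 1}=\sum_{I\in\mathcal{B}}y_I d_I$ with $y_I\geq 0$. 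Defining $x_{\overline 1}:=y_{\overline 1}+2$ and $x_I:=y_I$ for $I\neq\overline 1$, we get a nonnegative tuple satisfying \eqref{E:RHbinary} with $x_{\overline 1}\geq 2$, hence $\sum_{I\in\mathcal{B}^i}x_I\geq 2$ for each $i$. By Lemma~\ref{L:admissibility} there is a compact Riemann surface of genus $\gamma$ admitting a $\mathbb{Z}_n$-action, so $\gamma$ is a genus of $\mathbb{Z}_n$. Since $\gamma>g_n$ was arbitrary, $\nu_n\leq g_n$.

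There is no substantive obstacle; the whole argument is essentially bookkeeping once one sees that padding the coefficient of the unique ``full'' index $\overline 1$ by $2$ simultaneously repairs every potential admissibility failure, and that this padding costs only $2d_{\overline 1}=n-1$, which is exactly the slack afforded by the hypothesis $\gamma>g_n$.
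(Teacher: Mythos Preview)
Your proof is correct and is essentially the same as the paper's: both establish the lower bound directly from the definition of $g_n$, and both obtain the upper bound by observing that increasing $x_{\overline 1}$ by $2$ (at a cost of $2d_{\overline 1}=n-1$) forces all the admissibility conditions in Lemma~\ref{L:admissibility} to hold. The only cosmetic difference is that the paper phrases the upper bound as ``semi-regular cover in genus $\gamma$ $\Rightarrow$ $\mathbb{Z}_n$-action in genus $\gamma+n-1$'', whereas you start from an arbitrary $\gamma>g_n$ and subtract $2d_{\overline 1}$ first; the underlying mechanism is identical.
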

 \begin{proof}  The left-hand inequality is clear: $\gamma =g_n - n+1$ is the largest integer such no $2^s$-tuple  $(x_I)_{I \in{\cal B}}$ (admissible or not) satisfies \eqref{E:RHbinary}.  Hence $\nu_n$ must be at least as large as $g_n-n+1$.  For the right-hand inequality, let $(x_I)_{I \in {\cal B}}$ be a nonnegative $2^s$-tuple satisfying  \eqref{E:RHbinary} for some $\gamma$.   The tuple obtained from $(x_I)$ by replacing the final coordinate $x_{\overline 1}$ with $x_{\overline 1} + 2$ satisfies \eqref{E:RHbinary} with $\gamma$ replaced by $\gamma + n-1$.  Moreover, the new tuple satisfies the admissibility conditions \eqref{E:bincond1} and \eqref{E:bincond2}.  Thus, if there is a surface of genus $\gamma$ which is an $n$-fold semi-regular covering, there is a surface of genus $\gamma + n-1$ which admits a ${\mathbb Z}_n$ action.  Consequently, $\nu_n$ is no larger than $g_n-n+1 + n-1 = g_n$.
 \end{proof}
  In the case $s=1$, the Riemann-Hurwtiz relation is
\begin{equation}\label{E:RHs=1}
\gamma + p-1 = x_0p + x_1p'
\end{equation}
 and the admissibility conditions are  simply $x_0 + x_1 \neq 0$ and $x_1 \neq 1$.   It is easy to verify that  there is just one solution of \eqref{E:RHs=1} when  $\gamma =g(\{p,p'\})$, namely,  the inadmissible pair $(x_0, x_1) = (p'-1,1)$.  Thus the largest non-genus of a ${\mathbb Z}_p$ action  is  strictly greater than the largest non-genus of a semi-regular $p$-fold covering (cf. Proposition~\ref{P:p}).  In fact it is known (\cite{KM91}) that $\nu_p =g(\{p,p'\}) = g_p$.  This shows that the upper bound in \eqref{E:groupineq} can be attained.  We conjecture that $s=1$ is the only case in which this occurs.    We shall show in Section~\ref{S:surfaces} that when $s=2$, the lower bound in \eqref{E:groupineq} is attained for infinitely many $n=pq$.

 A group of square-free order is either cyclic or metacyclic (see, e.g., \cite{Hall}, Theorem~9.4.3).  A metacyclic group has a normal cyclic subgroup with a cyclic factor group.  If $s=1$, the only possible group is ${\mathbb Z}_p$. If $s=2$, there is  a (nonabelian) metacyclic group (of order $pq$) if and only if $p$ is a divsor of $q-1$.  Such a group contains no elements of order $pq$,  hence the quotient map  has no branching indices equal to $pq$, and   the corresponding Frobenius problem is $3$-, not $4$-dimensional.  The admissibility conditions for a partial monodromy presentation are (naturally) different.  A formula for the largest non-genus of a metacyclic group action of order $pq$ is given by the second author in \cite{W01}.  In section \ref{S:surfaces} we give a formula for the largest non-genus of ${\mathbb Z}_{pq}$ which, given $p$, is valid for all but finitely many $q>p$.

 \vskip 3mm
Henceforth we treat  Problems I and II exclusively for $n$ a product of two distinct primes. Until the last section, we revert to the purely number theoretic question of determining the  $4$-dimensional Frobenius number $g=g(\{d_0, d_1, d_2, d_3\})$, with the $d_i$ as defined at \eqref{E:d's}.

\section{Representability of integers $>\ga$}\label{S:representability>g0}

To prove that a certain integer $m$ is the Frobenius number $g$, we need to establish  that (a) $m$ is not representable as $\fr(x,y,z,w)$ for any  quadruple $(x, y, z, w)$ of nonnegative integers; and (b)  all integers $>m$ are representable in this way.  For (b), it suffices to show that all integers in the closed interval $[m+1, m+ d_1]$ are representable, since if $\fr(x,y,z,w)$ is a nonnegative representation of $k \in [m+1, m+d_1]$, then  $ \fr(x,y+l, z,w)$ is a nonnegative representation of $k + ld_1$, for any $l \geq 0$.   In this and subsequent sections we apply this method to $m=\ga, \gb$ and $ \gc$, as they are defined at \eqref{E:keynums}.  Having applied the method to $\ga$, it will be possible to reuse much of the work in the treatment of $\gb$ and $\gc$.

For $x,y,z,w \in \mathbb Q$,  the equation $\fr(x,y,z,w)=0$ determines a three dimensional vector subspace of ${\mathbb Q}^4$, whose span is the hyperplane orthogonal to the
vector $(d_0, d_1, d_2, d_3)$.  It has an obvious basis consisting of the three vectors
$$(d_1, -d_0, 0,0), \quad (0, d_2, -d_1, 0), \quad (0,0, d_3, -d_2).$$
It is easy to show that
\begin{align}
e_0 & = (p', -p, 0, 0)  \label{E:trivequiv} \\
e_1 & = (p', 0, 1, -p)  \label{E:fact0p} \\
e_2 & = (q', 1, 0, -q), \label{E:fact0q}
\end{align}
is also a basis.  This basis is convenient  since (an exercise shows) if  there are integer quadruples $(x,y, z,w)$ and $(x',y',z',w')$ such that $\fr(x,y,z,w) = \fr(x',y',z',w')$, then the  vector $(x-x', y-y', z-z', w-w')$ is an {\it integer}  linear combination of $e_0$, $e_1$ and $e_2$.   Thus,  since $\fr$ is linear, $\fr(x,y,z,w) = \fr(x',y',z',w')$ if and only if $(x',y',z',w') = (x,y,z,w) +\alpha e_1 + \beta e_2 + \gamma e_3$, for some $\alpha,\beta,\gamma \in {\mathbb Z}$.

\begin{prop}\label{P:1} All integers $> \ga$ are representable.
\end{prop}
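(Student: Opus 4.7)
The strategy is dictated by the opening paragraph of this section: since representability is preserved under adding any multiple of $d_1$, it suffices to exhibit a nonnegative representation $\fr(x,y,z,w)$ for each of the $d_1 = p'q$ consecutive integers $\ga+1, \ga+2, \ldots, \ga+d_1$.

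The natural starting point is $\ga = \fr(p'-1, p-1, \kappa, -1)$, whose only obstruction to nonnegativity is the coordinate $w = -1$. For each $j \in [1, d_1]$, I would seek an integer perturbation $(\Delta x, \Delta y, \Delta z, \Delta w)$ satisfying
\begin{equation*}
\fr(\Delta x, \Delta y, \Delta z, \Delta w) = j, \qquad \Delta x \geq 1-p', \quad \Delta y \geq 1-p, \quad \Delta z \geq -\kappa, \quad \Delta w \geq 1.
\end{equation*}
The trading rules I would use are the kernel identities $p\, d_3 = p'\, d_0 + d_2$ and $q\, d_3 = q'\, d_0 + d_1$ (from $e_1, e_2$), together with the simple relations $d_0 - d_2 = p$, $d_0 - d_1 = q$, $d_3 - d_2 = p'$, $d_3 - d_1 = q'$. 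Note in particular that $d_3 = d_1 + q' > d_1$, so the most naive choice $\Delta w = 1$ alone cannot absorb $j - d_3 < 0$; at least one of $\Delta x, \Delta y, \Delta z$ must also shift downward within its allowed range.

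My plan is to parameterize $j \in [1, d_1]$ by writing $j = rq + s$ with $0 \leq r < p'$ and $0 \leq s \leq q - 1$ (plus one boundary case $j = d_1$), and then produce explicit nonnegative representations of $\ga + j$ using the trading rules to distribute $r$ and $s$ among the coordinates. Applying $q\, d_3 = q'\, d_0 + d_1$ trades a block of $q$ in the $w$-coordinate for $+1$ in the $y$-coordinate and $+q'$ in the $x$-coordinate; the residual $s \in [0, q-1]$ is then absorbed by a further application of $e_1$, which trades $w$ against $z$. For the boundary value $j=d_1$, an explicit construction like $\ga+d_1 = \fr\bigl((p-3)/2,\,0,\,\kappa-1,\,p-1\bigr)$ (obtained by adding $e_0 - e_1$ to the shift of the starting tuple) handles it directly. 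The main obstacle is verifying that the resulting coordinates never violate their lower bounds—i.e., that the ``headroom'' $p'-1$, $p-1$, $\kappa$ on the starting tuple suffices across the entire $(r,s)$ grid. This will require a short case analysis, with the most delicate cases being those near the corners of the parameter rectangle, where the required downward adjustments are largest and $\kappa$ (which is as small as $1$ when $q<2p$) provides the least slack.
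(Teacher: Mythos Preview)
Your overall strategy (cover an interval of length $d_1$) matches the paper's, but the execution has real problems.

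First, two of your ``simple relations'' are false: $d_0-d_2 = pq - pq' = p(q+1)/2$, not $p$; and $d_0-d_1 = q(p+1)/2$, not $q$. The relations $d_3-d_2=p'$ and $d_3-d_1=q'$ are correct (these are the paper's \eqref{E:factp'}, \eqref{E:factq'}), but losing the other two undermines your trading scheme.

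Second, your plan for the residue $s\in[0,q-1]$ is not a plan. You say it is ``absorbed by a further application of $e_1$,'' but $e_1$ is a kernel vector: adding it changes nothing, and in particular cannot absorb a nonzero residual value. What you actually need is a way to \emph{represent} small positive integers as $\fr$ of a perturbation with tightly controlled coordinates, and $e_1$ alone does not do that. Moreover, $s$ can be as large as $q-1$, so even if you intend to iterate $e_1$ (which moves $w$ in blocks of $p$), you consume up to $\lfloor (q-1)/p\rfloor = \kappa$ units of $z$-headroom and still leave a residue modulo $p$ unaccounted for. The ``short case analysis'' you defer is precisely where all the content lies, and your parameterization $j=rq+s$ does not mesh with the available identities.

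The paper uses a finer, three-level decomposition adapted to the identities that actually hold: starting from the nonnegative base point $\ga+1=\fr(0,p-1,\kappa-1,p-3)$, it writes $t=aq'+bp'+c$ with $a$ maximal and $0\le c\le p'-1$, then builds $(x,y,z,w)=(c,\,p-1-a,\,\kappa-1-b,\,p-3-2c+a+b)$ via $\fr(0,-1,0,1)=q'$, $\fr(0,0,-1,1)=p'$, $\fr(1,0,0,-2)=1$. Only the $z$-coordinate can go negative, and the repair when $b\ge\kappa$ is a genuine three-case argument (Lemmas \ref{L:A}--\ref{L:C}) depending on how $b-\kappa$ compares to $\kappa'-\kappa$. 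That case split is not a formality; it is where the arithmetic of $\kappa,\kappa',\lambda,\lambda'$ enters. Your sketch does not reach this point.
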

To prove this, we show that for each integer $n$ in the closed interval $[\ga + 1, \ga + d_1]$, a nonnegative quadruple $(x,y,z,w)$ exists such that $\fr(x,y,z,w) = n$.    We first construct  quadruples (possibly with negative entries) representing the integers in  $[\ga + 1, \ga + d_1]$ and then show that they can be altered, if necessary, by adding an integer linear combination of the vectors $e_0, e_1, e_2$,  so that they become {\it nonnegative} quadruples.  We will  make use of  the following easily verified facts:
\begin{align}
\fr(0, -1, 0, 1) & = q' \label{E:factq'}\\
\fr(0,0, -1, 1) & = p' \label{E:factp'}\\
\fr(1, 0, 0, -2) & = 1. \label{E:fact1}
\end{align}

We start by obtaining a nonnegative representation of $\ga+1$, using $\fr(e_1)=0$ and \eqref{E:fact1}:
\begin{equation}\label{E:g0+1}
\ga + 1 = \fr(p'-1, p-1, \kappa, -1) - \fr(p', 0, 1, -p) + \fr(1,0,0,-2) = \fr(0,p-1, \kappa-1, p-3).
\end{equation}
We proceed to show that
 $\ga + 1 + t$ has a nonnegative representation for all $t \in [0, d_1-1]$.

Let an integer $t \in [0, d_1-1]$ be represented with the division algorithm as
\begin{equation}\label{E:paramdef}
t= aq' + bp' + c,\quad \text{with} \quad a \geq 0 \text{\ maximal,}\quad b \geq 0,\quad 0 \leq c \leq p'-1.
\end{equation}
 The triple $(a,b,c)$ is uniquely determined by $t$ and conversely.
 \begin{lemma}\label{L:t}
 If $t \in [0, d_1-1]$ has the  representation \eqref{E:paramdef}, then
\begin{enumerate}
\item $a \leq p-1$;
\item $a = p-1 \implies b=0$;
\item $b \leq \kappa'$;
\item $b = \kappa' \implies c < \lambda' \implies p>3$;
\item $b \geq \kappa \implies a \leq p-2$.
\end{enumerate}
\end{lemma}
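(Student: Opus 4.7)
The plan is to derive all five assertions directly from the defining conditions on the triple $(a,b,c)$: maximality of $a$ subject to $b \geq 0$ and $0 \leq c \leq p'-1$, together with the upper bound $t \leq d_1 - 1 = p'q - 1$ and the decomposition $q' = \kappa' p' + \lambda'$ from \eqref{E:defs2}. Each part reduces to an elementary division-algorithm-style manipulation, so I would treat them in turn.

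First I would dispose of (i) by noting that $pq' - p'q = (q-p)/2 > 0$, so $t \leq d_1 - 1 < pq'$; hence $aq' \leq t < pq'$ forces $a \leq p - 1$. For (ii), the identity $(p-1)q' = 2p'q' = p'(q-1) = d_1 - p'$ gives $bp' + c = t - (p-1)q' \leq p' - 1$, which forces $b = 0$ (and determines $c$).

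The heart of the argument is (iii), which I would prove by contradiction: assume $b \geq \kappa' + 1$, and use $q' = \kappa' p' + \lambda'$ to absorb one extra $q'$ into the $p'$- and $c$-terms. If $c \geq \lambda'$, replace $(a,b,c)$ by $(a+1,\ b - \kappa',\ c - \lambda')$; if $c < \lambda'$, replace it by $(a+1,\ b - \kappa' - 1,\ c + p' - \lambda')$. A short check, using $0 \leq \lambda' \leq p' - 1$ and the assumed lower bound on $b$, shows the new triple satisfies $b' \geq 0$ and $0 \leq c' \leq p' - 1$, contradicting the maximality of $a$. Part (iv) follows the same pattern with $b = \kappa'$: if $c \geq \lambda'$, the triple $(a+1,\ 0,\ c - \lambda')$ is admissible, so maximality forces $c < \lambda'$; and since $p = 3$ makes $p' = 1$ and hence $\lambda' = 0$, the inequality $c < \lambda'$ can hold only when $p > 3$.

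Finally, (v) is a direct consequence of (ii): because $q > p$ forces $\kappa \geq 1$, the hypothesis $b \geq \kappa$ rules out $b = 0$, and hence by (ii) we cannot have $a = p - 1$; then (i) gives $a \leq p - 2$. The only mildly delicate point is the case split in (iii) when $c < \lambda'$, where one must verify $c + p' - \lambda' \in [0, p'-1]$; this is immediate from $0 \leq c \leq \lambda' - 1$ and $\lambda' \leq p' - 1$, so no part of the lemma appears to pose a genuine obstacle.
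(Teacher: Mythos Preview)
Your proof is correct and follows essentially the same strategy as the paper: (i) and (ii) come from the size constraint $t<d_1$, while (iii) and (iv) come from the maximality of $a$ via $q'=\kappa'p'+\lambda'$. The only minor differences are cosmetic: you spell out the case split in (iii) that the paper leaves implicit, and you derive (v) from (ii) by contrapositive, whereas the paper argues (v) directly alongside (i) by noting that $a=p-1$ and $b\geq\kappa$ force $t\geq (p-1)q'+\kappa p'=p'(q-1+\kappa)\geq p'q=d_1$.
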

\begin{proof}
(i) and (v):   If $a\geq p$, or if $a=p-1$ and $b \geq \kappa$, then $t \geq p'q=d_1$, contrary to assumption.  (ii):  If $a=p-1$ and $t \leq p'q$, then $bp'+c \leq p'-1$, which implies $b=0$.  (iii): If $b>\kappa'$, $a$ is not maximal. (iv):  If $b=\kappa'$ and $c \geq \lambda'$, $a$ is not maximal.  When $p=3$, $\lambda'=0$ and hence $c<\lambda'$ is impossible.
\end{proof}

It follows from  \eqref{E:factq'}-\eqref{E:fact1} that, for $t \in [0, d_1-1]$,
 $$
\ga + 1 + t = \fr(0, p-1, \kappa -1, p-3) + a\cdot\fr(0,-1,0,1) + b \cdot \fr(0,0,-1,1) + c\cdot \fr(1,0,0,-2).
$$
Thus $\ga+1+t =\fr(x,\ y,\ z,\ w)$,   where
\begin{align}
x  &= c\label{E:x}\\
y  &= p-1-a\label{E:y}\\
z  &= \kappa - 1-b \label{E:z} \\
w &  = p-3-2c+a+b. \label{E:w}
\end{align}
By definition, $x\geq 0$.  By Lemma~\ref{L:t}(i), $y \geq 0$.  $w \geq 0$ because $c \leq p'-1$ is equivalent to
\begin{equation}\label{E:p-3-2c}
p-3-2c \geq 0.
\end{equation}
Thus $z$ is the only component of the quadruple $(x,y,z,w)$ which might  be negative (if $b \geq \kappa$).  If this is the case,
\begin{equation}\label{E:z<0}
 b = \kappa + s\quad \text{ for some}\quad  0 \leq s \leq \kappa'-\kappa.
\end{equation}
The upper bound on $s$ is a consequence of  Lemma~\ref{L:t}(iii).  We now show that there is always an integer linear combination of the vectors \eqref{E:fact0p} and \eqref{E:fact0q}, which, when added to the quadruple defined by \eqref{E:x} - \eqref{E:w}, yields a nonnegative quadruple.   The argument will be divided into three parts (Lemmas~\ref{L:A}, \ref{L:B} and \ref{L:C}), according to whether $s$ is, respectively, less than, equal to, or greater than $\kappa'-\kappa - 1$.

 For notational convenience, we define the quadruple
\begin{equation}\label{E:equiv}
e(u,v) \equiv (u-1)e_2 + (v+1)e_1,
\end{equation}
where  $e_1$, $e_2$  are the vectors  \eqref{E:fact0p} and \eqref{E:fact0q}, respectively, and $u,v \in {\Bbb Z}$.

\begin{lemma}\label{L:A}
 If  $s<\kappa'-\kappa -1$,
 \begin{enumerate}
 \item $\kappa + s -(s+1) p \geq 0$;
 \item $(x',y',z', w') =(x, y, z, w) + e(1, s)$ is a nonnegative quadruple.
 \end{enumerate}
\end{lemma}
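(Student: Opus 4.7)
The plan is to handle both parts by direct calculation. For part (i), I will exploit the identity linking the divisions \eqref{E:defs1} and \eqref{E:defs2}, obtained by equating the two expressions $q-1 = \kappa p + \lambda - 1$ and $2q' = \kappa'(p-1) + 2\lambda'$. For part (ii), I will write the adjusted quadruple as $(x',y',z',w') = (x,y,z,w) + (s+1)e_1$ (which equals $e(1,s)$ by \eqref{E:equiv}) and check nonnegativity coordinate by coordinate, invoking (i) only for the last one.

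For (i), setting $j := \kappa' - \kappa$ and simplifying the identity above gives
\[
\kappa + \lambda - 1 - 2\lambda' = j(p-1) = 2jp',
\]
so $\kappa = 2jp' + 1 + 2\lambda' - \lambda$. The hypothesis $s < \kappa' - \kappa - 1$ forces $j \geq s+2$, hence
\[
\kappa \geq 2(s+2)p' + 1 + 2\lambda' - \lambda = (s+1)(p-1) + p + 2\lambda' - \lambda.
\]
Subtracting $(s+1)p - s = (s+1)(p-1)+1$ from both sides yields $\kappa + s - (s+1)p \geq (p-1) - \lambda + 2\lambda' \geq 0$, using $\lambda \leq p-1$ and $\lambda' \geq 0$.

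For (ii), since $e(1,s) = (s+1)e_1 = \bigl((s+1)p',\,0,\,s+1,\,-(s+1)p\bigr)$, the formulas \eqref{E:x}--\eqref{E:w} give
\[
x' = c + (s+1)p',\qquad y' = p-1-a,\qquad z' = \kappa + s - b,\qquad w' = (p-3-2c) + a + \bigl(\kappa + s - (s+1)p\bigr).
\]
Then $x' \geq 0$ is trivial; $y' \geq 0$ is Lemma~\ref{L:t}(i); $z' = 0$ because $b = \kappa + s$ by \eqref{E:z<0}; and $w' \geq 0$ follows from \eqref{E:p-3-2c}, $a \geq 0$, and part (i).

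The real work is in part (i), where the right linear identity between $\kappa, \kappa', \lambda, \lambda'$ must be isolated. The choice of correction vector $(s+1)e_1$ in (ii) is forced rather than discovered: it is the unique element of $\mathbb{Z} e_1$ that cancels the negativity of $z = -1-s$ while leaving the second coordinate fixed, so no combinatorial search is required. Everything past the linear relation is routine arithmetic, which is why I expect the whole proof to fit in a few lines.
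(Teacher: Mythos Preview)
Your proof is correct and follows essentially the same route as the paper. For part (ii) the arguments are identical; for part (i) you derive the exact identity $\kappa' - 1 = (\kappa'-\kappa)p - \lambda + 2\lambda'$ (equivalent to the paper's later Lemma~\ref{L:lambda'}) and then use $j \geq s+2$, whereas the paper starts from the weaker inequality $\kappa' - 1 \geq (\kappa'-\kappa)p - \lambda$ (obtained by dropping the $2\lambda'$ term) and substitutes $l = \kappa'-\kappa-s-1 \geq 1$---but these are the same manipulation up to a cosmetic change of variables.
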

\begin{proof}
From $q' \geq \kappa' p'$ we obtain  $q-1 \geq \kappa'(p-1)$ $\iff$ $\kappa'-1 \geq \kappa' p-q$   $= (\kappa'-\kappa)p +\kappa p -q$, and hence
\begin{equation*}\label{E:kappa'ineq}
\kappa' - 1  \geq (\kappa' - \kappa)p - \lambda.
\end{equation*}
  It follows from this that
\begin{equation*}\label{E:anyl}
\kappa' - 1 - l \geq (\kappa' - \kappa)p - \lambda - l, \quad\text{for}\quad l \geq 0.
\end{equation*}
 In particular, since $\lambda \leq p-1$,
\begin{equation}\label{E:l>1}
\kappa'-1 - l  \geq (\kappa' - \kappa)p - lp \quad \text{if}\quad l \geq 1.
\end{equation}
 Putting $l = \kappa'-\kappa -s -1\geq 1$ in \eqref{E:l>1}, we obtain (i).
To prove (ii), we have $x'>x \geq 0$, $y'=y\geq 0$, and $z'=-(s+1) +(s+1) = 0$.  We need only show that
\begin{equation}\label{E:w'}
w' = p-3-2c+a+b - (s+1)p
\end{equation}
is nonnegative.  Recalling that $b=\kappa + s$, and using \eqref{E:p-3-2c}, we obtain
\begin{equation*}
w'\geq \kappa + s - (s+1)p.
\end{equation*}
Thus $w'\geq 0$ is a consequence of (i).
\end{proof}

If $s=\kappa'-\kappa-1$, then $w \geq (s+1)p$ easily implies that the fourth coordinate of $(x,y,z,w) + e(1,s)$ is positive.  The following lemma treats the case $w<(s+1)p$, where the fourth coordinate of $(x,y,z,w) + e(1,s)$ is negative.

\begin{lemma}\label{L:B}
  If $s=\kappa'-\kappa-1$ and $w<(s+1)p$,
\begin{enumerate}
\item $c-\lambda' \geq 0$;
\item  $(x',y',z', w') =(x, y, z, w) + e(0, \kappa'-1)$ is a nonnegative quadruple.
\end{enumerate}
\end{lemma}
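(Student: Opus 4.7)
The plan is a direct computation, exactly in the style of Lemma~\ref{L:A}: expand $e(0,\kappa'-1)=-e_2+\kappa'e_1$ explicitly, add it coordinate-wise to the quadruple $(x,y,z,w)$ of \eqref{E:x}--\eqref{E:w}, and show each entry of the result is nonnegative. Statement (i) will appear as the verification of the first coordinate, so (i) and (ii) will be established simultaneously.

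First I would write $e(0,\kappa'-1)$ in the form $(-\lambda',\,-1,\,\kappa',\,\lambda-(s+1)p)$. This uses the identity $q'-\kappa' p'=\lambda'$ from \eqref{E:defs2} together with the translation
\[
(\kappa'-\kappa)p \;=\; \lambda+\kappa'-2\lambda'-1,
\]
which comes from equating $q=\kappa p+\lambda$ with $q=2q'+1=\kappa'(p-1)+2\lambda'+1$. Substituting $b=\kappa+s=\kappa'-1$ (since $s=\kappa'-\kappa-1$) into \eqref{E:x}--\eqref{E:w} and adding this vector, the coordinates of $(x',y',z',w')$ simplify to
\[
x'=c-\lambda',\qquad y'=p-2-a,\qquad z'=\kappa,\qquad w'=(p-3-2c)+a+2\lambda'.
\]
Three of the four are essentially automatic: $z'=\kappa\geq 1$; $w'\geq 0$ from \eqref{E:p-3-2c} together with $a,\lambda'\geq 0$; and $y'\geq 0$ is Lemma~\ref{L:t}(v), applicable because $b=\kappa'-1\geq\kappa$ (as $s\geq 0$).

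The only real obstacle, and the sole place where the hypothesis $w<(s+1)p$ is used, is the nonnegativity of $x'=c-\lambda'$, i.e.\ statement (i). Using the displayed identity to rewrite $w<(s+1)p$ after substituting $b=\kappa'-1$, the inequality becomes
\[
2(c-\lambda') \;>\; a+(p-\lambda)-3.
\]
Since $a\geq 0$ and $p-\lambda\geq 1$ by \eqref{E:defs1}, the right side is at least $-2$, so $c-\lambda'>-1$, and integrality forces $c\geq\lambda'$. The degenerate case $p=3$ collapses since then $\lambda'=c=0$ and (i) is trivial. No continued-fraction or case-splitting machinery beyond Lemma~\ref{L:t} is required.
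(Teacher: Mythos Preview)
Your proof is correct and follows essentially the same direct-computation approach as the paper. The only notable difference is in the verification of $w'\geq 0$: the paper writes $w'=w-(\kappa'-\kappa)p+\lambda$ and appeals to the inequality $\kappa'-1\geq(\kappa'-\kappa)p-\lambda$ derived in the proof of Lemma~\ref{L:A}, whereas you push the simplification one step further (via your identity $(\kappa'-\kappa)p=\lambda+\kappa'-2\lambda'-1$, which is just a rearrangement of Lemma~\ref{L:lambda'}) to obtain $w'=(p-3-2c)+a+2\lambda'$, from which nonnegativity is immediate. Your derivation of (i) is algebraically equivalent to the paper's; both amount to the same chain of inequalities ending at $c\geq\lambda'$ via Lemma~\ref{L:lambda'}.
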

\begin{proof}
 $w= p-3-2c + a + \kappa + s < (s+1)p$ is equivalent to
\begin{align*}
2c &>\kappa -3 - s(p-1) \\
c &> (\kappa - 3)/2 - sp' \\
c &\geq (\kappa - 1)/2 - sp'.
\end{align*}
Putting $s=\kappa'-\kappa - 1$, we have
\begin{align*}
c &\geq (\kappa -1 + p-1 )/2 - (\kappa'-\kappa)p' \\
&\geq (\kappa + \lambda -1)/2 - (\kappa'-\kappa)p' \\
&=\lambda',
\end{align*}
where we have used $\lambda \leq p-1$ and   Lemma~\ref{L:lambda'}.  Thus (i) is proved.
$x'=c-\lambda' $ which is $\geq 0$ by (i).  $y'=p-1-a-1 \geq 0$ by Lemma \ref{L:t}(v).  $z'= -s-1+\kappa'=\kappa \geq 1$.  Finally, $w'=w+q-k'p = w-(\kappa'-\kappa)p +\lambda$.
  Since $w \geq b = \kappa + s =\kappa'-1$, and $\kappa'-1 \geq (\kappa'-\kappa)p-\lambda$ by
  \eqref{E:kappa'ineq},
  \begin{equation*}
w'=w-(\kappa' - \kappa)p + \lambda \geq (\kappa'-\kappa)p - \lambda - (\kappa'-\kappa)p + \lambda = 0.
\end{equation*}
  Thus (ii) is proved.
\end{proof}

\begin{lemma}\label{L:C}
 If $s = \kappa'-\kappa$,  $(x',y',z',w')=(x,y,z,w) + e(0, \kappa')$ is a nonnegative quadruple.
\end{lemma}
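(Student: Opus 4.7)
The plan is to proceed exactly as in Lemmas~\ref{L:A} and \ref{L:B}: expand $e(0, \kappa')$ via the definition \eqref{E:equiv} and the basis vectors \eqref{E:fact0p}, \eqref{E:fact0q}, add it to the quadruple $(x, y, z, w)$ given by \eqref{E:x}--\eqref{E:w} with $b = \kappa + s = \kappa'$, and verify that each of the four coordinates of the resulting quadruple is nonnegative.

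First I would compute
$$e(0, \kappa') = -e_2 + (\kappa'+1)e_1 = \bigl(p' - \lambda',\; -1,\; \kappa'+1,\; \lambda - (s+1)p\bigr),$$
where the first and fourth entries are simplified using $q' = \kappa' p' + \lambda'$ and $q = \kappa p + \lambda$ together with $s = \kappa' - \kappa$. Adding this to $(x, y, z, w)$ (with $b = \kappa'$) yields
$$x' = c + (p' - \lambda'),\quad y' = p - 2 - a,\quad z' = \kappa,\quad w' = a + \kappa + \lambda - 3 - 2c - s(p-1).$$

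Three of the four coordinates are immediate. Since $\lambda' \leq p' - 1$, we have $x' \geq c + 1 \geq 0$. Since $b = \kappa' \geq \kappa$, Lemma~\ref{L:t}(v) gives $a \leq p-2$, so $y' \geq 0$. And $z' = \kappa \geq 1$ because $q > p$.

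The main work is to show $w' \geq 0$. Here Lemma~\ref{L:t}(iv) applies: $b = \kappa'$ forces $c \leq \lambda' - 1$ (and, incidentally, $p > 3$). Substituting $-2c \geq 2 - 2\lambda'$ into $w'$ gives
$$w' \geq a + \kappa + \lambda - 1 - 2\lambda' - s(p-1).$$
The key identity I would then invoke is
$$2\lambda' + s(p-1) = \kappa + \lambda - 1,$$
which follows from a one-line manipulation of the defining equations for $\kappa, \lambda, \kappa', \lambda'$ (essentially combining $q - 1 = \kappa'(p-1) + 2\lambda'$ with $q = \kappa p + \lambda$ and $s = \kappa' - \kappa$); this is presumably the content of the earlier-cited Lemma~\ref{L:lambda'}. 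With this identity, $w' \geq a \geq 0$, completing the proof. The only nontrivial ingredient is the arithmetic identity relating $\lambda'$ to $\kappa, \lambda, s, p$; once it is in hand, the verification is entirely mechanical and exactly parallel to Lemmas~\ref{L:A} and \ref{L:B}.
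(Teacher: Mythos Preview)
Your proof is correct and follows essentially the same approach as the paper's. Both compute the same quadruple and verify nonnegativity using Lemma~\ref{L:t}(iv),(v) and the identity of Lemma~\ref{L:lambda'}; the paper simplifies $w'$ directly to $2(\lambda'-c)-2+a$ via $q-1=2q'$ and $q'-\kappa'p'=\lambda'$, whereas you reach the equivalent bound $w'\geq a$ through the rearranged identity $2\lambda'+s(p-1)=\kappa+\lambda-1$, but these are the same computation in different order.
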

\begin{proof} By Lemma~\ref{L:t}(iv), $c< \lambda'$.  Since both $c$ and $\lambda'$ are $\leq p'-1$,
\begin{equation}\label{E:lambdaC}
1 \leq \lambda'-c \leq p'-1.
\end{equation}
We have
\begin{align}
x' &= c-q' +(\kappa' + 1)p' =p'-(\lambda' - c)   \label{E:x'C}\\
y' &=p-2-a  \label{E:y'C}\\
z'& =-(s+1)+\kappa'+1 = \kappa   \label{E:z'C}\\
w'&=p-3-2c+a+\kappa' +q-(\kappa'+1)p \notag\\
&=-3-2c+a + q -\kappa'(p-1) \notag \\
&=-2-2c+a + q-1-\kappa'(p-1) \notag \\
&=-2-2c+a+ 2(q'-\kappa'p') \notag \\
&=-2-2c+a+2\lambda'  \notag \\
&=2(\lambda'-c) -2 +a \label{E:w'C}
\end{align}
$x', w' \geq 0$ by \eqref{E:lambdaC}. $y'\geq 0$ by Lemma~\ref{L:t}(v). Finally $z'=\kappa\geq 1$.
\end{proof}

Lemmas~\ref{L:A}, \ref{L:B} and \ref{L:C} together constitute a proof of Proposition~\ref{P:1} which implies that $g \leq \ga$ for all prime pairs $2<p<q$.

\section{Representability of $\ga$}\label{S:repg0}

In this section we prove

\begin{prop}\label{P:2} $\ga$ is representable if and only if $\kappa + \lambda < p$.
\end{prop}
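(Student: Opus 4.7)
I plan to use the framework from Section~\ref{S:representability>g0}: every integer quadruple with $\fr$-value $\ga$ has the form
$$(p'-1,p-1,\kappa,-1)+\alpha e_0+\beta e_1+\gamma e_2$$
for some $\alpha,\beta,\gamma\in\mathbb{Z}$, so representability amounts to asking whether such parameters can be chosen making all four coordinates $\geq 0$. First I will narrow $\gamma$: combining $z\geq 0$, i.e., $\beta\geq -\kappa$, with $w\geq 0$, i.e., $\beta p+\gamma q\leq -1$, yields $\gamma q\leq \kappa p-1<q$, forcing $\gamma\leq 0$. The case $\gamma=0$ is dispatched in one step: $w\geq 0$ gives $\beta\leq -1$, then $x\geq 0$ gives $\alpha+\beta\geq 0$, so $\alpha\geq 1$, contradicting $y\geq 0$, which requires $\alpha\leq (p-1)/p<1$.

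So I may assume $\gamma=-h$ for some $h\geq 1$. Writing $hq-1=h\kappa p+(h\lambda-1)$ and $q'=\kappa' p'+\lambda'$, the constraints $y\geq 0$, $w\geq 0$, and $x\geq 0$ become respectively
$$\alpha\leq -\lfloor h/p\rfloor,\qquad \beta\leq h\kappa+\lfloor(h\lambda-1)/p\rfloor,\qquad \alpha+\beta\geq h\kappa',$$
where the last uses $\lambda'\geq 0$ and $p'\geq 1$; the remaining constraint $z\geq 0$ is automatic in the range where $\beta$ meets its upper bound. Adding the two upper bounds and comparing with the lower bound produces the key necessary inequality
$$h(\kappa'-\kappa)+\lfloor h/p\rfloor\leq \lfloor(h\lambda-1)/p\rfloor.$$

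For the \emph{only if} direction: if $\kappa+\lambda\geq p$, one checks directly from \eqref{E:defs1}--\eqref{E:defs2} that $\kappa'\geq \kappa+1$, so the left side above is at least $h$, while the right side is strictly less than $h\lambda/p\leq h(p-1)/p<h$, impossible for every $h\geq 1$. For the \emph{if} direction: if $\kappa+\lambda<p$, then $\kappa'=\kappa$, and the choice $h=1$, $\alpha=0$, $\beta=\kappa$ saturates all three bounds and produces the explicit representation
$$\ga=\fr(p'-1-\lambda',\,p-2,\,2\kappa,\,\lambda-1),$$
whose entries are nonnegative by $\lambda'\leq p'-1$, $p\geq 3$, $\kappa\geq 1$, $\lambda\geq 1$.

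The main obstacle I anticipate is the integer bookkeeping when deriving the three bounds on $\alpha$, $\beta$, and $\alpha+\beta$ above, in particular pinning down $\lfloor(h\lambda-1)/p\rfloor$ precisely enough that the final comparison $\lfloor(h\lambda-1)/p\rfloor<h$ is tight and kills every $h\geq 1$ at once; once those bounds are in place, both directions follow immediately.
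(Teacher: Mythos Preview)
Your argument is correct. Both directions go through cleanly: the floor-function bounds you extract from $x,y,w\ge 0$ combine to the necessary inequality $h(\kappa'-\kappa)+\lfloor h/p\rfloor\le\lfloor(h\lambda-1)/p\rfloor$, and when $\kappa+\lambda\ge p$ the observation $\kappa'\ge\kappa+1$ (which indeed follows at once from $q'-\kappa p'=(\kappa+\lambda-1)/2\ge p'$) kills every $h\ge1$; the $\gamma=0$ case and the explicit quadruple $(p'-1-\lambda',\,p-2,\,2\kappa,\,\lambda-1)$ for the converse are exactly right, the latter coinciding with the paper's representation.

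The paper follows the same outline---parametrise all representations of $\ga$, bound the parameters, force a contradiction---but packages the bookkeeping differently. Rather than working directly in the $e_0,e_1,e_2$ basis, it passes through Proposition~\ref{P:system} and Lemma~\ref{L:rigid} to obtain a two-equation system in an auxiliary integer $l$ (your $h$), then introduces the quantity $B=(\kappa'-\kappa)p-\lambda+1$ and pins down $w=l\lambda-1$ exactly; the contradiction is that $y\ge0$ forces $l=0$ while $w\ge0$ forces $l\ge1$. Your route is more self-contained and avoids the intermediate system, trading the exact determination of $w$ for a uniform floor-inequality that rules out all $h$ at once; the paper's route sets up machinery (Proposition~\ref{P:system}) that it reuses verbatim for $\gb$ and $\gc$ in later sections.
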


    Suppose $\kappa + \lambda < p$, and put $(x',y',z',w')=e(0, \kappa'-1)+(p'-1, p-1, \kappa, -1) $.  Since
\begin{equation}\label{E:mainequiv}
 e(0, \kappa'-1) = (-q'+\kappa' p', -1, \kappa', q-\kappa' p)=(-\lambda', -1, \kappa, \lambda)
 \end{equation}
(the last equality being a consequence of $\kappa'= \kappa$), it easily verified that $(x',y',z',w')$ is a nonnegative quadruple representing $\ga$.

To prove the necessity of the condition, we employ a  number theoretic lemma whose  proof is a simple exercise.

\begin{lemma}\label{L:rigid}
 Let $m,n$ be relatively prime integers. If $am + bn=a'm+b'n$ for any $a,a',b,b' \in \mathbb Z$ then there exists an integer $l$ such that $a'=a-ln$ and $b'=b+lm$.
 \end{lemma}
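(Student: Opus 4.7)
The plan is to rearrange the hypothesis into a divisibility statement and then invoke Euclid's lemma, using $\gcd(m,n)=1$.

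First, I would rewrite $am+bn = a'm+b'n$ as
\begin{equation*}
(a-a')m = (b'-b)n.
\end{equation*}
This says that $n$ divides the left-hand side. Since $\gcd(m,n)=1$, Euclid's lemma gives $n \mid (a-a')$, so there exists an integer $l$ with $a-a' = ln$, equivalently $a' = a - ln$.

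Substituting back into $(a-a')m = (b'-b)n$ yields $lnm = (b'-b)n$. Cancelling $n$ (which is nonzero since $\gcd(m,n)=1$ rules out $n=0$ unless $m=\pm 1$; the degenerate case $n=0$, $m=\pm 1$ is trivial and handled separately by noting $a=a'$ forces any $l$ to work and $b'$ is unconstrained, matching the statement trivially), we get $b'-b = lm$, i.e., $b' = b + lm$, with the same $l$. This produces the desired common integer $l$.

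There is no real obstacle here; the only subtlety worth checking is that the $l$ obtained from the first divisibility coincides with the $l$ needed for the second equation, but this is automatic from the substitution step above. The hypothesis $\gcd(m,n)=1$ is used exactly once, to pass from $n \mid (a-a')m$ to $n \mid (a-a')$.
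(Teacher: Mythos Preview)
Your argument is correct and is exactly the standard verification the paper has in mind; in fact the paper does not write out a proof at all, merely calling the lemma ``a simple exercise.'' Your use of Euclid's lemma via $(a-a')m=(b'-b)n$ is the intended routine check.
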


\begin{prop}\label{P:system}
If $(x_0,y_0,z_0,w_0)$ and $(x,y,z,w)$ are integer quadruples such that $\fr(x_0,y_0,z_0,w_0)=\fr(x,y,z,w)$,  there exists an integer $l $ such that the system
\begin{equation}\notag
\biggl\{
 \begin{array}{rcl}
 x_0p +(y_0+w_0)p'-lq' &=& xp + (y+w)p'  \\
z_0p+w_0 + lq &=& zp + w.
  \end{array}
  \end{equation}
is satisfied.
\end{prop}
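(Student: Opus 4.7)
The plan is to reduce the four-variable identity $\fr(\ldots)=\fr(\ldots)$ to the two-variable situation of Lemma~\ref{L:rigid} by regrouping $\fr$ as a $\mathbb{Z}$-linear combination of just $q$ and $q'$. The key algebraic observation I would use first is the identity
$$d_3 \;=\; \frac{pq-1}{2} \;=\; p'q + q',$$
which follows instantly from $p = 2p'+1$, $q = 2q'+1$. Substituting this into the definition of $\fr$ and collecting terms,
$$\fr(x,y,z,w) \;=\; xpq + yp'q + zpq' + w(p'q + q') \;=\; q\bigl[xp + (y+w)p'\bigr] + q'\bigl[zp + w\bigr].$$

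With this in hand, the hypothesis $\fr(x_0,y_0,z_0,w_0) = \fr(x,y,z,w)$ becomes
$$q\bigl[x_0p + (y_0+w_0)p'\bigr] + q'\bigl[z_0p + w_0\bigr] \;=\; q\bigl[xp + (y+w)p'\bigr] + q'\bigl[zp + w\bigr].$$
Since $q$ is an odd prime we have $\gcd(q,q-1)=1$ and hence $\gcd(q,q')=1$. I would then apply Lemma~\ref{L:rigid} with $m=q$, $n=q'$, and with the four integers
$$a = x_0p + (y_0+w_0)p', \quad b = z_0p+w_0, \quad a' = xp+(y+w)p', \quad b' = zp+w.$$
The lemma produces an integer $l$ such that $a' = a - lq'$ and $b' = b + lq$, and rearranging these is exactly the two-line system in the statement.

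There is no real obstacle in this argument: once the factorisation $d_3 = p'q+q'$ is spotted, the whole proposition is a direct application of Lemma~\ref{L:rigid}. The only point requiring mild care is the bookkeeping of which quadruple plays the role of $(a,b)$ and which plays $(a',b')$, so that the signs $-lq'$ and $+lq$ in the stated system come out correctly; choosing the subscript-$0$ data as $(a,b)$ and the unsubscripted data as $(a',b')$ does the job.
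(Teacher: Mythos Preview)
Your argument is correct and essentially identical to the paper's proof: the paper also uses $d_3 = p'q + q'$ to rewrite $\fr(x_0,y_0,z_0,w_0) = q[x_0p + (y_0+w_0)p'] + q'[z_0p + w_0]$ and then applies Lemma~\ref{L:rigid} with $m=q$, $n=q'$.
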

\begin{proof} Using \eqref{E:definefracpq} and \eqref{E:d's}, we have
\begin{align}
\fr(x_0, y_0, z_0, w_0) &= x_0pq + y_0p'q + z_0pq' + w_0(p'q + q') \notag \\
&=q[x_0p + (y_0+w_0)p'] + q'[z_0p + w_0]. \label{E:square}
\end{align}
Since $q$ and $q'$ are relatively prime, we can apply Lemma~\ref{L:rigid} with $a$, $b$   being the two expressions in square brackets in \eqref{E:square}.
  \end{proof}

We now resume the proof of Proposition~\ref{P:2}.  Suppose that $\ga=\fr(x,y,z,w)$ with $x,y,z,w$ nonnegative, and further suppose (for a contradiction) that  $\kappa + \lambda \geq p$ (equivalently, $\kappa'-\kappa > 0$). Using Proposition~\ref{P:system} with $(x_0, y_0, z_0, w_0)=(p'-1, p-1, \kappa, -1)$, there exists an integer $l$ such that
\begin{equation}\label{E:gkappasystem}
\biggl\{
 \begin{array}{rcl}
 (p'-1)p +(p-2)p'-lq' &=& xp + (y+w)p'  \\
\kappa p-1 + lq &=& zp + w.
  \end{array}
  \end{equation}
    The second equation implies  $l \geq 0$, since  the right hand side is nonnegative (by assumption) and $\kappa p -1 < q$.    To simplify the system \eqref{E:gkappasystem}, we express $q'$ in terms of $p'$ and $p$.  We have
    \begin{align}
    q' &=\kappa'p'+\lambda' \notag \\
    & = (\kappa'-2\lambda')p' + 2\lambda'p'+\lambda' \notag \\
    &=(\kappa'-2\lambda')p'+\lambda'p, \notag
    \end{align}
where, at the last step, we use $p=2p'+1$. By Lemma~\ref{L:lambda'}, $2\lambda'=\kappa+\lambda -1 - (\kappa'-\kappa)(p-1)$, and hence
\begin{equation}\label{E:q'}
q'=Bp' + \lambda' p,
\end{equation}
where
\begin{equation}\label{E:B}
B = (\kappa'-\kappa)p - \lambda +1.
\end{equation}
Since $\kappa'-\kappa > 0$ and $\lambda < p$,  $B$ is positive.

  Using \eqref{E:q'},  the first equation of  \eqref{E:gkappasystem} becomes
\begin{equation}\label{E:eq1}
(p'-1-l\lambda')p + (p-2 - lB)p' = xp + (y+w)p'.
\end{equation}
Since $p$ and $p'$ are relatively prime,  Lemma~\ref{L:rigid} applies  to the left hand side, with $a$ and $b$ being the two expressions in parentheses.  Hence there exists $t \in {\Bbb Z}$ such that
\begin{align}
x&=p'-1-l\lambda' +tp' \notag \\
y+w &= p-2 - lB - tp.\label{E:<p-2}
\end{align}
The first equation implies $t\geq 0$ (otherwise $x<0$) and the second that $t\leq 0$ (otherwise $y+w < 0$).  Hence $t=0$.
Putting $q=\kappa p + \lambda$ into the second equation of  the system \eqref{E:gkappasystem}, we see that $w \equiv l\lambda - 1$ (mod $p$).  By \eqref{E:<p-2}, $y+w \leq p-2$ and in particular, since $y$ and $w$ are nonnegative, $w \leq p-2$.  The only possibility is $w=l\lambda - 1$. Hence
\begin{align}
y & = p-2-lB-l\lambda + 1 \notag \\
&= p-1-l[(\kappa'-\kappa)p + 1] \notag \\
& \geq p-1-l(p+1)
\end{align}
(since $\kappa'-\kappa >0$).  $y \geq 0$ requires $l=0$, and $w \geq 0$ requires $l>0$, a contradiction.

This completes the proof of Proposition~\ref{P:2}.  Combining this with Proposition~\ref{P:1}, we obtain  Theorem \ref{T1:Main}(ii).

\section{Representability of integers $>\gb$}\label{S:representability>g1}

In this section and the next  we recycle, as far as possible, the arguments in Sections~\ref{S:representability>g0} and \ref{S:repg0}, replacing $\ga$ by $\gb$.  Since $\gb = \ga - \lambda d_3$, we attempt this by simply reducing the fourth coordinate of each quadruple by $\lambda$.  The obstruction, of course, is that some of the fourth coordinates  thereby become negative.

\begin{prop}\label{P:5} If $\kappa + \lambda \leq p-\lambda$, all integers $>\gb$ are representable.
\end{prop}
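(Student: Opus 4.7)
The plan is to adapt the proof of Proposition~\ref{P:1} by subtracting $\lambda$ from the fourth coordinate of every representation produced there, and then to account for the new possibility that this coordinate becomes negative. A useful preliminary observation is that the hypothesis $\kappa+\lambda \leq p-\lambda$ forces $\kappa'=\kappa$: combining $2\lambda' = \kappa+\lambda-1-(\kappa'-\kappa)(p-1)$ (Lemma~\ref{L:lambda'}) with $\kappa+\lambda-1 \leq p-2\lambda-1 \leq 2p'-1$, nonnegativity of $\lambda'$ pins $\kappa'-\kappa$ to $0$. This collapses the three-way case analysis of Section~\ref{S:representability>g0} since the parameter $s$ from Lemmas~\ref{L:A}--\ref{L:C} can then only equal $0$, and we are in the regime of Lemma~\ref{L:C} alone.

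First I would verify the base representation
$$\gb+1 = \fr(0,\,p-1,\,\kappa-1,\,p-3-\lambda),$$
whose fourth coordinate is nonnegative because $\kappa+\lambda \leq p-\lambda$ and $\kappa \geq 1$ give $\lambda \leq p'$, so $p-3-\lambda \geq p'-2 \geq 0$ when $p \geq 5$ (the case $p=3$ is excluded here, being already covered by Theorem~\ref{T1:Main}(iii)). Then, for each $t \in [0, d_1-1]$ parameterized as $t = aq' + bp' + c$ via \eqref{E:paramdef}, I would write
$$\gb+1+t = \fr(c,\,p-1-a,\,\kappa-1-b,\,p-3-2c+a+b-\lambda).$$
By Lemma~\ref{L:t}(iii) and $\kappa'=\kappa$, the third coordinate is negative only when $b=\kappa$, and the fourth coordinate is nonnegative exactly when $2c \leq p-3+a+b-\lambda$.

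The cases then split as follows. When $b<\kappa$ and $2c \leq p-3+a+b-\lambda$, the quadruple is already nonnegative. When $b=\kappa$, I would apply the analogue of Lemma~\ref{L:C}, adding $e(0,\kappa')$; using $\kappa'=\kappa$ and the explicit computations \eqref{E:x'C}--\eqref{E:w'C}, the adjusted quadruple $(x',y',z',w'-\lambda)$ has $w'-\lambda = 2(\lambda'-c) - 2 + a - \lambda$, whose nonnegativity I would check by invoking Lemma~\ref{L:t}(iv) and the explicit value $2\lambda'=\kappa+\lambda-1$ (since $\kappa'=\kappa$) combined with the hypothesis $\kappa+\lambda \leq p-\lambda$. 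Finally, for the residual subcase in which $b<\kappa$ but $w-\lambda<0$, an additional adjustment vector is needed; adding a suitable multiple of $e_1 = (p',0,1,-p)$ trades $p$ off the fourth coordinate in exchange for $p'$ on the first and $1$ on the third, and one verifies directly that at most one such correction suffices under $\kappa+\lambda \leq p-\lambda$.

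The main obstacle is controlling the fourth coordinate uniformly across these adjustments. The bounds in Section~\ref{S:representability>g0} were only strong enough to guarantee $w \geq 0$ after correction, and now we require $\lambda$ more than that. The precise slack that makes everything fit is exactly $\kappa+\lambda \leq p-\lambda$, and I expect the tightest subcases to be those with small $a$ and small $c$, where essentially no room is left. The argument should terminate cleanly because the reduction $\kappa'=\kappa$ eliminates the most intricate branch (Lemma~\ref{L:B}) altogether.
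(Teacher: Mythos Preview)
Your overall framing is right---subtract $\lambda$ from the fourth coordinate everywhere and repair---and the reduction to $\kappa'=\kappa$ is correctly observed. But the two repair steps you sketch both fail.

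In the subcase $b=\kappa$, after the Lemma~\ref{L:C} correction you claim you can verify $w'-\lambda=2(\lambda'-c)-2+a-\lambda\geq 0$ using Lemma~\ref{L:t}(iv) and $2\lambda'=\kappa+\lambda-1$. This is false: take $a=0$ and $c=\lambda'-1$ (allowed since $c<\lambda'$); then $w'-\lambda=-\lambda<0$. A second correction is needed here. The paper adds the vector $e(0,\kappa'-1)=(-\lambda',-1,\kappa,\lambda)$ once more, obtaining $w''=2(\lambda'-c)-2+a\geq 0$, and then rules out $x''=p'-2\lambda'+c<0$ by showing that would force $w'-\lambda\geq p-\lambda-(\kappa+\lambda)\geq 0$, contradicting the subcase hypothesis.

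In the subcase $b<\kappa$ with $w-\lambda<0$, your proposed fix via $e_1=(p',0,1,-p)$ goes the wrong way: adding $e_1$ \emph{subtracts} $p$ from the fourth coordinate, and subtracting $e_1$ would make $x=c$ (possibly $0$) negative. The correct move, again, is to add $e(0,\kappa'-1)=(-\lambda',-1,\kappa,\lambda)$, which restores the fourth coordinate to its Proposition~\ref{P:1} value $p-3-2c+a+b\geq 0$. One then checks the other coordinates: $z+\kappa=2\kappa-1-b\geq\kappa>0$; $y-1=p-2-a\geq 0$ since $w<0$ forces $a\leq\lambda\leq p-3$; and $x-\lambda'=c-\lambda'\geq 0$ provided $c\geq\lambda'$. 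When $c<\lambda'$ no correction is needed at all, because then $w\geq p-3-2(\lambda'-1)+a+b-\lambda=p-(\kappa+\lambda)-\lambda+a+b\geq 0$ directly from the hypothesis.

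So the missing idea is the single adjustment vector $(-\lambda',-1,\kappa,\lambda)$, which adds back exactly the $\lambda$ you removed; the hypothesis $\kappa+\lambda\leq p-\lambda$ is used precisely to dispose of the residual cases $c<\lambda'$ (respectively $c<2\lambda'-p'$) where that vector cannot be applied.
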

\begin{proof}  It suffices to show that there is a nonnegative quadruple representing each integer in the closed interval  $[\gb + 1, \gb + d_1]$.  We start with the representation  $\gb + 1=\fr(0,p-1, \kappa-1, p-3-\lambda)$, obtained from \eqref{E:g0+1} by subtracting $\lambda$ from the fourth coordinate.  The fact that $\kappa+\lambda$ is odd and less than $p$, and that $\kappa \geq 1$, together imply that $\lambda \leq p-3$; thus this is a nonnegative representation.  Representing $t \in [0, d_1-1]$ by \eqref{E:paramdef},  we write $\gb+1+t$  $=\fr(x,y,z,w)$, where $x,y,z$ are given by \eqref{E:x}, \eqref{E:y}, \eqref{E:z}, respectively, and
\begin{equation}\label{E:w-lambda}
w=p-3- 2c+a+b-\lambda,
\end{equation}
which is obtained from \eqref{E:w} by subtracting $\lambda$.

If $z \geq 0$, the possible obstruction is $w<0$.  Then  $a \leq \lambda$ (since $p-3-2c \geq 0$).  Adding  \eqref{E:mainequiv} to $(x,y,z,w)$ yields a nonnegative quadruple, provided $c \geq \lambda'$.  If $c<\lambda'$,
\begin{align}
w &\geq p-3-2(\lambda' - 1) + a + b - \lambda \notag \\
&= p-3-(\kappa + \lambda - 3) + a + b - \lambda \notag \\
&\geq p-\lambda - (\kappa + \lambda) \notag \\
&\geq 0, \notag
\end{align}
contrary to the assumption that $w<0$.

 If $z<0$,  $b=\kappa$ and $c<\lambda'$.  We write $\gb + 1 + t = \fr(x',y',z',w')$, where $x',y',z'$ are given by \eqref{E:x'C}, \eqref{E:y'C}, \eqref{E:z'C}, respectively, and
\begin{equation}\label{E:w'C-lambda}
w'=2(\lambda' -c) - 2 + a -\lambda,
\end{equation}
which is obtained from \eqref{E:w'C} by subtracting $\lambda$.   The only possible obstruction is $w'<0$.  If this is the case, we add  \eqref{E:mainequiv} to $(x',y',z',w')$, yielding the quadruple
\begin{align}
x'' & = p'-2\lambda' + c \notag \\
y'' &= p-3-a \notag \\
z'' &= 2\kappa \notag \\
w'' &= 2(\lambda'-c) - 2 + a \notag
\end{align}
The assumption $w'<0$ implies $a  < \lambda \leq p-3$, so $y'' \geq 0$.  Clearly $z'' \geq 0$.  $w'' \geq 0$ since it is equal to $\eqref{E:w'C}$.  If $x''<0$ then $c<2\lambda' - p'$.  If this is the case,
\begin{align}
w' &\geq 2(\lambda' - (2\lambda' - p' -1))-2 + a -\lambda \notag \\
& \geq 2p'-2\lambda' - \lambda \notag \\
&=2p' - (\kappa + \lambda -1) - \lambda  \notag \\
&= p-\lambda - (\kappa + \lambda). \notag \\
&\geq 0, \notag
\end{align}
contradicting the assumption that $w'<0$.  Hence $(x'', y'', z'', w'')$ is a nonnegative quadruple.
\end{proof}

\begin{cor}\label{C:5}
 If $\kappa + \lambda \leq p- \lambda$ then $g \leq \gb$.
\end{cor}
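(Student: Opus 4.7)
The plan is to derive this corollary as an immediate consequence of Proposition~\ref{P:5}, which does all of the actual work. By definition, the Frobenius number $g$ is the largest positive integer that is \emph{not} representable as $\fr(x,y,z,w)$ with $x,y,z,w$ nonnegative integers. Proposition~\ref{P:5}, under the same hypothesis $\kappa+\lambda\leq p-\lambda$, asserts that every integer strictly greater than $\gb$ is representable. Combining these two facts, no integer in the range $(\gb,\infty)$ can be equal to $g$, so $g\leq \gb$.

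So the proof is just a one-line invocation: assume $\kappa+\lambda\leq p-\lambda$; by Proposition~\ref{P:5} all integers $m>\gb$ admit a nonnegative representation $m=\fr(x,y,z,w)$; since $g$ is by definition non-representable, we must have $g\leq\gb$, which is the claim. There is no obstacle: the substantive content (the case analysis in Lemmas~\ref{L:A}--\ref{L:C} recycled with the fourth coordinate shifted by $-\lambda$, together with the two sub-cases $z\geq 0$ and $z<0$ that appear in the proof of Proposition~\ref{P:5}) has already been handled. The corollary merely records the upper-bound half of Theorem~\ref{T4:Main}(ii), and its companion, representability of $\gb$ itself under a sharpened hypothesis, will presumably be treated in a subsequent section analogous to Section~\ref{S:repg0}.
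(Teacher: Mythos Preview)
Your proposal is correct and matches the paper's approach exactly: the corollary is stated immediately after Proposition~\ref{P:5} with no separate proof, since it follows at once from that proposition and the definition of $g$.
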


To complete the proofs of Theorems~\ref{T4:Main} and \ref{T5:Main}, we need necessary and sufficient conditions for the representability of $\gb$, and conditions under which there is an integer $>\gb$ which is not representable.

\section{Representability of $\gb$ and $\gb +  \lambda'$}\label{S:repg1}
We need two preliminary results.
\begin{lemma}\label{L:lambda'}
$\lambda' = \displaystyle{\frac{\kappa + \lambda -1}{2} - (\kappa'-\kappa)p'}$.
\end{lemma}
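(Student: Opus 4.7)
The plan is a short direct computation starting from the two defining equations \eqref{E:defs1} and \eqref{E:defs2}, together with the identities $p = 2p'+1$ and $q = 2q'+1$ coming from the definitions of $p'$ and $q'$.

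First I would substitute $p = 2p'+1$ into $q = \kappa p + \lambda$ to obtain
\[
q = 2\kappa p' + (\kappa+\lambda).
\]
On the other hand $q = 2q'+1$, so equating the two expressions and solving for $q'$ gives
\[
q' = \kappa p' + \frac{\kappa+\lambda-1}{2}.
\]
Note that $(\kappa+\lambda-1)/2$ is indeed an integer: the paper has already observed (just after \eqref{E:typesdef}) that $\kappa$ and $\lambda$ have opposite parities, otherwise $q = \kappa p+\lambda$ would be even.

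Next I would compare this with the definition $q' = \kappa' p' + \lambda'$ from \eqref{E:defs2}, which yields
\[
\lambda' \;=\; \frac{\kappa+\lambda-1}{2} - (\kappa'-\kappa)p',
\]
establishing the lemma. There is no real obstacle here — everything is a bookkeeping rearrangement — but one should remark that the formula is consistent with the prescribed range $0 \leq \lambda' \leq p'-1$: indeed, from $\kappa'\geq \kappa$ (noted just before \eqref{E:deftau}) and $\kappa + \lambda \leq 2p-1$ one can directly verify the bounds, which justifies that the right-hand side is the genuine remainder $\lambda'$ rather than some other representative modulo $p'$.
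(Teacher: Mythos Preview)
Your proof is correct and follows essentially the same direct computation as the paper: both derive $q' = \kappa p' + (\kappa+\lambda-1)/2$ from the definitions and then subtract $\kappa' p'$ to isolate $\lambda'$. Your closing remark about verifying the range $0\le\lambda'\le p'-1$ is unnecessary (the formula equals $\lambda'$ by the very definition $\lambda'=q'-\kappa'p'$), and in fact those bounds do not follow from $\kappa'\ge\kappa$ and $\kappa+\lambda\le 2p-1$ alone, so you should drop that paragraph.
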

\begin{proof}
By definition $q -\lambda = \kappa p$, from which we obtain
\begin{align}
(q-1) - \lambda &= \kappa (p-1) + \kappa -1 \notag \\
q' - \kappa p' &= \frac{\kappa + \lambda - 1}{2} \notag \\
q' - \kappa'p' &=  \frac{\kappa + \lambda - 1}{2}-(\kappa'-\kappa)p'. \notag
\end{align}
The left-hand side of the last equation is the definition of $\lambda'$.
\end{proof}

\begin{lemma}\label{L:prelims}
 If $\kappa + \lambda < p$, then: \quad (i) $\kappa = \kappa'$;\quad (ii) $\lambda' =  \displaystyle{\frac{\kappa + \lambda - 1}{2}} \geq 1$;\quad  (iii)
 $\displaystyle{\frac{p'}{\lambda'} < \frac{p}{\lambda}}.$
\end{lemma}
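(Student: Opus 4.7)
The plan is to read all three parts off Lemma~\ref{L:lambda'}, which gives the identity
$$
\lambda' \;=\; \frac{\kappa+\lambda-1}{2} - (\kappa'-\kappa)p',
$$
together with the already-noted inequality $\kappa' \geq \kappa$ and the fact that $\kappa$ and $\lambda$ have opposite parity (forced by primality of $q$).

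For (i), I would argue by contradiction: suppose $\kappa' > \kappa$, so $\kappa'-\kappa \geq 1$.  Substituting into Lemma~\ref{L:lambda'} and using $p' = (p-1)/2$ gives
$$
\lambda' \;\leq\; \frac{\kappa+\lambda-1}{2} - p' \;=\; \frac{\kappa+\lambda-p}{2},
$$
which is strictly negative under the hypothesis $\kappa+\lambda < p$.  Since $\lambda' \geq 0$ by definition \eqref{E:defs2}, this is impossible, hence $\kappa'=\kappa$.  Once (i) is established, Lemma~\ref{L:lambda'} immediately collapses to $\lambda' = (\kappa+\lambda-1)/2$, the equality in (ii).  For the inequality $\lambda' \geq 1$, I would note that $\kappa \geq 1$ (because $q>p$) and $\lambda \geq 1$ (by \eqref{E:defs1}), so $\kappa+\lambda \geq 2$; the opposite-parity observation forces $\kappa+\lambda$ to be odd, hence $\kappa+\lambda \geq 3$, giving $\lambda' \geq 1$.

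For (iii), I would simply cross-multiply.  The inequality $p'/\lambda' < p/\lambda$ is equivalent to $p'\lambda < p\lambda'$, and substituting the value of $\lambda'$ from (ii) together with $p' = (p-1)/2$ reduces this (after clearing the factor of $2$) to
$$
(p-1)\lambda \;<\; p(\kappa+\lambda-1),
$$
which simplifies to $p-\lambda < p\kappa$.  Since $\kappa \geq 1$ and $\lambda \geq 1$, the right-hand side is at least $p$ while the left-hand side is at most $p-1$, so the inequality holds trivially.

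I do not anticipate any real obstacle here: all three parts follow from Lemma~\ref{L:lambda'} and the basic parity/size constraints already recorded in Section~\ref{S:results}.  The only mild subtlety is remembering to invoke the opposite-parity property of $(\kappa,\lambda)$ in (ii) to upgrade $\kappa+\lambda \geq 2$ to $\kappa+\lambda \geq 3$; without it one would only get $\lambda' \geq 1/2$, which is not quite the integer bound needed.
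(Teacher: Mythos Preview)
Your proof is correct and follows essentially the same route as the paper: all three parts are read off Lemma~\ref{L:lambda'} together with the constraints $\kappa'\geq\kappa$, $\lambda'\geq 0$, and the parity of $\kappa+\lambda$. The only cosmetic differences are that the paper phrases (i) as a direct deduction from $\lambda' < p' - (\kappa'-\kappa)p'$ rather than a contradiction, and runs (iii) by contradiction rather than directly, but the underlying algebra is identical.
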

\begin{proof}
Using the formula for $\lambda'$  given in Lemma~\ref{L:lambda'} and the assumption that $\kappa + \lambda < p$, we have $\lambda' < p'-(\kappa' - \kappa)p'$.  Since $\kappa' - \kappa \geq 0$ and $\lambda' \geq 0$, the only possibility is  (i).  The  equality in (ii) follows from (i) and Lemma~\ref{L:lambda'}.  The right-hand inequality follows from  $\kappa + \lambda \geq 3$.   To prove (iii), suppose that $\frac{p'}{\lambda'} \geq \frac{p}{\lambda}$.  Then $p'\lambda \geq p \lambda'$, and hence, using (ii),
\begin{align}
2p'\lambda &\geq p(2\lambda ') \notag \\
(p-1)\lambda &\geq p(\kappa + \lambda - 1) \notag \\
-\lambda &\geq p(\kappa - 1) \geq 0, \notag
\end{align}
a contradiction.
\end{proof}

 Suppose that $\gb=\fr(x,y,z,w)$ with $x,y,z,w$ nonnegative.   Using Proposition~\ref{P:system} with $(x_0, y_0, z_0, w_0)=(p'-1, p-1, \kappa, -1-\lambda)$, there exists an integer $l$ such that
\begin{equation}\label{E:2ndsystem}
\biggl\{
 \begin{array}{rcl}
 (p'-1)p +(p-2)p'-lq' &=& xp + (y+w)p'  \\
\kappa p-1- \lambda  + lq &=& zp + w.
  \end{array}
  \end{equation}
The second equation implies $l \geq 0$ (and the first  that $l$ cannot be too large).  Imitating the argument leading from \eqref{E:gkappasystem} to \eqref{E:<p-2},  we see that there exists an integer $t \geq 0$ such that
\begin{align}
x&=p'-1-l\lambda' +tp' \label{E:(a)} \\
y+w &= p-2 - l +\lambda (l-1) - tp. \label{E:(bb)}
\end{align}
(We used $B$ as defined at \eqref{E:B}, but with $\kappa'-\kappa = 0$.)  From the second equation of   \eqref{E:2ndsystem} (putting $q=\kappa p + \lambda$),   we see that $w \equiv (l-1)\lambda - 1$ (mod $p$).  Then \eqref{E:(bb)} yields $y \equiv p-1-l$ (mod $p$).  Hence there exist $\mu, \nu \in {\Bbb Z}$ such that
\begin{align}
y &= \nu p + p-1 - l \notag \\
w & = \mu p + (l-1)\lambda - 1.  \label{E:wdef}
\end{align}
By \eqref{E:(bb)}, $\mu + \nu = -t$.   $\nu \geq 0$ from the assumption that $y\geq 0$.  Provided that $l \leq p-1$ (we shall see shortly that this assumption is justified), we may add a suitable multiple of \eqref{E:trivequiv} to $(x,y,z,w)$, and so assume $\nu = 0$.  Then $\mu = -t$.  From \eqref{E:wdef} and the second equation of \eqref{E:2ndsystem},
\begin{align}
z & = (l+1)\kappa +t. \notag
\end{align}

Thus  a quadruple representing $\gb$ has the general form
\begin{equation}\label{E:g1rep}
\begin{array}{rcl}
x  &=& p'-1-l\lambda' +tp' \qquad (0 \leq t, \quad 0\leq l \leq p-1) \\
y  &=& p-(l+1)  \\
z  &=&(l+1)\kappa + t \\
w & =& -tp + (l-1)\lambda - 1.
\end{array}
\end{equation}
($t=l=0$ yields the defining representation  of $\gb$.)

\begin{prop}\label{P:3} $\gb$ is representable if and only if the pair is of Type I.
\end{prop}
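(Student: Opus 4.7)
The parametrization \eqref{E:g1rep} already covers all integer representations $\gb=\fr(x,y,z,w)$ via pairs $(l,t)$ with $0\le l\le p-1$ and $t\ge 0$. Since $z=(l+1)\kappa+t\ge 0$ is automatic and $y=p-1-l\ge 0$ is built into the range of $l$, existence of a nonnegative representation reduces to finding such a pair satisfying
\[
(t+1)p'\ge l\lambda'+1\quad\text{(from $x\ge 0$)}\qquad\text{and}\qquad (l-1)\lambda\ge tp+1\quad\text{(from $w\ge 0$)};
\]
the second inequality forces $l\ge 2$.

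For sufficiency, I would assume the pair is Type I and try the canonical choice $(l,t)=(\tau+2,\tau)$. The $w$-inequality then becomes $(\tau+1)\lambda\ge \tau p+1$, equivalent to $p/\lambda<(\tau+1)/\tau$, which is part of the definition of $\tau$ (and trivial when $\tau=0$). The $x$-inequality becomes $(\tau+1)p'\ge(\tau+2)\lambda'+1$, which is precisely the Type I condition $(\tau+2)/(\tau+1)<p'/\lambda'$. Finally, $y=p-\tau-3\ge 0$ follows from $\tau<\lambda\le p-3$, where the upper bound on $\lambda$ comes from $\kappa+\lambda$ odd and $<p$ combined with $\kappa\ge 1$.

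For necessity, I would start from any $(l,t)$ yielding a nonnegative quadruple, so in particular $l\ge 2$. For $t\ge 1$, the $w$-inequality gives $(l-1)/t>p/\lambda$ strictly, and the defining strict inequality $p/\lambda>(\tau+2)/(\tau+1)$ then yields $(l-1)/t>(\tau+2)/(\tau+1)$, hence by integrality
\[
(l-1)(\tau+1)\ge t(\tau+2)+1.
\]
For $t=0$ the inequality reduces to $l\ge 2$, which we already have. Adding $\tau+1$ to both sides rearranges to $l(\tau+1)\ge(t+1)(\tau+2)$. Combining with the strict $x$-inequality $(t+1)p'>l\lambda'$ then gives $(\tau+1)p'>(\tau+2)\lambda'$, i.e., Type I; the degenerate case $\lambda'=0$ makes Type I automatic and can be handled separately.

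The main obstacle is the careful bookkeeping between strict and nonstrict inequalities together with the handling of the boundary cases $t=0$, $\tau=0$, and $\lambda'=0$. The underlying content is that the rational ratio $l/(t+1)$ encoded in any admissible representation must lie above the threshold $(\tau+2)/(\tau+1)$ dictated by the position of $p/\lambda$, and this inequality transfers through the $x$-constraint to $p'/\lambda'$, producing exactly the Type I dichotomy.
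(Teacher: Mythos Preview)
Your proof is correct and follows essentially the same approach as the paper: both directions use the parametrization \eqref{E:g1rep}, with sufficiency via the explicit choice $(l,t)=(\tau+2,\tau)$ and necessity by playing the $x$- and $w$-constraints against the defining inequalities of $\tau$. Your necessity is phrased as a direct implication---deriving the integer inequality $l(\tau+1)\ge(t+1)(\tau+2)$ from the $w$-side and combining with $(t+1)p'>l\lambda'$---whereas the paper argues by contradiction under the Type~II hypothesis, but this is a cosmetic rearrangement; incidentally, the $\lambda'=0$ case you flag cannot arise, since $\lambda'\ge 1$ by Lemma~\ref{L:prelims}(ii).
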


\begin{proof}
If the pair is of Type I, let $t=\tau$ and $l=\tau + 2$ in \eqref{E:g1rep}.
Then $x =(\tau +1)p' - (\tau + 2) \lambda'-1$ is nonnegative as a  consequence of $\frac{\tau+2}{\tau+1} <\frac{p'}{\lambda'}$, and $w=-\tau p + (\tau + 1) \lambda -1$ is nonnegative as a consequence of  $\frac{p}{\lambda} < \frac{\tau+1}{\tau}$.  Obviously $z \geq 0$.  It remains only to verify that $l \leq p-1$, so that $y \geq 0$.   $\kappa + \lambda < p$ implies  $\lambda \leq p-3$, and $\tau < \lambda$, so $l=\tau+2<\lambda + 2 \leq p-1$.

Suppose the pair is of Type II and $t$, $l$ are nonnegative integers making \eqref{E:g1rep} a nonnegative quadruple.    $x \geq 0$, $w \geq 0$ imply, respectively,
\begin{equation}\notag
\frac{l}{t+1}<\frac{p'}{\lambda'}, \quad\text{and}\quad
\frac{p}{\lambda} < \frac{l-1}{t}.
\end{equation}
It follows that $l > t+1$, and in particular,
\begin{equation}\label{E:squeeze}
\frac{t+2}{t+1} \leq \frac{l}{t+1}<\frac{p'}{\lambda'}, \quad\text{and}\quad
\frac{p}{\lambda} < \frac{t+1}{t}<\frac{l-1}{t}.
\end{equation}
Since the pair is of Type II, the left-hand inequality  implies $t > \tau$, while the right-hand inequality, by the definition of $\tau$, implies  that $t \leq \tau$, a contradiction.
\end{proof}

From Corollary~\ref{C:5} and Proposition~\ref{P:3}, we obtain

\begin{cor}\label{C:6}
 If the pair is of Type II  with $\kappa + \lambda \leq p-\lambda$, then $g=\gb$.  If the pair is of Type I  with $\kappa + \lambda \leq p-\lambda$, then $g<\gb$.
 \end{cor}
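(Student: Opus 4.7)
The plan is to derive this corollary essentially by combining the two results immediately preceding it, namely Corollary~\ref{C:5} and Proposition~\ref{P:3}. Recall that the Frobenius number $g$ is by definition the largest non-representable positive integer, so to pin it down I only need (a) an upper bound on $g$ and (b) information about whether the candidate value is representable.

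First I would handle the Type II case under the hypothesis $\kappa + \lambda \leq p - \lambda$. Corollary~\ref{C:5} supplies the upper bound $g \leq \gb$. Proposition~\ref{P:3} tells us that for a Type II pair $\gb$ itself is \emph{not} representable. Since $g$ is the largest non-representable integer, any non-representable integer must be $\leq g$, hence $\gb \leq g$. Combined with $g \leq \gb$ this gives $g = \gb$.

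Next I would handle the Type I case under the same hypothesis $\kappa + \lambda \leq p - \lambda$. Again Corollary~\ref{C:5} gives $g \leq \gb$. But now Proposition~\ref{P:3} says that $\gb$ \emph{is} representable, so $\gb \neq g$ (the Frobenius number is non-representable by definition). Hence $g < \gb$, as required.

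There is no real obstacle here — the work has already been done. The only tiny point worth remarking on is that the statement assumes implicitly that we are in a regime where the notions of Type I/Type II even apply, i.e.\ $\kappa + \lambda < p$; this is ensured by the hypothesis $\kappa + \lambda \leq p - \lambda$ together with $\lambda \geq 1$. So the entire proof is a two-line argument citing Corollary~\ref{C:5} and Proposition~\ref{P:3}.
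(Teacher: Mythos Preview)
Your argument is correct and matches the paper's own approach exactly: the paper simply states that the corollary follows from Corollary~\ref{C:5} and Proposition~\ref{P:3}, which is precisely the combination you spell out. Your parenthetical observation that $\kappa+\lambda\leq p-\lambda$ forces $\kappa+\lambda<p$ (so that the Type~I/II dichotomy applies) is a nice clarification the paper leaves implicit.
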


The next proposition treats the remaining Type II pairs, and completes the proofs of all statements regarding $\ga$ and $\gb$ in Theorems~\ref{T1:Main}, \ref{T4:Main} and \ref{T5:Main}.

\begin{prop}\label{P:4} If the pair is of Type II with $\kappa + \lambda>p - \lambda$, then $\gb + \lambda'$ is not representable and hence $g>\gb$.
\end{prop}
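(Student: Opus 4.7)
The plan is to mirror the proof of Proposition~\ref{P:3} (for $\gb$) as closely as possible, with the extra shift $+\lambda'$ tightening the $w$-constraint by an amount controlled by $\kappa$; the extra hypothesis $\kappa+\lambda>p-\lambda$ is exactly what makes that shift lethal under Type II.

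First I would produce an integer (not necessarily nonnegative) representation of $\gb+\lambda'$. Since Type II forces $\kappa+\lambda<p$, Lemma~\ref{L:prelims} gives $\kappa=\kappa'$, and a short calculation verifies $\fr(0,-1,\kappa,1-\kappa)=q'-\kappa p'=\lambda'$. Adding this to the defining representation of $\gb$ gives
$$\gb + \lambda' = \fr(p'-1,\; p-2,\; 2\kappa,\; -\lambda-\kappa).$$

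Suppose for contradiction that $\gb+\lambda' = \fr(x,y,z,w)$ with $x,y,z,w\ge 0$. Applying Proposition~\ref{P:system} to the pair of representations above gives an integer $l$ satisfying a system analogous to \eqref{E:2ndsystem} but with the constant terms shifted by $\lambda'$ on the left-hand side. Using $q'=(1-\lambda)p'+\lambda'p$ (via $\kappa=\kappa'$, exactly as in Section~\ref{S:repg0}) and Lemma~\ref{L:rigid}, together with $q=\kappa p+\lambda$ in the second equation, one derives a two-parameter family of integer solutions: for some $t,s\in\mathbb{Z}$,
\begin{align*}
x &= p'-1-l\lambda'+tp', &
y &= p-2-l-sp,\\
z &= (l+2)\kappa + t - s, &
w &= (l-1)\lambda - \kappa + (s-t)p.
\end{align*}
The crucial feature, compared with \eqref{E:g1rep}, is the occurrence of $(l+2)\kappa$ in $z$ and the extra $-\kappa$ in $w$. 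Adding a suitable multiple of $e_0=(p',-p,0,0)$ lets us reduce to the case $s=0$ (so long as $l\le p-2$, which the $y\ge 0$ constraint forces except in a negligible boundary case handled separately).

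With $s=0$, nonnegativity of $x$ and $w$ yields
$$\frac{l}{t+1} \;<\; \frac{p'}{\lambda'}\qquad\text{and, for }t\ge 1,\qquad \frac{p}{\lambda} \;<\; \frac{l-1}{t} - \frac{\kappa}{t\lambda},$$
along with $l\ge t+2$ (as in Proposition~\ref{P:3}). Combined with the Type II hypothesis $p'/\lambda' \le (\tau+2)/(\tau+1)$, the first inequality forces $t\ge \tau+1$ (strictly more than Prop.~\ref{P:3} allowed, because the parameterization has changed accordingly). The second inequality, after clearing denominators, reads $(l-1)\lambda - tp \ge \kappa+1$; using $\kappa>p-2\lambda$ (equivalent to the hypothesis $\kappa+\lambda>p-\lambda$), this in turn forces $t\le \tau$ via the definition of $\tau$ in \eqref{E:deftau}. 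The two bounds on $t$ are incompatible, giving the required contradiction. The cases $t=0$ and $s<0$ (or $l=p-1$) are disposed of directly by substitution, using $\kappa+\lambda>p-\lambda$ and $\kappa+\lambda<p$ to rule out each subcase, much as in the closing paragraph of the proof of Proposition~\ref{P:5}.

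The main obstacle is booking the correct bookkeeping in Step 3: the doubled $\kappa$ in $z$ and the new $-\kappa$ in $w$ shift every estimate by $O(\kappa)$, and one must verify that the hypothesis $\kappa+\lambda>p-\lambda$ is \emph{tight} enough to close the window between the $x$-inequality and the $w$-inequality, while also ruling out the degenerate parameter values ($t=0$, $s\ne 0$, large $l$) where the argument based on the ratios $p/\lambda$, $p'/\lambda'$ does not immediately apply. Once those corner cases are disposed of, the contradiction between $t\ge\tau+1$ and $t\le\tau$ runs exactly parallel to the proof of Proposition~\ref{P:3}.
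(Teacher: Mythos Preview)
Your overall strategy --- parametrize all representations and extract the $t>\tau$ versus $t\le\tau$ contradiction from Proposition~\ref{P:3} --- is the right one and is exactly what the paper does. But the paper's execution is far shorter: rather than re-deriving the general form from a new base point $(p'-1,p-2,2\kappa,-\lambda-\kappa)$, it simply writes $\gb+\lambda'=\gb+\lambda'\cdot\fr(1,0,0,-2)$ and adds $(\lambda',0,0,-2\lambda')$ to the already-established general form \eqref{E:g1rep}. Since $2\lambda'=\kappa+\lambda-1$, this amounts to replacing $l$ by $l-1$ in the two key inequalities, and the contradiction from Proposition~\ref{P:3} then applies verbatim. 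No extra parameter $s$ is introduced, and no separate corner-case analysis is written out.

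Your placement of the hypothesis $\kappa+\lambda>p-\lambda$ is where the proposal actually breaks. You claim that combining $(l-1)\lambda-tp\ge\kappa$ with $\kappa>p-2\lambda$ forces $t\le\tau$; but the combination gives only $(l+1)\lambda>(t+1)p$, and together with your first inequality $l/(t+1)<p'/\lambda'\le(\tau+2)/(\tau+1)$ this reduces to the tautology $\tau+1>0$, not to $t\le\tau$. The paper obtains $t\le\tau$ \emph{without} the hypothesis, exactly as in Proposition~\ref{P:3}, using only $w\ge0$ (hence $p/\lambda<(l-2)/t$, since $\kappa\ge1$) and the definition of $\tau$. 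The hypothesis $\kappa+\lambda>p-\lambda$ is genuinely needed, but only to exclude one degenerate parameter value: in the paper's parametrization it rules out $(l,t)=(0,-1)$, which would otherwise give the nonnegative quadruple with $w=p-2\lambda-\kappa$. In your parametrization this is $(l,t,s)=(-1,-1,0)$, a case you do not list among your corner cases and which your $e_0$-reduction does not reach.
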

\begin{proof}
 Suppose  $\gb + \lambda' = \fr(x,y,z,w)$ for a nonnegative quadruple $(x,y,z,w)$.  A general form for $(x,y,z,w)$ is produced from  \eqref{E:g1rep} by using \eqref{E:fact1} to write
$
\gb + \lambda' = \gb + \lambda'\cdot  \fr(1,0,0,-2).
$
Reducing the fourth coordinate of \eqref{E:g1rep} by $2\lambda'=\kappa + \lambda -1$ (Lemma~\ref{L:prelims}(ii)), and increasing the first by $\lambda'$, we obtain
\begin{equation}\label{E:g1+lambdaprimerep}
\begin{array}{rcl}
x  &=& p'-1-(l-1)\lambda' +tp' \qquad (0 \leq t, \quad 0\leq l \leq p-1) \\
y  &=& p-(l+1)  \\
z  &=&(l+1)\kappa + t \\
w & =& -tp + (l-2)\lambda - \kappa.
\end{array}
\end{equation}
 The assumptions $x \geq 0$ and $w \geq 0$ imply almost the same inequalities as at \eqref{E:squeeze}, except that $l$ is  replaced $l-1$ where it occurs.  Regardless, we arrive at the same contradiction ($t>\tau$ and $t \leq \tau$) which concluded the proof of Proposition~\ref{P:3}.
 \end{proof}

\begin{rem}\label{R:white}
For the remaining Type I pairs (having $\kappa+\lambda > p-\lambda$ and colored white in Figure 1), both $g<\gb$ and $g>\gb$ are possible.  A patient reader can verify, for example,  that $g< \gb$ for the pair $(11, 17)$ and $g > \gb$ for the pair $(29, 103)$.
\end{rem}

\section{The lower bound}

It remains to prove that $\gc$ is a universal lower bound on the Frobenius number, and that it is sharp if $p=3$ or if $(p,q)$ is a twin prime pair.
\begin{prop} $\gc$ is not representable for any pair with $\kappa + \lambda < p$.
\end{prop}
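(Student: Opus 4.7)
The plan is to mimic the strategy of Proposition~\ref{P:2} by assuming for contradiction that $\gc$ has a nonnegative representation, invoking Proposition~\ref{P:system} with the base quadruple $(x_0,y_0,z_0,w_0) = (p'-1, p-1, \kappa, -(p-2))$ (which represents $\gc$ since $\gc = \ga - (p-3)d_3$), and then showing the resulting constraints are inconsistent. Proposition~\ref{P:system} produces an integer $l$ satisfying
\begin{align*}
(p'-1)p + p' - lq' &= xp + (y+w)p', \\
(\kappa-1)p + 2 + lq &= zp + w,
\end{align*}
and the second equation together with $z,w \geq 0$ and $\lambda \geq 1$ immediately forces $l \geq 0$.

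Since $\kappa+\lambda<p$, Lemma~\ref{L:prelims}(i) gives $\kappa=\kappa'$, so \eqref{E:q'} simplifies to $q' = (1-\lambda)p' + \lambda' p$. Substituting this into the first equation and applying Lemma~\ref{L:rigid} with $m=p$, $n=p'$ produces an integer $t$ with $x = (t+1)p' - l\lambda' - 1$ and $y+w = 1 - l + l\lambda - tp$. The second system equation gives $w \equiv l\lambda + 2 \pmod p$, so I can write $w = l\lambda + 2 + \mu p$ and $y = \nu p + (p-1-l)$ for integers $\mu,\nu$ linked by $\nu+\mu = -t-1$. A direct check (handling $0 \leq l \leq p-1$ and $l \geq p$ separately) shows that $y \geq 0$ forces $\nu \geq 0$ in every case.

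Translating $x \geq 0$ and $w \geq 0$ via the identity $t+1 = -(\nu+\mu)$ gives
$$-(\nu+\mu)p' \geq l\lambda' + 1 \qquad \text{and} \qquad \mu p \geq -(l\lambda + 2).$$
Multiplying the first inequality by $p$, the second by $p'$, and adding eliminates $\mu$ to produce
$$-\nu p p' \geq l(\lambda' p - \lambda p') + (p - 2p') = l(\lambda' p - \lambda p') + 1.$$
By Lemma~\ref{L:prelims}(iii), $\lambda' p - \lambda p' > 0$, so the right-hand side is at least $1$, forcing $\nu < 0$ and contradicting $\nu \geq 0$.

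The main obstacle will be the bookkeeping of the four auxiliary integers $l,t,\mu,\nu$, and in particular making sure the conclusion $\nu \geq 0$ survives uniformly in $l$; the case $l \geq p$ requires slightly more care than the parametrization \eqref{E:g1rep} covers directly. Once that is under control, the contradiction drops out of Lemma~\ref{L:prelims}(iii) via the same multiplication-and-addition trick that drove the argument for Proposition~\ref{P:2}.
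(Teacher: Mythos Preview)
Your argument is correct and follows the same architecture as the paper's proof: apply Proposition~\ref{P:system} to the base quadruple $(p'-1,p-1,\kappa,2-p)$, use Lemma~\ref{L:rigid} and \eqref{E:q'} (with $\kappa'=\kappa$) to extract the parameters $l,t,\mu,\nu$, establish $l\ge 0$ and $\nu\ge 0$, and then derive a contradiction from the inequalities coming from $x\ge 0$ and $w\ge 0$ together with Lemma~\ref{L:prelims}(iii). The one substantive difference is in the endgame: the paper bounds $l$ from both sides, sets $\nu$ to its minimum, and then splits into the cases $\kappa=1$ and $\kappa>1$, whereas you take the linear combination $p\cdot(\text{first inequality})+p'\cdot(\text{second inequality})$ to eliminate $\mu$ and obtain $-\nu pp' \ge l(\lambda'p-\lambda p')+1$ directly. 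Your version is cleaner---it avoids the case split entirely and makes the role of the strict inequality $\lambda'p>\lambda p'$ from Lemma~\ref{L:prelims}(iii) completely transparent---while the paper's version has the minor advantage of exhibiting the full parametrization \eqref{E:g2rep} of all quadruples representing $\gc$, which parallels \eqref{E:g1rep} and could be reused elsewhere.
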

\begin{proof}
Suppose $(p,q)$ is a pair for which $\gc$ is representable.   Using Proposition~\ref{P:system} with $(x_0, y_0, z_0, w_0)=(p'-1, p-1, \kappa, 2-p)$, there exists an integer $l$ such that
\begin{equation}\label{E:lowerboundsystem}
\biggl\{
 \begin{array}{rcl}
 (p'-1)p +p'-lq' &=& xp + (y+w)p'  \\
\kappa p +2-p+ lq &=& zp + w,
  \end{array}
  \end{equation}
    for nonnegative integers $x,y,z,w$.  The second equation implies  $l \geq 0$.    Collecting the multiples of $p$ and the multiples of $p'$ on the left-hand side of the first equation, and using \eqref{E:q'} and \eqref{E:B} with $\kappa = \kappa'$, and Lemma~\ref{L:rigid}, we see that there exists  $t \in {\Bbb Z}$ such that
\begin{align}
x&=p'-1-l\lambda' +tp' \label{E:xand}  \\
y+w &= 1 + l(\lambda-1)-tp.\label{E:y+w}
\end{align}
\eqref{E:xand} implies $t\geq 0$.
Putting $q=\kappa p + \lambda$ into the second equation of \eqref{E:lowerboundsystem},
\begin{equation}\label{E:zp+w}
p(\kappa(l+1)-1) + l\lambda + 2 = zp + w.
\end{equation}
It follows that  $w \equiv l\lambda +2$ (mod $p$), and, using  \eqref{E:y+w}, that $y \equiv -(l+1)$ (mod $p$).  Hence there exist $\mu, \nu \in {\Bbb Z}$ such that
\begin{align}\notag
y & = \nu p - (l+1) \qquad (\nu >0) \notag \\
w &= \mu p + l\lambda + 2. \notag \\
\end{align}
By  \eqref{E:y+w}, $\mu = -t + \nu$.  From \eqref{E:zp+w}, $z=\kappa(l+1) -1 + t + \nu.$

    Thus  a quadruple representing $\gc$ has the general form
\begin{equation}\label{E:g2rep}
\begin{array}{rcl}
x  &=& p'-1-l\lambda' +tp' \qquad (t, l \geq 0) \\
y  &=& \nu p-(l+1) \qquad \qquad (\nu > 0) \\
z  &=&\kappa(l+1)-1 + t+\nu \\
w & =& -(t+ \nu)p + l\lambda +2.
\end{array}
\end{equation}
($t=l=0$, $\nu = 1$ yields the defining representation of $\gc$.)  The requirements $x \geq 0$ and $w \geq 0$ imply
\begin{equation}\notag
\frac{(t+\nu)p - 2}{\lambda} \leq l \leq \frac{(t+1)p' - 1}{\lambda'}.
\end{equation}
We show that this leads to a contradiction.  Minimizing the left hand member of the inequality by taking $\nu = 1$, we obtain
\begin{equation}\label{E:g2ineq}
\frac{(t+1)p - 2}{\lambda} \leq \frac{(t+1)p' - 1}{\lambda'}.
\end{equation}
Since $\kappa + \lambda < p$, $\lambda' = (\lambda + (\kappa - 1))/2 \geq \lambda/2$,  and hence $1/\lambda' \leq 2/\lambda$, with equality if and only if $\kappa = 1$.  Rearranging  \eqref{E:g2ineq}, we obtain
$$
(t+1)\biggr(\frac{p}{\lambda} - \frac{p'}{\lambda'}\biggr) \leq \biggl(\frac{2}{\lambda} - \frac{1}{\lambda'}\biggr),
$$
 which is a contradiction if $\kappa =1$, since then the left-hand side is positive (Lemma~\ref{L:prelims} (iii)), while  the right-hand side is $0$.  Hence assume $\kappa>1$,   and multiply both sides by $\lambda\lambda'>0$.  This yields
$$
 (t+1)(p\lambda' - p'\lambda) \leq 2\lambda' - \lambda.
$$
    The right hand side is equal to $\kappa - 1>0$, and the left-hand side can be rewritten as
    $$
    \frac{1}{2}(t+1)(p(\kappa + \lambda - 1)-(p-1)\lambda),
    $$
    which simplifies to
   $$
     \frac{1}{2}(t+1)(p(\kappa  - 1)+\lambda).
   $$
    Thus we have
   $$
    \frac{1}{2}(t+1)(p(\kappa  - 1)) < \kappa - 1.
    $$
    Canceling the non-zero factor $\kappa -1$ leads to the contradiction
    $$(t+1)p < 2.$$
\end{proof}

Thus $\gc \leq g$ for all pairs.  The bound is attained if $p=3$, by Theorem~\ref{T1:Main} (iii).  The next proposition shows that the bound is also attained for twin prime pairs.

\begin{prop}\label{P:twinrep} If $(p,q)$ is a twin prime pair, all integers $>\gc$ are representable.
\end{prop}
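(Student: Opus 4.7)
The plan is to follow the template of Proposition~\ref{P:1}, adapted to $\gc$. It suffices to exhibit a nonnegative quadruple representing every integer in $[\gc+1,\gc+d_1]$. For a twin prime pair the parameters collapse dramatically: $\kappa=\kappa'=1$, $\lambda=2$, $\lambda'=1$, and $q-p=2$. Subtracting $(p-3)d_3$ from the representation $\ga+1=\fr(0,p-1,\kappa-1,p-3)$ in \eqref{E:g0+1} (and using $\kappa=1$) yields the starting representation $\gc+1=\fr(0,p-1,0,0)$.

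For $t\in[0,d_1-1]$, I would parametrize $t=aq'+bp'+c$ as in \eqref{E:paramdef} and add to the starting quadruple the contributions from \eqref{E:factq'}--\eqref{E:fact1}, producing
\[
\gc+1+t \,=\, \fr(c,\ p-1-a,\ -b,\ a+b-2c).
\]
Since $\kappa'=1$, Lemma~\ref{L:t} forces $b\in\{0,1\}$, with $b=1$ entailing $c=0$ and $a\leq p-2$. Three cases arise naturally: (A) $b=0$ and $a\geq 2c$, where the tuple is already nonnegative; (B) $b=0$ and $a<2c$, where only the fourth coordinate is negative; (C) $b=1$, where the third coordinate is $-1$.

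To rescue cases (B) and (C) I would add zero-sum integer combinations of $e_1$ and $e_2$. The key observation is that, since $q-p=\lambda=2$, the vector $(-1,-1,1,2)=e_1-e_2$ represents $0$. For case (B), add $k=\lceil c-a/2\rceil$ copies of $(-1,-1,1,2)$; the fourth coordinate becomes nonnegative by construction, and the bounds $c\leq p'-1$, $a\leq 2c-1$ keep the other coordinates nonnegative. For case (C), first add $e_1=(p',0,1,-p)$ to clear the negative third coordinate, then add $k=\lceil (p-1-a)/2\rceil$ copies of $(-1,-1,1,2)$ to repair the fourth; the constraint $0\leq a\leq p-2$ guarantees both $k\leq p'$ and $k\leq p-1-a$.

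The main obstacle is the bookkeeping in cases (B) and (C) --- in particular, verifying that the chosen ceiling $k$ simultaneously keeps all four coordinates nonnegative, and handling the parities of $a$ and $p-1-a$ inside those ceilings. Because twin-prime parameters are so tight, I expect these verifications to reduce to short explicit inequalities, with no need for the auxiliary machinery (Lemmas~\ref{L:A}--\ref{L:C}) that was required in the general situation of Section~\ref{S:representability>g0}.
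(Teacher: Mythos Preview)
Your proposal is correct. Both your argument and the paper's hinge on the same twin-prime identity --- in your packaging, the zero-sum vector $e_1-e_2=(-1,-1,1,2)$; in the paper's, the increment $\fr(0,-1,1,0)=d_2-d_1=1$ --- but they deploy it differently. The paper absorbs this identity directly into a bespoke parametrization $t=aq'+b+c$ with $b+c\leq p'$ (using $\fr(0,-1,1,0)=1$ as the ``$b$'' step), so the third coordinate is $b\geq 0$ from the outset and only the fourth coordinate $a-2c$ needs repair; the casework then splits on whether $a\leq p'$ or $a=p'+s$. You instead keep the standard parametrization \eqref{E:paramdef} and apply the zero-sum shift afterwards, trading a slightly different case list ($b=0$ versus $b=1$) for the advantage of reusing machinery already set up in Section~\ref{S:representability>g0}. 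Both routes are short precisely because the twin-prime parameters collapse. One small caveat: your claim $\kappa'=1$ fails for $p=3$ (there $p'=1$ forces $\kappa'=2$, $\lambda'=0$), but that boundary case is already handled since $\gc=\ga$ when $p=3$.
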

\begin{proof}  We adapt the proof of Proposition~\ref{P:1} (cf. Proposition~\ref{P:5}).  It suffices to show that the integers in the closed interval $[\gc+1, \gc +d_1]$ are representable.  We start with the representation  $\gc + 1=\fr(0,p-1, 0, 0)$, obtained from \eqref{E:g0+1} by subtracting $p-3$ from the fourth coordinate, and using the fact that $\kappa =1$.   For twin pairs, $q'=p'+1$ and hence, from \eqref{E:factq'} and \eqref{E:factp'}, we derive
\begin{equation}
\fr(0, -1,1,0)  = 1. \label{E:facttwin}
\end{equation}
Let an integer $t \in [0, d_1-1]$ be represented with the division algorithm as
\begin{equation}\notag
t= aq' + b + c,\quad \text{with} \quad 0 \leq a \leq p-1 \text{\ ($a$ \ maximal),}\quad b,c \geq 0, \quad b+c \leq p'.
\end{equation}
  The bound on $b+c$  comes from the maximality of $a$ and the fact that $q' = p' + 1$.  $a$ and $b+c$ are uniquely determined by  $t$ and conversely.   It follows from  \eqref{E:factq'}, \eqref{E:fact1} and \eqref{E:facttwin} that, for $t \in [0, d_1-1]$,
 $$
\gc + 1 + t = \fr(0, p-1, 0,0) + a\cdot\fr(0,-1,0,1) + b \cdot \fr(0,-1,1,0) + c\cdot \fr(1,0,0,-2).
$$
Thus $\gc+1+t =\fr(x,\ y,\ z,\ w)$,   where
\begin{align}
x  &= c\notag\\
y  &= p-1-(a+b)\notag\\
z  &= b \notag \\
w &  = a-2c. \notag
\end{align}
If $a\leq p'$, then $p-1-a \geq p'$ and we may assume $c=0$ by increasing $b$, if necessary, while maintaining $y \geq 0$.  In fact,  $y \geq p-1-(p' + p') = 0$, $w=a\geq 0$, and we have a  nonnegative quadruple.  Hence suppose   $a = p' + s$, $1 \leq s \leq p'$.  Let $b=p'-i$ and $c=p'-k$, $0 \leq i,k \leq p'$.    Since $b+c \leq p'$, $i+k \geq p'$.  We claim there is a choice of $i$ and $k$ making $(x,y,z,w)$ a non-negative quadruple.  Clearly $x,z \geq 0$ for all choices of $i,k$.  If $i \geq s$, $w=s+2k - p'$  $\geq i+k + k-p'$ $\geq p'+k-p' = k \geq 0$.  If $i < s$, put $i'=i + (s-i) = s$ and $k'=k-(s-i)$, so that  $i'+k' = i+k \geq p'$.  Let $b=p'-i'$ and $c=p'-k'$.  Then
\begin{align}
x  &= p'-k'>p'-k \geq 0\notag\\
y  &= i'-s = 0\notag\\
z  &= p'-i' = p'-s \geq 0 \notag \\
w &  = i'+2k'-p' \notag \\
& =i' + k' + k'-p' \notag \\
& \geq p'+k'-p' =k' \geq 0.\notag
&
\end{align}
\end{proof}

\begin{rem} We conjecture that $g=\gc$  {\rm  only if} $(p,q)$ is a twin prime pair.
\end{rem}

This completes the proofs of Theorems~\ref{T1:Main}, \ref{T4:Main} and \ref{T5:Main}.

\section{The largest non-genus of ${\mathbb Z}_{pq}$}\label{S:surfaces}

We return to the motivating question of determining the largest non genus $\nu_{pq}$ of  a ${\mathbb Z}_{pq}$ action (Problem II, Section~\ref{ss:groups}, $n=pq$).   We show that given $p>3$, the lower bound in \eqref{gpqbound} (and \eqref{E:groupineq}) is attained for all but finitely many $q>p$. This is Theorem \ref{T6:main} which we restate here.
\begin{theorem}\label{T:cyclic}
For primes $3<p<q$, with $\kappa + \lambda \geq p$ and $q \neq 2p-1,  3p-2$, the largest non-genus of ${\mathbb Z}_{pq}$ is
$$\nu_{pq} = \ga - pq + 1,$$
where $\ga$ is the integer defined at \eqref{E:keynums},  equal to the Frobenius number $g(\{d_0, d_1, d_2, d_3\})$.
\end{theorem}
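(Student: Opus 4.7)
Since $\kappa+\lambda \geq p$ places us in the hypothesis of Theorem~\ref{T1:Main}(ii), we have $g_{pq}=\ga$, so the lower bound in \eqref{gpqbound} immediately gives $\nu_{pq} \geq \ga - pq + 1$. The substantive task is the reverse inequality, which by Lemma~\ref{L:admissibility} (with $n=pq$, $s=2$) is equivalent to showing that every integer $m \geq \ga + 1$ admits at least one nonnegative representation $m=\fr(x,y,z,w)$ satisfying the four admissibility conditions
\[
 x+y+w \geq 1, \qquad x+z+w \geq 1, \qquad y+w \neq 1, \qquad z+w \neq 1.
\]

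First I would invoke Proposition~\ref{P:1}, which already supplies a nonnegative representation of every such $m$; what remains is to upgrade it to an admissible one. Because the set of inadmissible tuples is very thin---the vanishing of $x+y+w$ or $x+z+w$ forces $m$ to be a pure multiple of $d_2$ or $d_1$, and each of $y+w=1$, $z+w=1$ excludes only two ordered pairs---the strategy is to translate an initial representation by integer combinations of the kernel basis $\{e_0,e_1,e_2\}$ from \eqref{E:trivequiv}--\eqref{E:fact0q} until it lands in the admissible region while remaining nonnegative. I would step through the explicit representations of $\ga + 1 + t$ built in Section~\ref{S:representability>g0} via Lemmas~\ref{L:A}, \ref{L:B}, \ref{L:C}: for each of the three subcases, inspect the values of $y+w$ and $z+w$ coming from \eqref{E:x}--\eqref{E:w} (and their modifications in Case~L:A--L:C), identify precisely which quadruples of parameters $(a,b,c)$ (or of $(l,t,s)$) cause a failure, and exhibit a translate by some $\alpha e_0 + \beta e_1 + \gamma e_2$ that repairs the violation without driving a coordinate negative. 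Handling the pure-multiple cases in conditions (A)--(B) is comparatively easy, since a single application of $e_1$ or $e_2$ moves off the coordinate axis; the conditions $y+w=1$ and $z+w=1$ are subtler because adjusting $w$ via $e_1$ or $e_2$ simultaneously moves $x$ and $z$ or $y$.

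The hard part---and the source of the exceptions $q \neq 2p-1,\,3p-2$---is that these two prime pairs saturate the inequality $\kappa+\lambda \geq p$ at equality with $\kappa \in \{1,2\}$, so that $\kappa'-\kappa = 1$ and $\lambda' = 0$. The resulting rigidity of the representation lattice forces the translates that normally rescue a failing admissibility condition to push some coordinate negative. I expect the main effort to be in showing that for every other $q$ with $\kappa+\lambda \geq p$, either $\lambda' \geq 1$ or $\kappa \geq 3$, and these leave enough slack in the lattice to absorb the required adjustment. A by-product of the analysis should be the exhibition, for $q \in \{2p-1,\,3p-2\}$, of explicit integers $m > \ga$ with no admissible representation, confirming both that the stated hypothesis is sharp and that $\nu_{pq}$ strictly exceeds the lower bound of \eqref{gpqbound} for these two sporadic prime pairs.
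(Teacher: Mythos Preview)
Your plan is essentially the paper's own approach: revisit the explicit quadruples built in Section~\ref{S:representability>g0} (including the three subcases of Lemmas~\ref{L:A}--\ref{L:C}), check the admissibility conditions \eqref{cond1}--\eqref{cond2}, and repair any failures by adding a suitable $e(u,v)$. Two minor corrections to your heuristics: the conditions $x+y+w\neq 0$ and $x+z+w\neq 0$ turn out to hold automatically for the quadruples at \eqref{E:x}--\eqref{E:w} (one has $x+y+w\geq p-1$ and $x+z+w\geq \kappa$), so no fixing is needed there; and the genuine obstruction is $z+w=1$, which for $z\geq 0$ forces $\kappa\in\{1,2\}$---the exclusions $q\neq 2p-1,\,3p-2$ enter precisely because the repair $e(0,0)$ (resp.\ $e(0,1)$) requires $q'<2p'$ (resp.\ $q'<3p'$), not because of a general ``$\lambda'\geq 1$ or $\kappa\geq 3$'' dichotomy.
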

\begin{proof} We re-visit the argument in Section~\ref{S:representability>g0}, showing that the quadruples constructed there satisfy the admissibility conditions required by Lemma~\ref{L:admissibility}, or can be  altered  (by adding  integer linear combinations of the vectors $e_0, e_1, e_2$),  so as to satisfy them.  Bringing the notation in Lemma~\ref{L:admissibility} into accord with that introduced in Sections~\ref{S:intro} and \ref{S:results},   we put $p=p_1$, $q=p_2$,  and use $x,y,z,w$ and $d_0, d_1, d_2, d_3$ instead of $x_{00} ,x_{10}, x_{01},  x_{11}$, and  $d_{00}, d_{10}, d_{01}, d_{11}$,  respectively.     In this notation,  conditions \eqref{E:bincond1} and \eqref{E:bincond2} are
 \begin{align}
  y+w &\ne 1, \qquad  z+w \ne 1, \label{cond1}\\
     x+y+w & \ne 0, \qquad  x+z+w \ne 0. \label{cond2}
 \end{align}
It is convenient to replace the condition $y+w \ne 1$ with the stronger condition $y+w>1$.  A nonnegative quadruple satisfying \eqref{cond1}, \eqref{cond2} and $y+w \neq 0$  will be called  {\it strongly admissible}.  The extra condition is
 imposed so that if $(x,y,z,w)$ is strongly admissible, then $(x, y+1, z,w)$ is admissible.  With this guarantee, it is sufficient to produce strongly admissible representations of the integers in the closed interval $[\ga + 1, \ga + d_1]$.

 Suppose first that the quadruple $(x,y,z,w)$  as defined by \eqref{E:x} - \eqref{E:w} is nonnegative, that is,  assume $z\geq 0$.  One easily verifies that $x+y+w \geq p-1> 0$, $x+z+w \geq \kappa >0$, $y+w \geq p-1>1$.  It remains  to consider the possibility that
 \begin{equation}\notag
 z+w=\kappa - 1-b + p-3-2c+a+b = 1.
 \end{equation}
 This occurs if and only if
 \begin{enumerate}
 \item $\kappa = 1$ and $(a,b,c)=(1,b,p'-1)$; or
\item $\kappa = 2$ and $(a,b,c)=(0,b, p'-1)$.
  \end{enumerate}
 $b \leq \kappa-1$ by \eqref{E:z} and the assumption $z\geq 0$.
  Thus in (i), $b=0$.  The triple $(1,0,p'-1)$ corresponds to the inadmissible quadruple $(p'-1, p-2, 0, 1)$.  $\kappa = 1$ is equivalent to $p+2 \leq q \leq 2p-1$ or $q' \leq 2p'$.  We have excluded $q=2p-1$, so we may assume $q'<2p'$.
  It is easily verified that  $(p'-1, p-2, 0, 1) +
  e(0,0)$  is strongly admissible.  In case (ii),  $b=0$ or $1$ and  the two triples $(0,0,p'-1)$ and $(0,1,p'-1)$ correspond to the inadmissible quadruples
 \begin{equation}\label{E:2quads}
 (p'-1, p-1, 1, 0) \quad\text{and} \quad (p'-1, p-1,  0, 1),
  \end{equation}
  respectively.
 $\kappa =2$ is equivalent to $2p+1 \leq q \leq 3p-2$ or $p \leq q'\leq 3p'$.  Since we have excluded $q=3p-2$, we may assume $q'<3p'$.   Addition of $e(0,1)$ makes both quadruples in \eqref{E:2quads} strongly admissible.

 Now assume that $z<0$ in the quadruple $(x,y,z,w)$ defined at \eqref{E:x}-\eqref{E:w}.  We re-visit the proofs of Lemmas~\ref{L:A}, \ref{L:B} and \ref{L:C}.

 If $s <\kappa'-\kappa-1$, Lemma~\ref{L:A}(ii) produces the nonnegative quadruple
 \begin{align}
 x' &=c+(s+1)p' \notag\\
 y' &=p-1-a  \notag\\
 z' &=0  \notag \\
 w' &= p-3-2c +a +\kappa + s -(s+1)p. \notag
 \end{align}
 If this is inadmissible, $w'\leq 1$.  By \eqref{E:p-3-2c} and Lemma~\ref{L:A}(i),  $w'$ is the sum of  three nonnegative quantities:  $(p-3-2c)$, $a$, and $\kappa+s-(s+1)p$.    Since $p-3-2c$ is even, $w'\leq 1$ implies $p-3-2c=0$, equivalently, $c=p'-1$.  The other two quantities are either both $0$, or one is $0$ and the other $1$.  This yields three possible quadruples:  $(x', p-1, 0,0)$, which is strongly admissible, and
 \begin{equation}\label{E:three}
 (x', p-1-a, 0, 1), \quad a=0, 1,
 \end{equation}
 where $x'=p'-1 + (s+1)p' .$   We show that these two quadruples cannot arise under the assumed conditions.  They  are supposed to represent the integers
 \begin{equation}\notag
 \ga + aq'+ (\kappa + s)p' + p'-1, \quad a = 0, 1.
 \end{equation}
  Equating these two integers with the corresponding values of $\fr$ on the two quadruples in \eqref{E:three},  we obtain, for $a=0,1$,
 \begin{equation}\notag
 \fr(p'-1, p-1, \kappa, -1) + aq' + (\kappa + s)p' + p'-1 = \fr(p'-1 + (s+1)p', p-1-a, 0, 1).
 \end{equation}
By the linearity of $\fr$ this is equivalent to
\begin{align}
0 &=\fr(-(s+1)p', a, \kappa, -2) + aq' +(\kappa + s)p' + p'-1 \notag \\
&=-(s+1)p'd_0 +a(d_1 + q') + \kappa(d_2 + p') -2d_3 + (s+1)p'-1\notag \\
&=-(s+1)p'(d_0 - 1) + a(d_1+q') + \kappa(d_2 + p') - 2d_3 - 1. \label{E:last}
\end{align}
 The identities
\begin{equation}\label{E:identities}
d_1+q'=d_2+p' = d_3 \quad\text{and}\quad 2d_3 +1= d_0
\end{equation}
follow easily from the definitions of the $d_i$'s at \eqref{E:d's}.  Thus \eqref{E:last} is equivalent to
\begin{align}
d_0 &=-(s+1)p'(2d_3) + (a+\kappa)d_3 \notag\\
 &= (a + \kappa -2p'(s+1))d_3 \notag\\
 &=(a + \kappa - (p-1)(s+1))d_3\notag \\
 &=(a + \kappa +s - (s+1)p + 1)d_3.\label{E:last2}
\end{align}
The expression in parentheses on the right is equal to $w'+1=2$.  Thus \eqref{E:last2} is equivalent to
$d_0 = 2d_3$, contradicting the last identity in \eqref{E:identities}.

If $s=\kappa'-\kappa -1$, Lemma~\ref{L:B}(ii) produces the nonnegative quadruple
\begin{align}
 x' &=c-\lambda' \notag \\
 y' &=p-2-a \notag \\
 z' &=\kappa \notag\\
 w' &= p-3-2c +a +[\kappa'-1-(\kappa'-\kappa)p + \lambda]  \label{E:w'B}
 \end{align}
 We claim that the quantity in brackets in \eqref{E:w'B} is nonnegative.  This is a consequence of
 \begin{align}
 q' &\geq \kappa'p' \notag \\
 q-1 &\geq \kappa'(p-1) \notag \\
 \kappa'-1 &\geq \kappa'p - q \notag \\
 &=(\kappa'-\kappa)p + \kappa p - q \notag \\
 &=(\kappa'-\kappa)p - \lambda. \label{E:laststep}
 \end{align}
It follows that \eqref{E:w'B}  is the sum of the three nonnegative quantities
 $(p-3-2c)$, $a$, and $\kappa'-1 - (\kappa'-\kappa)p + \lambda$.  If the quadruple is inadmissible, $w' \leq 1$ and hence the even number $p-3-2c=0$, or equivalently, $c=p'-1$. The other two quantities are either both $0$, or one of them is $1$ and the other is $0$.   If $\kappa'-1 - (\kappa'-\kappa)p + \lambda = 1$, then $a=0$, and we have the strongly admissible quadruple $
(p'-1-\lambda', p-2, \kappa, 1)$.
  If $\kappa'-1 - (\kappa'-\kappa)p + \lambda = 0$, then reversing the chain of inequalities ending at \eqref{E:laststep}, with inequalities replaced by equalities,  $q'=\kappa'p'$.  In that case, $\lambda'=0$ and we have the quadruples
  \begin{equation}\label{E:two}
(p'-1, p-2-a, \kappa, 0),  \quad a=0,1,
\end{equation}
which are strongly admissible if $\kappa > 1$  ($p>3$ is required here).
If $\kappa = 1$, the quadruples in \eqref{E:two}  are inadmissible, but   $\kappa + \lambda \geq p$ implies $\lambda = p-1$ and $q=2p-1$, which is excluded.

If $s=\kappa'-\kappa$,  Lemma~\ref{L:C} produces the nonnegative quadruple \eqref{E:x'C} -\eqref{E:w'C}.  If this is inadmissible, $w' \leq 1$.  By \eqref{E:lambdaC}, $w'$ is the sum of two nonnegative quantities, $2(\lambda'-c) -2$ and $a$.  Since the former is even, it must be $0$.  Equivalently, $c=\lambda'-1$.  Hence there are two quadruples corresponding to $a=0,1$:
\begin{equation}\label{E:twomore}
(p'-1, p-2, \kappa, 0),  \quad (p'-1, p-3, \kappa, 1).
\end{equation}
The latter is strongly admissible ($p>3$ is required), as is the former if $\kappa > 1$.  If $\kappa =1$, $(p'-1, p-2, \kappa, 0)$  is inadmissible.  Then $q' \leq 2p'$, and in fact, $q'<2p'$ since $q \ne 2p-1$.  In that case, addition of $e(0,0)$ yields a strongly admissible quadruple.
This concludes the proof of Theorem~\ref{T:cyclic}.
\end{proof}

For a twin prime pair, it is not difficult to show that the integer $\fr(0,p-1,1,0)$ has no other nonnegative representation, and hence no admissible representation.  A straightforward argument, similar to the proof of Proposition~\ref{P:twinrep}, shows that the next $d_1$ integers  all have strongly admissible representations.    This yields the following theorem, whose proof is omitted.

\begin{theorem}\label{T:twingroup} For a twin prime pair $(p,q)$, $p>3$, the largest non-genus of ${\mathbb Z}_{pq}$ is
$$\nu_{pq} = \fr(0, p-1, 1, 0) - pq+1 = \gc +1+d_2 - pq+1.$$
\end{theorem}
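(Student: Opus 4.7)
The plan is to prove the theorem in two parts. Set $N := \fr(0, p-1, 1, 0)$, which equals $\gc + 1 + d_2$. Part~1 shows that $N$ has no admissible representation, so $\gamma = N - pq + 1$ is a non-genus. Part~2 shows that every integer in $[N+1, N + d_1]$ has a strongly admissible representation (in the sense of the proof of Theorem~\ref{T:cyclic}, i.e.\ admissible with $y + w \geq 2$); by translating the second coordinate through nonnegative multiples of $d_1$, this extends to every integer exceeding $N$, so $N - pq + 1$ is the largest non-genus.

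For Part~1, I would apply Proposition~\ref{P:system} with $(x_0, y_0, z_0, w_0) = (0, p-1, 1, 0)$, using the twin-prime specializations $\kappa = \kappa' = 1$, $\lambda = 2$, $\lambda' = 1$, $q = p + 2$, $q' = p' + 1$. The resulting two equations can be solved parametrically, exactly as in the derivation of \eqref{E:g1rep}: every integer representation of $N$ has the form $x = tp' - l$, $y = \nu p + p - 1 - l$, $z = 1 + l + \mu$, $w = 2l - \mu p$ with $\nu = \mu - t$ and $l, t, \mu, \nu \in {\Bbb Z}$. Nonnegativity of $x, z, w$ gives $t \geq \lceil l/p' \rceil$ and $\mu \leq \lfloor 2l/p \rfloor$, so using $2p' = p - 1$,
\[
\nu = \mu - t \leq \frac{2l}{p} - \frac{l}{p'} = \frac{l(2p' - p)}{pp'} = \frac{-l}{pp'}.
\]
This is strictly negative for $l \geq 1$, contradicting the requirement (from $y \geq 0$) that $\nu \geq 0$ when $l \leq p - 1$ and $\nu \geq 1$ when $l \geq p$. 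Hence the only solution is $l = t = \mu = \nu = 0$, giving back the quadruple $(0, p-1, 1, 0)$; since $z + w = 1$ there, it is inadmissible.

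For Part~2, I would start from $N + 1 = \fr(0, p-2, 2, 0)$, obtained using \eqref{E:facttwin}; this is strongly admissible for $p > 3$ since $y + w = p - 2 > 1$. For $t \in [1, d_1 - 1]$, the division-algorithm expansion $t = aq' + b + c$ ($a$ maximal, $b + c \leq p'$) of Proposition~\ref{P:twinrep} gives $N + 1 + t = \fr(c, p - 2 - a - b, 2 + b, a - 2c)$, and the parameter adjustments in that proof (taking $c = 0$ when $a \leq p'$, and choosing $(b, c)$ appropriately when $a > p'$) produce a nonnegative representation. Strong admissibility then follows: $x + y + w = p - 2 - b - c > 0$ because $b + c \leq p' < p - 2$ for $p > 3$; $x + z + w$ and $z + w$ are automatically $\geq 2$; and $y + w = p - 2 - b - 2c \geq 2$ holds except in boundary cases $y + w \in \{0, 1\}$, i.e.\ $b + 2c \in \{p - 2, p - 3\}$. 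In each such bad case, the resulting quadruple (e.g.\ $(p'-1, 0, 3, 0)$ when $y + w = 0$, or $(c, 1, p - 1 - 2c, 0)$ when $y + w = 1$) is replaced by an alternate strongly admissible representation obtained by adding a specific integer combination of $e_0, e_1, e_2$; for example, $e_2 - 2e_1$ takes $(p'-1, 0, 3, 0)$ to $(0, 1, 1, p-2)$. The main obstacle is this boundary-case verification: although only finitely many quadruple shapes fail strong admissibility, each requires an explicit $\fr$-preserving correction, analogous to the case analysis in the proof of Theorem~\ref{T:cyclic}.
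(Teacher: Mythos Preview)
Your two-part plan matches the paper's own (omitted) argument exactly: the paper states that $\fr(0,p-1,1,0)$ has no other nonnegative representation, and that the next $d_1$ integers all have strongly admissible representations by an adaptation of Proposition~\ref{P:twinrep}. Your Part~1 is correct; the parametrization and the bound $\nu\leq -l/(pp')$ for $l\geq 1$ are right, and the residual case $l=0$ (which you summarize rather than spell out) is routine.

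There is one small gap in Part~2. You assert that the parameter adjustments of Proposition~\ref{P:twinrep} ``produce a nonnegative representation'' for $N+1+t$, but your base point has second coordinate $p-2$ rather than $p-1$. Consequently, when $a\leq p'$ and you absorb $c$ into $b$, the resulting $y=p-2-a-b$ can equal $-1$: this happens precisely when $a=p'$ and $b+c=p'$. So nonnegativity is not inherited automatically; you must instead keep, say, $c=1$, $b=p'-1$, giving $(1,0,p'+1,p'-2)$, and then feed this into your boundary-case machinery (it has $y+w=p'-2$, which fails strong admissibility for $p=5,7$). In other words, the set of quadruples needing an explicit $\fr$-preserving correction is slightly larger than just the $y+w\in\{0,1\}$ cases you list: a handful of shapes near $a=p'$ need separate treatment as well. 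None of this affects the overall strategy, which is sound and is exactly what the paper has in mind.
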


\noindent  Hence for twin prime pairs, $\nu_{pq}$ is about midway between the bounds of \eqref{gpqbound}.

\end{document}